\newtheorem{theorem}{Theorem}[section]
\newtheorem{lemma}[theorem]{Lemma}
\newtheorem{proposition}[theorem]{Proposition}
\newtheorem{corollary}[theorem]{Corollary}
\newtheorem{notation}[theorem]{Notation}
\newtheorem{definition}{Definition}
\newtheorem{construction}[definition]{Construction}
\providecommand{\Ext}{\mathop{\rm Ext}\nolimits}
\providecommand{\Hom}{\mathop{\rm Hom}\nolimits}
\providecommand{\Tor}{\mathop{\rm Tor}\nolimits}
\newcommand{\cat}[1]{\mathcal{#1}}           
\newcommand{\cha}[1]{\textnormal{Ch}(\mathcal{#1})}
\newcommand{\tensor}{\otimes}
\newcommand{\class}[1]{\mathcal{#1}}   
\newcommand{\Z}{\mathbb{Z}}
\newcommand{\Q}{\mathbb{Q/Z}}
\newcommand{\ch}{\textnormal{Ch}(R)}
\newcommand{\tilclass}[1]{\widetilde{\class{#1}}}
\newcommand{\dwclass}[1]{dw\widetilde{\class{#1}}}
\newcommand{\exclass}[1]{ex\widetilde{\class{#1}}}
\newcommand{\dgclass}[1]{dg\widetilde{\class{#1}}}
\newcommand{\rightperp}[1]{#1^{\perp}}
\newcommand{\leftperp}[1]{{}^\perp #1}
\def\Qco{\mathfrak{Qco}}
\begin{document}

\title{Hereditary abelian model categories}

\author{James Gillespie}
\address{Ramapo College of New Jersey \\
         School of Theoretical and Applied Science \\
         505 Ramapo Valley Road \\
         Mahwah, NJ 07430}
\email[Jim Gillespie]{jgillesp@ramapo.edu}
\urladdr{http://pages.ramapo.edu/~jgillesp/}

\date{\today}

\begin{abstract}
We discuss some recent developments in the theory of abelian model categories. The emphasis is on the hereditary condition and applications to homotopy categories of chain complexes and stable module categories.
\end{abstract}

\maketitle

This paper is a survey of recent advances in the theory of abelian model categories, a tool of increasing importance in modern homological algebra. We give an updated account of the essentials of the theory and describe some recent exciting applications. These include their connection to triangulated categories, applications to homotopy categories of chain complexes and stable module categories, and recollement of triangulated categories.

Model categories were introduced by Quillen in~\cite{quillen-model categoires} and are most easily motivated by their utility in solving a localization problem for categories. Given a category $\cat{C}$ we may wish to treat the morphisms in a particular class $\class{W}$ as isomorphisms. For instance, $\cat{C}$ may be topological spaces and $\class{W}$ may be the class of weak homotopy equivalences, or, $\cat{C}$ may be a category of chain complexes and $\class{W}$ the class of homology isomorphisms. One may formally invert the morphisms of $\class{W}$ to obtain a potential category $\cat{C}[\class{W}^{-1}]$. But doing this leaves a category that is unwieldy and difficult to understand, for the morphisms are formal ``zig-zags'' of morphisms in $\cat{C}$. Quillen's theory solves this problem in the following way. By making some extra assumptions on the category $\cat{C}$, which are compatible with the class $\class{W}$ of \emph{weak equivalences}, we can introduce a formal relation on certain morphisms of $\cat{C}$, called the \emph{homotopy relation}. This leads to the construction of a new category $\textnormal{Ho}(\cat{C})$, the \emph{homotopy category} of $\cat{C}$. The fundamental result is a canonical equivalence $\cat{C}[\class{W}^{-1}] \cong \textnormal{Ho}(\cat{C})$. In particular, the morphism classes consisting of the cumbersome zig-zags are isomorphic to morphism \emph{sets} (so they are not just proper classes) in $\cat{C}$, modulo the formal homotopy relation.

The theory of abelian model categories concerns the case of when $\cat{C}$ is an abelian category and there is a model structure on $\cat{C}$ that is compatible with the abelian structure. They were introduced by Hovey, in~\cite{hovey}, and the cornerstone of the theory is \emph{Hovey's correspondence}: A one-to-one correspondence between abelian model structures and certain (pairs of) cotorsion pairs. Since model categories were invented and are mainly used by topologists, this correspondence serves as a translator between ideas rooted in topology and ideas rooted in algebra. More precisely, the translation is between ideas from homotopy theory and ideas from homological algebra.

Hovey showed that in the case of abelian model categories, the class $\class{W}$ of weak equivalences is completely determined by a class of objects, called the \emph{trivial objects}; see Proposition~\ref{prop-characterization of weak equivs}. Under a mild hypothesis, called the \emph{hereditary} condition, the homotopy category $\textnormal{Ho}(\cat{C})$ will also inherit the structure of a triangulated category. In this case we may think of $\textnormal{Ho}(\cat{C})$ as the triangulated category obtained from killing the objects in $\class{W}$. The hereditary condition is present in virtually every abelian model structure encountered in practice. This paper focuses on constructing triangulated categories by way of building hereditary model structures. New methods have made such constructions quite easy, and we illustrate this by generating several models that have recently appeared in exciting applications. For instance, Stovicek recently used abelian model categories to show that $K(Inj)$, the chain homotopy category of all chain complexes of injective $R$-modules, is a compactly generated triangulated category whenever $R$ is a coherent ring~\cite{stovicek-purity}. In~\cite{bravo-gillespie-hovey}, abelian model categories were used to introduce the stable module category of a general ring. And in~\cite{becker,gillespie-recollements}, there appeared an elegant connection between abelian model categories and recollement of triangulated categories.

Ideally, the reader will have some familiarity with homological algebra but perhaps is not so familiar with model categories. On the other hand, folks with a general interest in algebraic topology may be interested to see an updated account on the current state of abelian model categories. In any case, the intention is to give the general reader an introduction to the subject with complete references for further study.

\section{Notation and Conventions}\label{sec-notation}

Throughout the paper $R$ denotes a ring. We do not assume it is commutative, and unless stated otherwise, all modules are assumed to be left $R$-modules. We will use the following notation throughout the paper. One can refer to a standard reference such as~\cite{weibel} for further details on $K(R)$ and $\class{D}(R)$.

\

\noindent $R$-Mod denotes the category of (left) $R$-Modules.

\

\noindent $\ch$ denotes the category of chain complexes of $R$-modules. Its objects are chain complexes $\cdots
\xrightarrow{} X_{n+1} \xrightarrow{d_{n+1}} X_{n} \xrightarrow{d_n}
X_{n-1} \xrightarrow{} \cdots$ and its morphisms are chain maps.

\

\noindent $K(R)$ denotes the homotopy category of chain complexes. Its objects are also chain complexes but its morphisms are homotopy classes of chain maps.

\

\noindent $\class{D}(R)$ denotes the derived category of $R$. It is the Verdier quotient of $K(R)$ by the class $\class{E}$ of all exact (acyclic) chain complexes.

\

\noindent $S^n(M)$ is the \emph{$n$-sphere} on a given $R$-module $M$. This is the chain complex consisting only of $M$ in degree $n$ and 0 elsewhere.

\

\noindent $D^n(M)$ is the \emph{$n$-disk} on a given $R$-module $M$. It is the chain complex consisting only of $M \xrightarrow{1_M} M$ concentrated in degrees $n$ and $n-1$, and 0 elsewhere.

\

\noindent $\Sigma^n X$ is the \emph{$n^{\text{th}}$ suspension} of a given chain complex $X$. It is the chain complex given by $(\Sigma^n X)_{k} = X_{k-n}$ and $(d_{\Sigma^n X})_{k} = (-1)^nd_{k-n}$.

\section{Fundamentals of abelian model categories}\label{section-fundamentals}

Abelian model categories were introduced by Mark Hovey in~\cite{hovey} as abelian categories possessing a compatible model structure in the sense of Quillen~\cite{quillen-model categoires}. The central result is now known as \emph{Hovey's correspondence}: A correspondence between complete cotorsion pairs and abelian model structures. In this section we describe this correspondence and some first fundamentals of abelian model categories.  Further theory will be developed in Section~\ref{sec-hereditary abelian models} after two motivating examples are given in Section~\ref{sec-motivation examples}. Throughout this section, $\cat{A}$ denotes an abelian category.

\subsection{Complete cotorsion pairs}\label{sec-cotorsion pairs} Cotorsion pairs were introduced by Luigi Salce in~\cite{salce} in the context of abelian groups. We think of a cotorsion pair in $\cat{A}$ as a pair of classes that are orthogonal with respect to $\Ext^1_{\cat{A}}(-,-)$. Regardless of whether or not $\cat{A}$ has enough projectives or injectives, recall that the functors $\Ext^n_{\cat{A}}$ can be defined as equivalence classes of exact sequences, and made into an abelian group under the Baer sum~\cite[Vista~3.4.6]{weibel}. In particular, $\Ext^1_{\cat{A}}(X,Y) = 0$ if and only if every short exact sequence $0 \xrightarrow{} Y \xrightarrow{} Z \xrightarrow{} X \xrightarrow{} 0$ splits.

\begin{definition}
A \textbf{cotorsion pair} in $\cat{A}$ is a pair of classes $(\class{X},\class{Y})$ of objects in $\cat{A}$ satisfying the following two conditions:
\begin{enumerate}
    \item $X \in \class{X}$ iff $\Ext^1_{\cat{A}}(X,Y) = 0$ for all $Y \in
    \class{Y}$.

    \item $Y \in \class{Y}$ iff $\Ext^1_{\cat{A}}(X,Y) = 0$ for all $X \in
    \class{X}$.
\end{enumerate}
\end{definition}

The cotorsion pair is called \textbf{complete} if for any $A \in \cat{A}$ there exists a short exact sequence $0 \xrightarrow{} Y \xrightarrow{} X \xrightarrow{} A \xrightarrow{}0$ with $X \in \class{X}$ and $Y \in \class{Y}$, and another short exact sequence $0\xrightarrow{} A \xrightarrow{} Y' \xrightarrow{}X'\xrightarrow{}0$ with $X' \in \class{X}$ and $Y' \in \class{Y}$. The first sequence generalizes the concept of having enough projectives while the second generalizes the concept of having enough injectives. Indeed letting $\class{I}$ and $\class{P}$ denote the class of injective and projective objects, we have the \textbf{categorical cotorsion pairs} $(\class{A},\class{I})$ and $(\class{P},\class{A})$. The former is complete precisely when $\cat{A}$ has enough injectives and the second is complete when $\cat{A}$ has enough projectives.
The canonical nontrivial example of a complete cotorsion pair is Ed Enochs' flat cotorsion pair $(\class{F},\class{C})$, in the category of modules over a ring $R$. Here $\class{F}$ is the class of \emph{flat modules}; that is, those modules $F$ for which $- \otimes_R F$ is an exact functor. The modules in $\class{C}$ are called \emph{cotorsion modules}. A standard reference for all of this and related concepts is the book~\cite{enochs-jenda-book}.

A well known and powerful method for constructing complete cotorsion pairs is to ``cogenerate'' one from a set. Given any set $\class{S}$ of objects in $\cat{A}$, we let $\rightperp{\class{S}}$ be the class of all objects $Y$ such that $\Ext^1_{\cat{A}}(S,Y) = 0$ for all $S \in \class{S}$. A cotorsion pair $(\class{X},\class{Y})$ is said to be \textbf{cogenerated by a set} if there exists a set $\class{S}$ such that $\rightperp{\class{S}} = \class{Y}$.  In practice, complete cotorsion pairs almost always arise as an application of the following theorem.  Recall that a \textbf{Grothendieck category} is an abelian category possessing a generator and such that direct limits of short exact sequences are again short exact sequences. For example, $R$-Mod is a Grothendieck category. Viewing $R$ as a (left) module over itself provides a generator for $R$-Mod; that is, every module $M$ is an epimorphic image of a direct sum of copies of $R$.

\begin{theorem}\label{them-cogen-by-a-set}
Let $\cat{A}$ be a Grothendieck category and let $\class{S}$ be any set containing a generator. Let $\class{Y} = \rightperp{\class{S}}$. Then the cotorsion pair cogenerated by $\class{S}$, that is $(\leftperp{\class{Y}},\class{Y})$, is a complete cotorsion pair.
\end{theorem}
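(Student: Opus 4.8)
This is the Eklof--Trlifaj theorem, and the plan is to run its standard transfinite ``small object'' argument, paying attention to where being a Grothendieck category (rather than $R$-Mod) is used. Write $\class{X} = \leftperp{\class{Y}}$, so that $\class{X} = \leftperp{(\rightperp{\class{S}})}$ and in particular $\class{S}\subseteq\class{X}$. I will repeatedly use three closure properties of $\class{X}$, all coming from elementary properties of $\Ext^1_{\cat{A}}(-,-)$: since $\Ext^1_{\cat{A}}(-,Y)$ sends short exact sequences to exact sequences and (small) coproducts to products, $\class{X}$ is closed under extensions and under coproducts; and by Eklof's Lemma $\class{X}$ is closed under transfinite extensions, i.e.\ if $X = \bigcup_{\beta<\lambda}X_\beta$ for a continuous chain $0 = X_0\subseteq X_1\subseteq\cdots$ with each $X_{\beta+1}/X_\beta\in\class{X}$, then $X\in\class{X}$. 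It is also immediate from the definitions that $(\class{X},\class{Y})$ is indeed a cotorsion pair (condition (1) is the definition of $\leftperp{\class{Y}}$, and condition (2) holds because $\class{S}\subseteq\class{X}$), so the only thing to prove is completeness: for every object $A$ one must produce the two approximation sequences.

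First I would construct, for arbitrary $A$, a short exact sequence $0\to A\to W\to W/A\to 0$ with $W\in\class{Y}$ and $W/A\in\class{X}$ (a ``special $\class{Y}$-preenvelope''). Build a continuous chain $A = W_0\subseteq W_1\subseteq\cdots$ by transfinite recursion: given $W_\beta$, put $C_\beta = \bigoplus_{S\in\class{S}}S^{(\Ext^1_{\cat{A}}(S,W_\beta))}$, which is a \emph{set}-indexed coproduct because $\Ext$-groups in a Grothendieck category are small, and let $0\to W_\beta\to W_{\beta+1}\to C_\beta\to 0$ be the extension classified by the ``diagonal'' element under $\Ext^1_{\cat{A}}(C_\beta,W_\beta)\cong\prod_{S\in\class{S}}\big(\Ext^1_{\cat{A}}(S,W_\beta)\big)^{\Ext^1_{\cat{A}}(S,W_\beta)}$; take unions at limits and set $W = W_\kappa$ for a suitable regular cardinal $\kappa$. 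Then $W/A$ carries the continuous filtration $\{W_\beta/A\}_{\beta\le\kappa}$ with successive quotients $C_\beta$, each a coproduct of objects of $\class{S}\subseteq\class{X}$, so $W/A\in\class{X}$. The point of the choice of $C_\beta$ and of the diagonal class is that it makes the connecting map $\Hom_{\cat{A}}(S,C_\beta)\to\Ext^1_{\cat{A}}(S,W_\beta)$ surjective for every $S\in\class{S}$; by the $\Hom$--$\Ext$ long exact sequence this is equivalent to saying that the map $\Ext^1_{\cat{A}}(S,W_\beta)\to\Ext^1_{\cat{A}}(S,W_{\beta+1})$ induced by the inclusion is zero. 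Granting (see the last paragraph) that $\kappa$ can be chosen so that $\Ext^1_{\cat{A}}(S,W_\kappa) = 0$ for all $S\in\class{S}$, this yields $W\in\rightperp{\class{S}} = \class{Y}$ --- the second required sequence.

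The first sequence, $0\to W'\to Q\to A\to 0$ with $W'\in\class{Y}$ and $Q\in\class{X}$ (a ``special $\class{X}$-precover''), I would obtain by a pushout, and this is where the hypothesis that $\class{S}$ contains a generator $G$ is essential. Since $G\in\class{S}\subseteq\class{X}$ and $\class{X}$ is closed under coproducts, a coproduct $P = G^{(I)}$ admitting an epimorphism $P\twoheadrightarrow A$ --- which exists because $G$ generates $\cat{A}$ --- lies in $\class{X}$; let $K$ be the kernel, so $0\to K\to P\to A\to 0$. Apply the preceding step to $K$: $0\to K\to W'\to W'/K\to 0$ with $W'\in\class{Y}$ and $W'/K\in\class{X}$. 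Pushing out $P\twoheadrightarrow A$ along $K\hookrightarrow W'$ produces a commutative diagram one of whose rows is the short exact sequence $0\to W'\to Q\to A\to 0$ (with $W'\in\class{Y}$), while a column reads $0\to P\to Q\to W'/K\to 0$; since $P$ and $W'/K$ lie in $\class{X}$ and $\class{X}$ is closed under extensions, $Q\in\class{X}$. This is the first required sequence, so $(\class{X},\class{Y})$ is complete.

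The crux --- and essentially the only point beyond exactness of coproducts where ``Grothendieck'' enters --- is the termination claim that $\kappa$ can be chosen with $\Ext^1_{\cat{A}}(S,W_\kappa) = 0$ for all $S\in\class{S}$. A Grothendieck category is locally presentable, so there is a regular cardinal $\kappa$ such that every $S\in\class{S}$ is $\kappa$-presentable (it is here that $\class{S}$ being a \emph{set} is used). Since $\kappa$ is regular, the chain $(W_\beta)_{\beta<\kappa}$ is $\kappa$-filtered with colimit $W_\kappa$, and one checks that the canonical map $\colim_{\beta<\kappa}\Ext^1_{\cat{A}}(S,W_\beta)\to\Ext^1_{\cat{A}}(S,W_\kappa)$ is then surjective for every $S\in\class{S}$. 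Consequently, any class in $\Ext^1_{\cat{A}}(S,W_\kappa)$ is the image of some $\eta\in\Ext^1_{\cat{A}}(S,W_\beta)$ with $\beta<\kappa$; but $\eta$ already maps to $0$ in $\Ext^1_{\cat{A}}(S,W_{\beta+1})$ by construction, hence to $0$ in $\Ext^1_{\cat{A}}(S,W_\kappa)$, so $\Ext^1_{\cat{A}}(S,W_\kappa) = 0$. Pinning down this colimit/smallness estimate cleanly --- together with the routine checks that the transfinite pushouts and unions behave as claimed --- is the technical heart of the proof; it is precisely the content of the Eklof--Trlifaj construction, which one may instead simply cite.
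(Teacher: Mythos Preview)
The paper does not actually prove this theorem; being a survey, it only records the statement and cites the literature --- Eklof--Trlifaj for modules, Saor\'{\i}n--\v{S}\v{t}ov\'{\i}\v{c}ek for the Grothendieck-category version, and Hovey for the identification with Quillen's small object argument. Your sketch \emph{is} that small object argument: the transfinite tower of universal extensions to produce a special $\class{Y}$-preenvelope, termination via $\kappa$-presentability of the objects of $\class{S}$, and then Salce's pushout trick (using that $\class{S}$ contains a generator) to obtain the special $\class{X}$-precover. So there is nothing to compare --- you have written out precisely the proof the paper elected to cite rather than reproduce, and your closing remark that one ``may instead simply cite'' the Eklof--Trlifaj construction is exactly what the paper does. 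The one place to be careful if you ever write this out in full is the surjectivity of $\colim_{\beta<\kappa}\Ext^1_{\cat{A}}(S,W_\beta)\to\Ext^1_{\cat{A}}(S,W_\kappa)$ for a $\kappa$-filtered chain of monomorphisms; you correctly flag this as the technical heart, and it is handled carefully in the Saor\'{\i}n--\v{S}\v{t}ov\'{\i}\v{c}ek reference the paper cites.
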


Theorem~\ref{them-cogen-by-a-set} received a lot of attention after it was proved in the context of $R$-modules by Eklof and Trlifaj~\cite[Them.~10]{eklof-trlifaj}, for Enochs then used it to settle his \emph{flat cover conjecture}. It states that any $R$-module has a flat cover, a notion which can be thought of as dual to an injective hull. The version we give above is from~\cite[Section~2]{saorin-stovicek}. The essential ideas even go back to Quillen, and are an instance of his \emph{small object argument}. This was made clear by Hovey in~\cite[Them.~6.5]{hovey}, and by Saorin-Stovicek in~\cite[Corollary~2.14]{saorin-stovicek}. In Hovey's correspondence between cotorsion pairs and abelian model structures, which we describe next, cotorsion pairs that are cogenerated by a set correspond to \emph{cofibrantly generated model structures}. See~\cite[Section~2.1]{hovey-model-categories}, \cite[Section~6]{hovey}, and~\cite[Section~2]{saorin-stovicek}.

\subsection{Abelian model categories and Hovey's correspondence}

Apart from the algebraic notion of a cotorsion pair, there is the topological notion of a model category introduced by Quillen in~\cite{quillen-model categoires}.  Simply put, a model category is a category $\mathcal{M}$ along with three distinguished subclasses of maps called \emph{cofibrations}, \emph{fibrations} and \emph{weak equivalences}, all of which satisfy several axioms. The point of the axioms is to introduce a homotopy theory into the category. The standard references include~\cite{quillen-model categoires,dwyer-spalinski,hovey-model-categories,hirschhorn-model-categories}. We will not repeat here the (long) definition of a model category, simply because we won't need it. Indeed the success of Hovey's correspondence is that it translates these axioms into much simpler statements about cotorsion pairs. In particular, there is even no harm if the reader wishes to take the \emph{definition} of an abelian model category to be a \emph{Hovey triple} as in Theorem~\ref{them-hovey-correspondence}.

Hovey observed that the model structures appearing on abelian categories in nature satisfy some compatibilities between the abelian structure and the model structure. This of course is something that we expect in mathematics whenever we merge two ideas. To explain in this case, assume an abelian category $\cat{A}$ has a model structure in the sense of~\cite[Definition~1.1.3]{hovey-model-categories} and let $X \in \cat{A}$ be an object. We say that $X$ is \textbf{trivial} if $0 \xrightarrow{} X$ is a weak equivalence; we think of these objects as the ones to be ``killed'', or forced to be zero. We say $X$ is \textbf{(trivially) cofibrant} if $0 \xrightarrow{} X$ is a (trivial) cofibration. We say that $X$ is \textbf{(trivially) fibrant} if $X \xrightarrow{} 0$ is a (trivial) fibration. With this language, for a model structure on an abelian category to qualify as abelian, we require that the two structures be compatible in the following sense.

\begin{definition}\label{def-abelian}
Let $\cat{A}$ be a bicomplete abelian category with a model structure in the sense of Definition~1.1.3 of~\cite{hovey-model-categories}. We call it an \textbf{abelian model structure} if each of the following holds.
\begin{enumerate}
\item A morphism $f$ is a (trivial) cofibration if and only if it is a monomorphism with (trivially) cofibrant cokernel.
\item A morphism $g$ is a (trivial) fibration if and only if it is an epimorphism with (trivially) fibrant kernel.
\end{enumerate}
\end{definition}

As shown in~\cite{hovey}, this definition is stronger than it needs to be. However, the above definition is equivalent and makes it clear that we can now shift all our attention  from morphisms (cofibrations, weak equivalences, and fibrations) to objects (cofibrant, trivial, and fibrant). This is the key to the relative simplicity of abelian model categories as opposed to general model categories. This brings us to Hovey's correspondence, which essentially says that an abelian model structure is just three classes of objects, the cofibrant, trivial, and fibrant objects, all satisfying some properties. To state the correspondence, call a class of objects $\class{W}$ \textbf{thick} if it is closed under direct summands and satisfies that whenever two out of three terms in a short exact sequence are in $\class{W}$, then so is the third.

\begin{theorem}[Hovey's Correspondence {\cite[Theorem~2.2]{hovey}}]\label{them-hovey-correspondence}
Let $\class{A}$ be an abelian category with an abelian model structure. Let the triple $(\class{Q},\class{W},\class{R})$ denote respectively the classes $\class{Q}$ of cofibrant objects, $\class{W}$ of trivial objects, and $\class{R}$ of fibrant objects. Then $\class{W}$ is a thick class in $\class{A}$ and both $(\class{Q},\class{W} \cap \class{R})$ and $(\class{Q} \cap \class{W} , \class{R})$ are complete cotorsion pairs in $\cat{A}$. Conversely, given a thick class $\class{W}$ and classes $\class{Q}$ and $\class{R}$ making $(\class{Q},\class{W} \cap \class{R})$ and $(\class{Q} \cap \class{W} , \class{R})$ each complete cotorsion pairs, then there is an abelian model structure on $\cat{A}$ where $\class{Q}$ are the cofibrant objects, $\class{R}$ are the fibrant objects and $\class{W}$ are the trivial objects.
\end{theorem}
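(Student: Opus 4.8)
The plan is to prove the two implications separately, with a single technical dictionary powering both. That dictionary is the following lemma, which I would establish first: for any class $\class{D}$ of objects of $\cat{A}$, a morphism $f$ has the left lifting property against every epimorphism with kernel in $\class{D}$ if and only if $f$ is a monomorphism with $\cok f \in \leftperp{\class{D}}$; dually, $f$ has the right lifting property against every monomorphism with cokernel in $\class{D}$ if and only if $f$ is an epimorphism with $\ker f \in \rightperp{\class{D}}$. For the ``if'' direction one pulls the short exact sequence defining a given epimorphism $X \twoheadrightarrow Y$ (kernel $K \in \class{D}$) back along the bottom of a lifting square; the resulting sequence $0 \to K \to P \to B \to 0$ comes with a section over the subobject $A$, and the obstruction to extending that section over all of $B$ is a class in $\Ext^1_{\cat{A}}(\cok f, K)$, which vanishes because $\cok f \in \leftperp{\class{D}}$. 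The ``only if'' direction tests the lifting property against the epimorphisms coming from extensions by objects of $\class{D}$.

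For the forward implication, I would start from an abelian model structure with classes $\class{Q}$ (cofibrant), $\class{W}$ (trivial), $\class{R}$ (fibrant). Since the cofibrations and trivial fibrations of any model category form a weak factorization system, a map is a cofibration precisely when it lifts on the left against all trivial fibrations; by Definition~\ref{def-abelian} the trivial fibrations are exactly the epimorphisms with kernel in $\class{W}\cap\class{R}$, so the lemma identifies the cofibrations with the monomorphisms whose cokernel lies in $\leftperp{(\class{W}\cap\class{R})}$. Comparing with the intrinsic description ``monomorphism with cokernel in $\class{Q}$'' gives $\class{Q} = \leftperp{(\class{W}\cap\class{R})}$, and the symmetric computation gives $\class{W}\cap\class{R} = \rightperp{\class{Q}}$; so $(\class{Q},\class{W}\cap\class{R})$ is a cotorsion pair, and applying the factorization axiom (cofibration followed by trivial fibration) to the maps $0 \to A$ and $A \to 0$ produces the two short exact sequences witnessing completeness. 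Running the same argument with cofibrations/trivial cofibrations against fibrations yields the complete cotorsion pair $(\class{Q}\cap\class{W},\class{R})$. Finally $\class{W}$ is thick: it is retract-closed because the weak equivalences are and a retract of objects gives a retract of the maps $0 \to -$; and it satisfies two-out-of-three for short exact sequences $0 \to A \to B \to C \to 0$ because this reduces, via the key fact that \emph{a monomorphism is a weak equivalence iff its cokernel is trivial}, to two-out-of-three for the composable maps $0 \to A \hookrightarrow B$. That key fact I would get by factoring the monomorphism as a trivial cofibration followed by a fibration, noting the fibration is a weak equivalence (hence trivial) by two-out-of-three, and comparing cokernels with the snake lemma.

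For the converse, given a thick $\class{W}$ and complete cotorsion pairs $(\class{Q},\class{W}\cap\class{R})$ and $(\class{Q}\cap\class{W},\class{R})$, I would declare a map to be a \emph{cofibration} (resp.\ \emph{trivial cofibration}) if it is a monomorphism with cokernel in $\class{Q}$ (resp.\ $\class{Q}\cap\class{W}$), a \emph{fibration} (resp.\ \emph{trivial fibration}) if it is an epimorphism with kernel in $\class{R}$ (resp.\ $\class{W}\cap\class{R}$), and a \emph{weak equivalence} if it factors as a trivial cofibration followed by a trivial fibration. Via the lemma, the cotorsion pair $(\class{Q},\class{W}\cap\class{R})$ makes (cofibrations, trivial fibrations) a pair of classes mutually determined by lifting, and completeness of that pair gives the corresponding factorizations; likewise $(\class{Q}\cap\class{W},\class{R})$ does this for (trivial cofibrations, fibrations). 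So one has two interlocking weak factorization systems, and a standard characterization of model structures then assembles them into a model structure on $\cat{A}$ once one knows that the weak equivalences satisfy two-out-of-three (retract closure of weak equivalences being a formal consequence of that together with the two weak factorization systems). That the resulting model structure is abelian in the sense of Definition~\ref{def-abelian}, and that its cofibrant, trivial, and fibrant objects are exactly $\class{Q}$, $\class{W}$, $\class{R}$, is then a matter of unwinding definitions --- for instance $0 \to X$ is a weak equivalence iff $X$ is a quotient of a trivial object by a trivial subobject, hence, by thickness, iff $X \in \class{W}$.

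The step I expect to be the real obstacle --- in both directions, but most visibly in the converse --- is establishing two-out-of-three for the class of weak equivalences, equivalently the ``monomorphism is a weak equivalence iff its cokernel is trivial'' fact together with its epimorphism dual. This is not formal: it uses the thickness of $\class{W}$ and the completeness of \emph{both} cotorsion pairs in an essential way. The argument I would run takes a composite of two weak equivalences (or a composite together with one factor), uses completeness to upgrade the given monomorphism--epimorphism factorizations to factorizations through objects in the controlled classes $\class{Q}\cap\class{W}$ and $\class{W}\cap\class{R}$, builds a large commutative diagram of short exact sequences out of these data, and then propagates triviality around the diagram by repeated use of the snake lemma and the two-out-of-three property of the thick class $\class{W}$ on objects --- it is precisely thickness that guarantees the auxiliary kernels and cokernels produced along the way stay in $\class{W}$. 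Once this lemma is in place, the lifting/$\Ext$ dictionary, the passage from completeness to factorizations, and the verification of the abelian compatibility are all routine.
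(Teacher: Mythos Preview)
The paper does not give its own proof of this theorem: it is a survey, and the statement is simply attributed to \cite[Theorem~2.2]{hovey} with no argument supplied. So there is nothing in the paper to compare your proposal against. For what it is worth, your outline is essentially the standard route taken in Hovey's original paper --- the $\Ext$/lifting dictionary, the identification of the two weak factorization systems with the two cotorsion pairs, and the observation that verifying two-out-of-three for weak equivalences is where the real work lies --- so had the paper included a proof, yours would have matched it in spirit.
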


Hovey's correspondence makes it clear that an abelian model structure can be succinctly represented by a triple $\class{M} = (\class{Q},\class{W},\class{R})$. By a slight abuse of language we often refer to such a triple as an \emph{abelian model structure}. Alternately, we often call $\class{M}$ a \textbf{Hovey triple}.

\subsection{Characterization of weak equivalences and trivial objects}\label{sec-trivial objects}

The weak equivalences are the most important class of morphisms in a model category, for they determine its homotopy category, as explained below in the Fundamental Theorem~\ref{them-fundamental}. However,  Theorem~\ref{them-hovey-correspondence}/Definition~\ref{def-abelian} do not explicitly characterize the weak equivalences. As it turns out, applying the so called \emph{two out of three axiom} for model categories, it is easy to see that a map is a weak equivalence if and only if it factors as a trivial cofibration followed by a trivial fibration. In fact we can do better than that. The following important characterization is proved in~\cite[Lemma~5.8]{hovey}.

\begin{proposition}\label{prop-characterization of weak equivs}
Let $\class{M} = (\class{Q}, \class{W}, \class{R})$ be a Hovey triple in $\class{A}$. Then a morphism is a weak equivalence if and only if it factors as a monomorphism with trivial cokernel followed by an epimorphism with trivial kernel.
\end{proposition}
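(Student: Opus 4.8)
The plan is to leverage the description of weak equivalences already announced in the text: a map is a weak equivalence precisely when it factors as a trivial cofibration followed by a trivial fibration. Combined with Definition~\ref{def-abelian}, a trivial cofibration is a monomorphism with cokernel in $\class{Q}\cap\class{W}$ and a trivial fibration is an epimorphism with kernel in $\class{W}\cap\class{R}$, so the ``if'' direction of a weaker statement is essentially immediate. The content of the Proposition is to remove the cofibrancy/fibrancy from the cokernel and kernel: we must show that \emph{any} factorization as a monomorphism with trivial cokernel followed by an epimorphism with trivial kernel already witnesses a weak equivalence. So I would organize the argument in two halves: first, that a monomorphism with trivial cokernel is a weak equivalence, and dually that an epimorphism with trivial kernel is a weak equivalence; then the general case follows by the two-out-of-three axiom applied to the composite.

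For the first half, let $f\colon A\rightarrowtail B$ be a monomorphism with $\cok f = W \in \class{W}$. Using completeness of the cotorsion pair $(\class{Q}\cap\class{W},\class{R})$, I would take an approximation sequence $0\to W\to R\to Q'\to 0$ — or rather the other special precover/preenvelope sequence — to ``resolve'' the situation; the key idea is that since $\class{W}$ is thick and contains $W$, one can build a commutative diagram relating $f$ to a genuine trivial cofibration. Concretely: form the pushout of $A\rightarrowtail B$ along a suitable monomorphism into an object with cofibrant-trivial cokernel, or use that $0\to A\to A\to 0$ and $0\to B\to B\to 0$ together with the snake lemma let one compare kernels and cokernels. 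The cleanest route is to factor $f$ through the model-categorical factorization into a trivial cofibration $i$ followed by a fibration $p$, note $p$ is an epimorphism (being a fibration) whose kernel $K$ fits into a short exact sequence with two terms in $\class{W}$ (the cokernel of $i$ and the cokernel of $f$, via the snake lemma), hence $K\in\class{W}$ by thickness, so $p$ is a trivial fibration and $f=pi$ is a weak equivalence. The dual argument handles epimorphisms with trivial kernel using the factorization into a cofibration followed by a trivial fibration.

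For the converse: given an arbitrary weak equivalence $g$, factor it (by the model axioms) as a trivial cofibration followed by a trivial fibration; by Definition~\ref{def-abelian} this is exactly a monomorphism with trivial cokernel followed by an epimorphism with trivial kernel, which is the desired form (even with the stronger cofibrant/fibrant conclusions). Then for the forward direction, one feeds a factorization of the stated type into two-out-of-three: each factor is a weak equivalence by the two halves above, so the composite is. The main obstacle I anticipate is the snake-lemma bookkeeping in the first half — verifying that the kernel $K$ of the fibration $p$ genuinely sits in a short exact sequence whose other two terms lie in $\class{W}$, which requires carefully tracking the induced maps among $\cok f$, $\cok i$, and $\ker p$ and invoking that $\class{W}$ is thick. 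Once that diagram chase is pinned down, the rest is formal manipulation with the two-out-of-three axiom and Definition~\ref{def-abelian}.
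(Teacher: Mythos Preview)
Your argument is correct, but note that the paper does not actually prove this proposition: it only cites \cite[Lemma~5.8]{hovey}. Your sketch is essentially a reconstruction of Hovey's argument there. The snake-lemma bookkeeping you flag as the main worry works out exactly as you anticipate: from $f=pi$ with $i$ a trivial cofibration and $p$ a fibration, applying the snake lemma to the map of short exact sequences
\[
\begin{CD}
0 @>>> A @>i>> C @>>> \cok i @>>> 0 \\
@. @| @VVpV @VV\bar{p}V \\
0 @>>> A @>f>> B @>>> \cok f @>>> 0
\end{CD}
\]
yields $\ker p \cong \ker\bar{p}$ and $\bar{p}$ epi, hence a short exact sequence $0\to\ker p\to\cok i\to\cok f\to 0$. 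Since $\cok i\in\class{Q}\cap\class{W}\subseteq\class{W}$ and $\cok f\in\class{W}$, thickness gives $\ker p\in\class{W}$, so $p$ is a trivial fibration and $f$ is a weak equivalence. The dual is identical, and the rest is two-out-of-three as you say.
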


So just as the property of being a cofibration (or resp. fibration) is completely determined by the class $\class{Q}$ (resp. $\class{R}$), Proposition~\ref{prop-characterization of trivial objects} tells us that the weak equivalences are completely determined by $\class{W}$. This is consistent with what we already stated above: In abelian model categories, all of the defining concepts are moved from morphisms to objects, making them much easier than general model categories. Furthermore, we have the following characterization of the trivial objects.

\begin{proposition}\label{prop-characterization of trivial objects}
Let $\class{M} = (\class{Q}, \class{W}, \class{R})$ be a Hovey triple in $\class{A}$.
Then the thick class $\class{W}$ is characterized in each of the two following ways:
  \begin{align*}
   \class{W}  &= \{\, A \in \class{A} \, | \, \exists \, \text{s.e.s.} \ 0 \xrightarrow{} A \xrightarrow{} V \xrightarrow{} W \xrightarrow{} 0 \, \text{ with} \, V \in \class{W} \cap \class{R} \, , W \in \class{Q} \cap \class{W} \,\} \\
           &= \{\, A \in \class{A} \, | \, \exists \, \text{s.e.s.} \ 0 \xrightarrow{} V' \xrightarrow{} W' \xrightarrow{} A \xrightarrow{} 0 \, \text{ with} \,  V' \in \class{W} \cap \class{R} \, , W' \in \class{Q} \cap \class{W} \,\}.
          \end{align*}
Consequently, $\class{W}$ is unique in the sense that whenever $(\class{Q}, \class{V}, \class{R})$ is a Hovey triple, then necessarily $\class{V} = \class{W}$.
\end{proposition}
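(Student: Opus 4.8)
The plan is to derive the two displayed equalities directly from the completeness of the cotorsion pairs $(\class{Q},\class{W}\cap\class{R})$ and $(\class{Q}\cap\class{W},\class{R})$ guaranteed by Theorem~\ref{them-hovey-correspondence}, together with the thickness of $\class{W}$, and then to obtain the uniqueness assertion as a formal consequence of the first equality.

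For the first equality, let $\class{W}'$ denote the class on the right-hand side. If $A\in\class{W}'$, choose a short exact sequence $0\to A\to V\to W\to 0$ with $V\in\class{W}\cap\class{R}$ and $W\in\class{Q}\cap\class{W}$; then $V$ and $W$ both lie in $\class{W}$, so thickness (two out of the three terms are in $\class{W}$) forces $A\in\class{W}$, giving $\class{W}'\subseteq\class{W}$. Conversely, suppose $A\in\class{W}$. Applying the half of the completeness of $(\class{Q}\cap\class{W},\class{R})$ that generalizes having enough injectives, we obtain a short exact sequence $0\to A\to R\to Q'\to 0$ with $R\in\class{R}$ and $Q'\in\class{Q}\cap\class{W}$. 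Since $A$ and $Q'$ both lie in $\class{W}$ and $\class{W}$ is thick, $R\in\class{W}$, hence $R\in\class{W}\cap\class{R}$, and this sequence exhibits $A\in\class{W}'$. The second equality is proved in the mirror-image way: the inclusion $\supseteq$ is again immediate from thickness, and for $\subseteq$ one applies to $A\in\class{W}$ the ``enough projectives'' half of the completeness of $(\class{Q},\class{W}\cap\class{R})$, obtaining $0\to W''\to Q\to A\to 0$ with $Q\in\class{Q}$ and $W''\in\class{W}\cap\class{R}$, and then uses thickness ($A,W''\in\class{W}$) to upgrade $Q$ to a member of $\class{Q}\cap\class{W}$.

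For the uniqueness statement, suppose $(\class{Q},\class{V},\class{R})$ is another Hovey triple, so that $(\class{Q},\class{V}\cap\class{R})$ and $(\class{Q}\cap\class{V},\class{R})$ are (complete) cotorsion pairs. In any cotorsion pair each of the two classes is the full $\Ext^1$-orthogonal complement of the other, so a cotorsion pair is determined by either of its classes alone. Hence $(\class{Q},\class{W}\cap\class{R})$ and $(\class{Q},\class{V}\cap\class{R})$, having the same left-hand class $\class{Q}$, satisfy $\class{W}\cap\class{R}=\rightperp{\class{Q}}=\class{V}\cap\class{R}$; and $(\class{Q}\cap\class{W},\class{R})$ and $(\class{Q}\cap\class{V},\class{R})$, having the same right-hand class $\class{R}$, satisfy $\class{Q}\cap\class{W}=\leftperp{\class{R}}=\class{Q}\cap\class{V}$. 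But the first displayed characterization describes $\class{W}$ using only the classes $\class{W}\cap\class{R}$ and $\class{Q}\cap\class{W}$, while the same characterization applied to the triple $(\class{Q},\class{V},\class{R})$ describes $\class{V}$ using $\class{V}\cap\class{R}$ and $\class{Q}\cap\class{V}$. Since these pairs of classes coincide, $\class{W}=\class{V}$.

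The argument is mostly bookkeeping. The only delicate point is to select, in each of the two reverse inclusions, the correct one of the two short exact sequences provided by completeness (the ``enough injectives'' sequence in one case, the ``enough projectives'' sequence in the other), and to keep track of which terms of each short exact sequence the thickness axiom is being applied to; I do not expect a genuine obstacle beyond this.
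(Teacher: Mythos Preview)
Your proof is correct and is exactly the straightforward argument the paper has in mind; the paper itself does not spell out a proof but refers to \cite[Proposition~3.2]{gillespie-recollements}, calling it ``easy'', and your use of thickness plus the appropriate half of completeness of each cotorsion pair, followed by the observation that $\class{W}\cap\class{R}=\rightperp{\class{Q}}$ and $\class{Q}\cap\class{W}=\leftperp{\class{R}}$ depend only on $\class{Q}$ and $\class{R}$, is precisely that argument.
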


The easy proof of the above proposition can be found in~\cite[Proposition~3.2]{gillespie-recollements}. Note that it implies $\class{W}$ is the smallest thick class containing both the trivially cofibrant and trivially fibrant objects.

\subsection{Cofibrant and fibrant replacement} \label{sec-replacements}

Hovey's Correspondence Theorem~\ref{them-hovey-correspondence} provides a method to build a dictionary between model category ideas and properties of cotorsion pairs.
Now all model categories come with the fundamental notion of cofibrant replacement and fibrant replacement. We now interpret these for abelian model categories.

The idea of cofibrant replacement is to replace an object $X$ by a cofibrant object $Q$, which is isomorphic to $X$ in the homotopy category $\textnormal{Ho}(\class{M})$. Formally, a \textbf{cofibrant replacement} of $X$ is any factorization of the map $0 \xrightarrow{} X$, where $0$ is the initial object,  as a cofibration $0 \xrightarrow{} Q$ followed by a trivial fibration $Q \xrightarrow{p} X$. In particular, $Q$ is cofibrant and $p$ is a weak equivalence.
On the other hand, a \textbf{fibrant replacement} of $X$ is a factorization of the map $X \xrightarrow{} *$, where $*$ is the terminal object, as a trivial cofibration $X \xrightarrow{i} R$ followed by a fibration $R \xrightarrow{} *$. Hence $R$ is fibrant and $i$ is a weak equivalence. Note that any abelian category is pointed in the sense that $0 \xrightarrow{} *$ is an isomorphism; we just have a zero object $0$. The following lemma now becomes obvious and provides the standard mechanism for taking cofibrant and fibrant replacements in abelian model categories.

\begin{lemma}[Cofibrant and fibrant replacement]\label{lemma-cof-replacements}
Let $\class{M} = (\class{Q}, \class{W}, \class{R})$ be a Hovey triple in $\class{A}$ and let $X \in \cat{A}$ be any object.
\begin{enumerate}
\item Using completeness of the cotorsion pair $(\class{Q},\class{W} \cap \class{R})$, write a short exact sequence $0 \xrightarrow{} W \xrightarrow{} QX \xrightarrow{p_X} X \xrightarrow{} 0$ with $QX \in\class{Q}$ and $W \in \class{W} \cap \class{R}$. Then $QX$ is a cofibrant replacement of $X$ in $\class{M}$.
\item Using completeness of the cotorsion pair  $(\class{Q} \cap \class{W}, \class{R})$, write a short exact sequence  $0 \xrightarrow{} X \xrightarrow{i_X} RX \xrightarrow{} W \xrightarrow{} 0$ with $RX \in\class{R}$ and $W \in \class{Q} \cap \class{W}$. Then $RX$ is a fibrant replacement of $X$ in $\class{M}$.
\end{enumerate}
\end{lemma}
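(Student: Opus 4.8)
The statement is, as its preamble indicates, essentially an unwinding of Definition~\ref{def-abelian} once one knows that $\class{W} \cap \class{R}$ is precisely the class of trivially fibrant objects and $\class{Q} \cap \class{W}$ precisely the class of trivially cofibrant objects. I will spell out part~(1); part~(2) is formally dual. Recall that, by the hypothesis that $\class{M} = (\class{Q},\class{W},\class{R})$ is a Hovey triple and Theorem~\ref{them-hovey-correspondence}, we genuinely have an abelian model structure on $\cat{A}$ whose cofibrant, trivial, and fibrant objects are exactly $\class{Q}$, $\class{W}$, and $\class{R}$, and in which the characterizations in Definition~\ref{def-abelian} hold.

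First I would verify that $0 \xrightarrow{} QX$ is a cofibration. It is a monomorphism (any map out of the zero object is), and its cokernel is $QX \in \class{Q}$; since $\class{Q}$ is the class of cofibrant objects, Definition~\ref{def-abelian}(1) says that a monomorphism with cokernel in $\class{Q}$ is a cofibration. Next I would verify that $p_X \colon QX \xrightarrow{} X$ is a trivial fibration. It is an epimorphism, being the surjection in the given short exact sequence, and its kernel is $W \in \class{W} \cap \class{R}$. The one point to pin down is that $\class{W} \cap \class{R}$ is exactly the class of trivially fibrant objects: for an object $W$, the map $W \xrightarrow{} 0$ is a fibration if and only if $W \in \class{R}$, while, comparing the composite $0 \xrightarrow{} W \xrightarrow{} 0$ with the identity of $0$ via the two-out-of-three axiom, $W \xrightarrow{} 0$ is a weak equivalence if and only if $0 \xrightarrow{} W$ is, i.e.\ if and only if $W \in \class{W}$; hence $W \xrightarrow{} 0$ is a trivial fibration exactly when $W \in \class{W} \cap \class{R}$. (Equivalently, $\class{W} \cap \class{R}$ is the right-hand class of the cotorsion pair $(\class{Q}, \class{W} \cap \class{R})$ furnished by Theorem~\ref{them-hovey-correspondence}, which is by construction the class of trivially fibrant objects.) Thus $p_X$ is an epimorphism with trivially fibrant kernel, so by Definition~\ref{def-abelian}(2) it is a trivial fibration, hence in particular a weak equivalence. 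Since the composite $0 \xrightarrow{} QX \xrightarrow{p_X} X$ is the (unique) map $0 \xrightarrow{} X$, this exhibits $QX \in \class{Q}$ as a cofibrant replacement of $X$.

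For part~(2) the argument is dual: $RX \xrightarrow{} 0$ is an epimorphism whose kernel $RX$ lies in $\class{R}$, so it is a fibration by Definition~\ref{def-abelian}(2); and $i_X \colon X \xrightarrow{} RX$ is a monomorphism whose cokernel is $W \in \class{Q} \cap \class{W}$, which — dually to the above, and as the left-hand class of the cotorsion pair $(\class{Q} \cap \class{W}, \class{R})$ from Theorem~\ref{them-hovey-correspondence} — is precisely the class of trivially cofibrant objects, so $i_X$ is a trivial cofibration by Definition~\ref{def-abelian}(1). Hence $X \xrightarrow{i_X} RX \xrightarrow{} 0$ exhibits $RX$ as a fibrant replacement of $X$.

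The only real content, and hence the only place one could call an obstacle, is the identification of the intersection classes $\class{W} \cap \class{R}$ and $\class{Q} \cap \class{W}$ with the trivially fibrant and trivially cofibrant objects respectively; this is already encoded in Hovey's correspondence, and once it is granted the lemma is immediate from Definition~\ref{def-abelian} --- which is why, as the text says, it is ``obvious''.
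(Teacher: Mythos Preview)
Your proposal is correct and matches the paper's approach exactly: the paper does not give a proof at all, stating only that the lemma ``becomes obvious'' from the preceding definitions, and what you have written is precisely the unpacking of Definition~\ref{def-abelian} and Theorem~\ref{them-hovey-correspondence} that the paper leaves implicit. Your identification of $\class{W}\cap\class{R}$ (resp.\ $\class{Q}\cap\class{W}$) with the trivially fibrant (resp.\ trivially cofibrant) objects is the only point requiring any care, and you handle it correctly.
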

Finally, given $A \in \cat{A}$, by a \textbf{bifibrant replacement} of $X$ we mean $RQX$.

\subsection{The homotopy category of an abelian model category} \label{sec-fundamental theorem}

We end this section by describing Quillen's classical result, on how model categories are used to represent localization categories $\cat{C}[\class{W}^{-1}]$. Every standard reference on model categories contains this material. So we keep this brief, emphasizing what the results says for \emph{abelian} model categories. The proof will just provide references to the literature for full details.

First, let $\class{W}$ be a class of morphisms in a category $\cat{C}$. One can formally invert the morphisms of $\class{W}$ to get a ``category'' $\cat{C}[\class{W}^{-1}]$, where the morphisms of $\class{W}$ have been forced to become isomorphisms. See~\cite[Def.~1.2.1]{hovey-model-categories}. One obtains a canonical functor $\gamma : \cat{C} \xrightarrow{} \cat{C}[\class{W}^{-1}]$ which is universally initial with respect to the property of inverting the morphisms of $\class{W}$. But this ``category'' may not be a category at all, in the sense that morphism sets are not guaranteed to be actual sets, just proper classes. The standard result is that if $\class{W}$ is the class of weak equivalences in a model structure on $\cat{C}$, then $\cat{C}[\class{W}^{-1}]$ is indeed a category and can be represented (i.e., ``modeled'') via the model structure. The essentials are summarized in the following fundamental theorem.

\begin{theorem}[The Fundamental Theorem of Model Categories]\label{them-fundamental}
Suppose $\class{M} = (\class{Q},\class{W}, \class{R})$ is an abelian model category. Then there is a category, $\textnormal{Ho}(\class{M})$, called the \textbf{homotopy category of $\class{M}$}, whose objects are the same as those of $\cat{A}$ but whose morphisms are given by $$\Hom_{\textnormal{Ho}(\class{M})}(X,Y) = \Hom_{\cat{A}}(RQX,RQY)/\sim.$$
Here, $RQX$ is a chosen bifibrant replacement of $X$ as in Lemma~\ref{lemma-cof-replacements}, and $f \sim g$ if and only if $g-f$ factors through an object of the core $\class{Q}\cap\class{W}\cap \class{R}$. Moreover, we have:
\begin{enumerate}
\item The inclusion $\class{Q} \cap \class{R} \hookrightarrow \cat{A}$ induces an equivalence of categories $$(\class{Q} \cap \class{R})/\sim \ \hookrightarrow \textnormal{Ho}(\class{M}),$$ where again $f \sim g$ if and only if $g-f$ factors through an object of the core $\class{Q}\cap\class{W}\cap \class{R}$. Its inverse is obtained by taking bifibrant replacements.

\item There is a functor $\gamma : \cat{A} \xrightarrow{} \textnormal{Ho}(\class{M})$ which is the identity on objects and bifibrant replacement on morphisms. It is a localization of $\cat{A}$ with respect to $\class{W}$, and hence there is a canonical equivalence of categories $\cat{C}[\class{W}^{-1}] \cong \textnormal{Ho}(\class{M})$.

\item For any choice of cofibrant replacement $QX$ and fibrant replacement $RY$, we have $\Hom_{\text{Ho}(\cat{A})}(X,Y) \cong \Hom_{\cat{A}}(QX,RY)/\sim$.
\end{enumerate}
\end{theorem}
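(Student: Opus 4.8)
The plan is to treat this as a packaging exercise: the Fundamental Theorem is well documented for general model categories, so the real work is to translate the general statements into the abelian language afforded by Hovey's correspondence, and to verify that the left/right homotopy relations collapse to the stated ``factors through the core'' relation. First I would recall the classical setup from a standard reference such as \cite[Chapter~1]{hovey-model-categories} or \cite{dwyer-spalinski}: for any model category one defines left and right homotopy on maps between objects that are cofibrant (resp.\ fibrant), proves these coincide and are equivalence relations on $\Hom(X,Y)$ when $X$ is cofibrant and $Y$ is fibrant, and then defines $\Ho(\class{M})$ to have the same objects as $\class{M}$ with $\Hom_{\Ho(\class{M})}(X,Y) = \Hom(RQX,RQY)/\!\sim$. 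The functor $\gamma$ is the identity on objects and sends a map to the homotopy class of any lift to bifibrant replacements; that $\gamma$ is a localization at $\class{W}$, and that $(\class{Q}\cap\class{R})/\!\sim\ \hookrightarrow \Ho(\class{M})$ is an equivalence with inverse given by bifibrant replacement, are exactly \cite[Theorem~1.2.10]{hovey-model-categories} and its corollaries. So items (1) and (2) are direct citations once the abelian dictionary is in place, and item (3) follows from (1) together with the observation that $QX$ is cofibrant and $RY$ is fibrant, so $\Hom(QX,RY)/\!\sim$ already computes the hom-set in $\Ho(\class{M})$ without needing a full bifibrant replacement.

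The one genuinely abelian point to nail down is the description of $\sim$. Here I would invoke the abelian structure: in an abelian model category a cylinder object for a cofibrant $X$ is a short exact sequence $0 \to X \oplus X \to \operatorname{Cyl}(X) \to W \to 0$ with $W \in \class{Q}\cap\class{W}$ (dually a path object sits in $0 \to W' \to \operatorname{Path}(Y) \to Y\oplus Y \to 0$ with $W' \in \class{W}\cap\class{R}$). Chasing the definition of left homotopy through such a cylinder, one finds that $f \simeq^{\ell} g$ means $g - f \colon X \to Y$ factors through an object of $\class{Q}\cap\class{W}$; dually a right homotopy means $g-f$ factors through an object of $\class{W}\cap\class{R}$. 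When $X$ is cofibrant and $Y$ is fibrant both notions agree, and for bifibrant $X,Y$ one can arrange the intermediate object to lie in $\class{Q}\cap\class{W}\cap\class{R}$ (the core): given a factorization through $W\in\class{Q}\cap\class{W}$, apply a fibrant replacement to $W$ inside $\class{Q}\cap\class{W}$ and push the factorization through, using that $X$ is cofibrant and $Y$ is fibrant to lift and extend the relevant maps. This identification of $\sim$ with ``factors through the core'' is really the only non-formal step; it is carried out in \cite[Section~4]{hovey} and I would follow that, or alternatively cite \cite[Proposition~4.4 and Corollary~4.8]{hovey} directly.

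With the dictionary established, I would assemble the proof in the following order: (i) state the general Fundamental Theorem and $\gamma$ from \cite[Theorem~1.2.10]{hovey-model-categories}; (ii) insert the abelian description of cofibrant/fibrant replacement from Lemma~\ref{lemma-cof-replacements}, so that $RQX$ is concretely the object built by applying the two complete cotorsion pairs in succession; (iii) identify left, right, and general homotopy on bifibrant objects with the ``factors through $\class{Q}\cap\class{W}\cap\class{R}$'' relation via the cylinder/path-object computation above; (iv) deduce items (1) and (2) as the abelian shadows of the general statements; and (v) obtain item (3) from (1) by noting $\Hom_{\cat{A}}(QX,RY)/\!\sim\ \cong \Hom_{\cat{A}}(RQX,RQY)/\!\sim$, since the maps $QX \to RQX$ and $RY$ versus $RQY$ are trivial cofibrations/fibrations between objects that are already cofibrant or fibrant and hence induce bijections on homotopy classes. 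The main obstacle, as indicated, is step (iii): one must be careful that the two-sided homotopy relation on bifibrant objects can always be witnessed by an object of the \emph{core} rather than merely of $\class{Q}\cap\class{W}$ or $\class{W}\cap\class{R}$, and that this relation is genuinely an equivalence relation compatible with composition; everything else is either a citation or a formal consequence.
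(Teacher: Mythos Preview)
Your proposal is correct and matches the paper's approach almost exactly: the paper's ``proof'' is purely a list of references, citing \cite{quillen-model categoires}, \cite[Sections~5--6]{dwyer-spalinski}, and \cite[Theorem~1.2.10]{hovey-model-categories} for the general model-categorical content, and singling out the characterization of $\sim$ as the only genuinely abelian ingredient. The one small discrepancy is attribution: the paper credits the ``factors through the core'' description of the homotopy relation to \cite[Prop.~4.4]{gillespie-exact model structures} rather than to \cite{hovey}, so you may want to adjust your citation for step~(iii).
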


\noindent \emph{References for proof}: Again, this goes back to~\cite{quillen-model categoires} and can be found in most standard references on model categories. The key difference in our statement above is the given characterization of the homotopy relation. As stated, it is only valid for \emph{abelian} model categories, and this was proved in~\cite[Prop.~4.4]{gillespie-exact model structures}. The particular method used for defining the homotopy category in the theorem is based on~\cite[Definition~5.6/5.7]{dwyer-spalinski}. The reader will find detailed proofs of the above statements in~\cite[Sections~5 and~6]{dwyer-spalinski}. Another good alternative is to consult~\cite[Theorem~1.2.10]{hovey-model-categories}.

\section{Two motivating examples}\label{sec-motivation examples}

In this section we describe the main motivating examples of abelian model structures. They are the two standard models on $\ch$ for the derived category, and two models on $R$-Mod for the stable module category of a Gorenstein ring $R$.

\subsection{Models for the derived category of a ring} \label{sec-derived-models}
We can think of $\class{D}(R)$, the derived category of $R$,  as the triangulated category obtained from $\ch$ by killing the exact (or acyclic) chain complexes. By a fundamental result of homological algebra, the class  $\class{W}$ of all exact complexes is indeed thick. So to model $\class{D}(R)$, one seeks an abelian model structure on $\ch$ with $\class{W}$ as the class of trivial objects. The most common solutions to this are the \textbf{injective model structure} and the \textbf{projective model structure} on $\ch$.  The injective model structure was written down in~\cite[Section~2.3]{hovey-model-categories} before the correspondence between abelian model structures and cotorsion pairs was realized. From the cotorsion pair perspective, it can easily be represented by the Hovey triple $\class{M}_{inj} = (\class{A}, \class{W},\class{I})$. Here $\class{A}$ denotes the class of all chain complexes while $\class{I}$ is the class of all DG-injective complexes~\cite{avramov-foxby-halperin}. A chain complex $I$ is \emph{DG-injective} if each component $I_n$ is injective and all chain maps $E \xrightarrow{} I$ are null homotopic whenever $E$ is an exact complex. From~\cite{enochs-jenda-xu-97} we see that  $(\class{W},\class{I})$ is a complete cotorsion pair with $\class{W} \cap \class{I}$ coinciding with the class of all categorically injective complexes. Hence $\class{M}_{inj}$ is a Hovey triple.

On the other hand, the projective model structure on $\ch$ is represented by the Hovey triple $\class{M}_{prj} = (\class{P}, \class{W},\class{A})$. Here $\class{P}$ is the class of all DG-projective chain complexes. They are the complexes $P$ with each component $P_n$ projective and all chain maps $P \xrightarrow{} E$ being null homotopic whenever $E$ is exact. For more details, we again refer to~\cite{avramov-foxby-halperin,enochs-jenda-xu-97,hovey-model-categories}. Also, the monograph~\cite{garcia-rozas} is a standard reference for DG-injective and DG-projective complexes.

It is enlightening to interpret the Fundamental Theorem~\ref{them-fundamental} with these model structures. Recall that $\Ext^n_R(M,N)$ is classically computed by taking either a projective resolution of $M$ or an injective resolution of $N$. Following the projective approach, take a projective resolution of $M$:
$$\cdots \xrightarrow{} P_2 \xrightarrow{} P_1 \xrightarrow{} P_0 \xrightarrow{\epsilon} M  \xrightarrow{}  0.$$
For brevity, we denote this by $P_* \xrightarrow{\epsilon} M \xrightarrow{} 0$, where $P_*$ is the complex $\cdots \xrightarrow{} P_2 \xrightarrow{} P_1 \xrightarrow{} P_0 \xrightarrow{}  0$. Then there is an obvious short exact sequence $0 \xrightarrow{} K \xrightarrow{}  P_* \xrightarrow{\epsilon} S^0(M) \xrightarrow{} 0$. Since $P_*$ is bounded below, one can prove by induction that it is a DG-projective complex. Moreover, $K$ is easily seen to be exact. Hence $P_*$ is a cofibrant replacement of $S^0(M)$ in $\class{M}_{prj}$, by Lemma~\ref{lemma-cof-replacements}. Now applying the Fundamental Theorem~\ref{them-fundamental} (iii), we get $$\textnormal{Ho}(\class{M}_{prj})(S^0(M),S^n(N)) \cong \ch(P_*,S^n(N))/\sim$$
and $\sim$ turns out to be the usual chain homotopy relation by Lemma~\ref{lemma-chain-homotopy}. On the other hand, $$\ch(P_*,S^n(N))/\sim \ \cong H^n[\Hom_R(P_*,N)] \cong \Ext^n_R(M,N).$$ We conclude $\textnormal{Ho}(\class{M}_{prj})(S^0(M),S^n(N)) \cong \Ext^n_R(M,N)$, and that the projective model structure formalizes the classical computation of $\Ext$ via projective resolutions. A similar argument applies to the injective model structure; it corresponds to the existence of injective resolutions for computing $\Ext^n_R(M,N)$.

\subsection{Models for the stable module category of a Gorenstein ring}\label{section-stable module cat}

Consider a field $k$ and a finite group $G$. Then $k[G]$ is a quasi-Frobenius ring, meaning it is both left and right Noetherian and the projective modules coincide with the injective modules. In representation theory, the stable module category St($k[G]$) naturally arises from $k[G]$-Mod by killing the projective-injective modules. Formally, the objects of St($k[G]$) are just $k[G]$-modules, but morphisms are identified by $f \sim g$ whenever $g-f$ factors through a projective-injective module.  Analogous to how we saw above that $\Ext$ groups live as morphism sets in the derived category, the Tate cohomology of $G$ lives here as morphism sets in St($k[G]$). Stable module theory goes back to~\cite{auslander-bridger} and has been studied by many authors. In particular, a classical abstract homological treatment due to Buchweitz can be found online; see~\cite{buchweitz}.

The definition of St($k[G]$) is reminiscent of the formal $\sim$ in the Fundamental Theorem~\ref{them-fundamental}. So it is not surprising that there is an abelian model structure on $k[G]$-Mod whose homotopy category is St($k[G]$). In fact, one sees immediately that there is a Hovey triple $(\class{A},\class{W},\class{A})$, where $\class{W}$ is the class of projective-injective modules, and $\cat{A}$ denotes the class of all $k[G]$-modules. Using abelian model categories, Hovey generalized this construction to Gorenstein rings $R$. Such rings include quasi-Frobenius rings, and as we now explain, the trivial objects are relaxed from the projective-injective modules to include all modules of finite projective/injective dimension. Recall that a module $M$ has finite injective dimension if it has a finite injective resolution $$0 \xrightarrow{} M \xrightarrow{} I^0 \xrightarrow{} I^1 \xrightarrow{} \cdots \xrightarrow{} I^n \xrightarrow{} 0,$$ and finite projective dimension is defined similarly.

\begin{definition}
A ring $R$ is called \textbf{Gorenstein} if it is both left and right Noetherian and has finite injective dimension as both a left and right module over itself.
\end{definition}

Such rings were introduced and studied in~\cite{iwanaga,iwanaga2}. While Gorenstein rings have long been important in commutative algebra, this definition extends the notion to non commutative rings. If $K$ is a commutative Gorenstein ring, then the group ring $K[G]$ is Gorenstein for any finite group $G$~\cite{eilenberg-nak}. Iwanaga's main result on Gorenstein rings is that if $R$ is Gorenstein with injective dimension $\textnormal{id}({}_RR) = n < \infty$, then the class of $R$-modules of finite injective dimension coincides with the class of $R$-modules of finite projective dimension. Furthermore, $n$ is an upper bound for any module of finite projective or injective dimension. See~\cite[Section~9.1]{enochs-jenda-book}.

Let $\class{W}$ denote the class of all modules of finite injective (and hence projective) dimension. In light of the above it is easily seen that $\class{W}$ is thick. We set $\class{GP} = \leftperp{\class{W}}$; the modules in $\class{GP}$ are called \emph{Gorenstein projective}. These modules have been studied by many people with much interest initially given by Enochs, Jenda, and Xu. On the other hand, set $\class{GI} = \rightperp{\class{W}}$. The modules in $\class{GI}$ are called \emph{Gorenstein injective}. The standard reference for Gorenstein injective and Gorenstein projective modules is~\cite[Chapter~10]{enochs-jenda-book}.  In the seminal paper~\cite{hovey}, Hovey defines the stable module category St($R$) of a Gorenstein ring, by putting two abelian model structures on $R$-Mod, each having $\class{W}$ as the class of trivial objects.

\begin{theorem}[\cite{hovey}, Theorem~2.6]\label{them-Goren-models}
Let $R$ be a Gorenstein ring. Then there are two abelian model structures on $R$-Mod as follows:
The injective model structure $\class{M}_{inj} = (All, \class{W},\class{GI})$ has all modules cofibrant and the fibrant objects are the Gorenstein injective modules. The projective model structure $\class{M}_{prj} = (\class{GP},\class{W}, All)$ has all modules fibrant and the Gorenstein projective modules are cofibrant. Since they share the same class of trivial objects, their homotopy categories are equivalent. We denote it by $\textnormal{St}(R)$ and call it the \textbf{stable module category of $R$}. Referring to the Fundamental Theorem~\ref{them-fundamental} we have equivalences $$\class{GP}/\sim \ \hookrightarrow  \textnormal{St}(R)  \hookleftarrow \class{GI}/\sim,$$
where $f \sim g$ if and only if $g-f$ factors through a module of finite projective/injective dimension.
\end{theorem}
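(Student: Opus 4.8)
The plan is to read off both model structures from Hovey's Correspondence (Theorem~\ref{them-hovey-correspondence}), so that everything reduces to exhibiting, for each of the two triples, the required pair of complete cotorsion pairs with the prescribed intersections. Write $\class{P}$ and $\class{I}$ for the classes of projective and injective $R$-modules and put $n=\textnormal{id}({}_RR)<\infty$. By Iwanaga's theorem, $\class{W}$ is simultaneously the class of modules of injective dimension $\le n$ and of projective dimension $\le n$, and, as noted above, it is thick. For the projective triple $(\class{GP},\class{W},\textnormal{All})$ one needs $(\class{GP},\class{W})$ and $(\class{GP}\cap\class{W},\textnormal{All})$ to be complete cotorsion pairs, and for the injective triple $(\textnormal{All},\class{W},\class{GI})$ one needs $(\class{W},\class{GI})$ and $(\textnormal{All},\class{W}\cap\class{GI})$. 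Since $R$-Mod has enough projectives and injectives, the categorical pairs $(\class{P},\textnormal{All})$ and $(\textnormal{All},\class{I})$ are complete, so it suffices to prove: (i) $(\class{GP},\class{W})$ is a complete cotorsion pair; (ii) $(\class{W},\class{GI})$ is a complete cotorsion pair; and (iii) $\class{GP}\cap\class{W}=\class{P}$ and $\class{W}\cap\class{GI}=\class{I}$.

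For (i) I would show that $\class{W}$ is cogenerated by a set and apply Theorem~\ref{them-cogen-by-a-set}. Because $R$ is left Noetherian, a module $N$ has $\textnormal{id}(N)\le n$ if and only if $\Ext^{n+1}_R(R/\ideal{a},N)=0$ for every left ideal $\ideal{a}$; rewriting this as $\Ext^1_R(\Omega^n(R/\ideal{a}),N)=0$ and combining with Iwanaga's bound shows that $\class{W}=\rightperp{\class{S}}$ for the set $\class{S}=\{R\}\cup\{\,\Omega^n(R/\ideal{a})\mid \ideal{a}\subseteq R\text{ a left ideal}\,\}$, which contains the generator $R$. Theorem~\ref{them-cogen-by-a-set} then gives that $(\leftperp{\class{W}},\class{W})=(\class{GP},\class{W})$ is a complete cotorsion pair. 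For (iii), dimension shifting does the job: if $M\in\class{GP}\cap\class{W}$, pick $0\to K\to P\to M\to 0$ with $P$ projective; then $\textnormal{pd}(K)<\infty$, so $K\in\class{W}$, and since $M\in\class{GP}=\leftperp{\class{W}}$ we get $\Ext^1_R(M,K)=0$, the sequence splits, and $M$ is projective. The reverse containment is clear, and $\class{W}\cap\class{GI}=\class{I}$ follows by the dual argument with injective coresolutions.

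The step I expect to be the main obstacle is (ii), because the small-object/Eklof--Trlifaj machinery behind Theorem~\ref{them-cogen-by-a-set} produces complete cotorsion pairs whose \emph{right} half is prescribed, whereas in $(\class{W},\class{GI})$ it is the \emph{left} half $\class{W}$ that is given. I would resolve this by duality: $R^{op}$ is again Gorenstein of the same dimension $n$, so applying (i) over $R^{op}$ yields a complete cotorsion pair of right $R$-modules, and the exact faithful character-module functor $(-)^{+}=\Hom_{\Z}(-,\mathbb{Q}/\mathbb{Z})$ transports it to $(\class{W},\class{GI})$ over $R$ by way of the standard Ext--Tor duality $\Ext^1_R(M,N^{+})\cong\Tor_1^{R}(N,M)^{+}$ together with the fact that over a Gorenstein ring finite projective, flat, and injective dimension all agree on the class $\class{W}$; completeness survives because $R$-Mod has enough injectives. (A more hands-on alternative is available: truncating an injective coresolution of an arbitrary module at stage $n$ and exploiting the uniform bound $\textnormal{pd}(W)\le n$ for $W\in\class{W}$ produces the short exact sequences witnessing both halves of completeness directly.)

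With (i)--(iii) and thickness of $\class{W}$ in hand, Theorem~\ref{them-hovey-correspondence} produces the two abelian model structures. They have the same class $\class{W}$ of trivial objects, hence by Proposition~\ref{prop-characterization of weak equivs} the same weak equivalences, so Theorem~\ref{them-fundamental}(ii) identifies both homotopy categories with the localization $R\text{-Mod}[\class{W}^{-1}]$; this is $\textnormal{St}(R)$. The displayed equivalences $\class{GP}/\!\sim\,\hookrightarrow\textnormal{St}(R)\hookleftarrow\class{GI}/\!\sim$ are the instances of Theorem~\ref{them-fundamental}(i) for the two triples, where $\class{Q}\cap\class{R}$ equals $\class{GP}$ and $\class{GI}$ respectively, with core $\class{P}$ in the first case and $\class{I}$ in the second. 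Finally, one last dimension-shifting argument---given a map $A\to B$ between Gorenstein projectives that factors through some $W\in\class{W}$, the map $A\to W$ lifts along an epimorphism $P\twoheadrightarrow W$ from a projective, since its kernel has finite projective dimension and $A\in\leftperp{\class{W}}$, so the original map factors through $P$; dually on the injective side---shows that factoring through the core is equivalent to factoring through all of $\class{W}$, which is the form of $\sim$ stated in the theorem.
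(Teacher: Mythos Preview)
The paper provides no proof of this theorem; it is stated with attribution to~\cite{hovey} and nothing more, so there is no in-paper argument to compare yours against. That said, your strategy---reduce via Hovey's correspondence to verifying (i)--(iii)---is exactly the right one and matches Hovey's own. Your treatments of (i), of (iii), and of the final paragraph on the homotopy relation are correct.

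Step (ii) is where the real work lies, as you correctly anticipate, and your proposal leaves a genuine gap there. The character-module route does not work as written: $(-)^{+}$ is not an equivalence of categories, and applying it twice lands you at $M^{++}$ rather than $M$, so approximation sequences for the right module $M^{+}$ over $R^{op}$ do not produce approximation sequences for the left module $M$ over $R$. The Ext--Tor isomorphism you quote governs the \emph{orthogonality} conditions, not the \emph{existence} of approximations; your claim that ``completeness survives because $R$-Mod has enough injectives'' is not justified.

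Your parenthetical hands-on alternative is the correct direction and is essentially what Hovey (following Enochs--Jenda, and ultimately Auslander--Buchweitz approximation theory) actually does, but it is incomplete as stated. Truncating an injective coresolution at stage $n$ shows only that the $n$-th cosyzygy $\Omega^{-n}M$ lies in $\class{GI}=\rightperp{\class{W}}$; it does not yet produce the required short exact sequence $0\to M\to G\to W\to 0$ with $G\in\class{GI}$ and $W\in\class{W}$. One must run a genuine downward induction: from such a sequence for $\Omega^{-(i+1)}M$ and the short exact sequence $0\to\Omega^{-i}M\to I^{i}\to\Omega^{-(i+1)}M\to 0$, build one for $\Omega^{-i}M$ by a pullback construction, using along the way that every $G\in\class{GI}$ fits into a short exact sequence $0\to G'\to J\to G\to 0$ with $J$ injective and $G'\in\class{GI}$. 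This last fact is itself a theorem (the equivalence of $\rightperp{\class{W}}$ with the ``complete injective resolution'' description of Gorenstein injectives) under the paper's definition $\class{GI}=\rightperp{\class{W}}$, and it too needs to be supplied. You should write this step out rather than leave it in parentheses.
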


\section{Hereditary abelian model categories}\label{sec-hereditary abelian models}

We now discuss what has turned out to be a crucial property of cotorsion pairs and abelian model categories, the \emph{hereditary} property. We see this property in virtually every cotorsion pair we encounter and it is connected to the theory of abelian model categories in at least three important ways. First, Theorem~\ref{them-how to construct triples} supplements Hovey's correspondence, making it easier than ever to construct abelian model structures from hereditary cotorsion pairs. Second, each abelian model category $\class{M}$ constructed in this way is stable; that is, its homotopy category $\text{Ho}(\class{M})$ is naturally equivalent to the stable category of a Frobenius category.  Finally, the hereditary hypothesis on cotorsion pairs has been crucial to the problem of lifting complete cotorsion pairs to models for the derived category. This last point will be discussed in Section~\ref{sec-chain-complexes}.

A cotorsion pair $(\class{X},\class{Y})$ is called \textbf{hereditary} if $\class{X}$ is closed under taking kernels of epimorphisms between objects in $\class{X}$ and $\class{Y}$ is closed under taking cokernels of monomorphisms between objects in $\class{Y}$. We also call an abelian model structure \emph{hereditary} when the two associated cotorsion pairs in Hovey's correspondence Theorem~\ref{them-hovey-correspondence} are hereditary. All of the cotorsion pairs encountered in this paper will be hereditary, reflecting that virtually all the cotorsion pairs and abelian model structures we encounter in practice tend to be hereditary.

\subsection{Construction of hereditary abelian model categories}
We now describe a theorem which provides a convenient method to quickly construct hereditary abelian model structures. Its utility will be illustrated throughout the rest of the paper, to give easy proofs of the existence of many model structures with triangulated homotopy categories.

Note that Hovey's correspondence is between triples $(\class{Q},\class{W},\class{R})$ with $\class{W}$ a thick class for which $(\class{Q} , \class{W} \cap \class{R})$ and $(\class{Q} \cap \class{W},\class{R})$ are complete cotorsion pairs. But it turns out that a thick class $\class{W}$ can be constructed from two complete cotorsion pairs in the hereditary case.  To state the result, given a cotorsion pair $(\class{X},\class{Y})$, call $\class{X} \cap \class{Y}$ the \textbf{core} of the cotorsion pair. Note that for a Hovey triple $(\class{Q},\class{W}, \class{R})$, the two associated cotorsion pairs $(\class{Q}, \tilclass{R}) = (\class{Q},\class{W} \cap \class{R})$ and $(\tilclass{Q}, \class{R}) = (\class{Q} \cap \class{W},\class{R})$ have the same core. This class, $\class{Q} \cap \class{W} \cap \class{R}$ is called the \emph{core} of the Hovey triple.

\begin{theorem}[\cite{gillespie-hovey triples}]\label{them-how to construct triples}
Let $(\class{Q}, \tilclass{R})$ and $(\tilclass{Q}, \class{R})$ be two complete hereditary cotorsion pairs with equal cores and with $\tilclass{R} \subseteq \class{R}$, or equivalently, $\tilclass{Q} \subseteq \class{Q}$. Then there is a unique thick class $\class{W}$ for which $(\class{Q},\class{W},\class{R})$ is a Hovey triple. It is the smallest thick class containing the trivially fibrant and trivially cofibrant objects and satisfies the following two characterizations:
\begin{align*}
   \class{W}  &= \{\, X \in \class{A} \, | \, \exists \, \text{s.e.s. } \, 0 \xrightarrow{} X \xrightarrow{} R \xrightarrow{} Q \xrightarrow{} 0 \, \text{ with} \, R \in \tilclass{R} \, , Q \in \tilclass{Q} \,\} \\
           &= \{\, X \in \class{A} \, | \, \exists \, \text{s.e.s. } \, 0 \xrightarrow{} R' \xrightarrow{} Q' \xrightarrow{} X \xrightarrow{} 0 \, \text{ with} \, R' \in \tilclass{R} \, , Q' \in \tilclass{Q} \,\}.
          \end{align*}
\end{theorem}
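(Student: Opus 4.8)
\emph{Proof strategy.} The plan is to take the first of the two displayed formulas as the \emph{definition} of a class $\class{W}$, prove it coincides with the second, prove it is thick, and then verify the two cotorsion-pair conditions; Hovey's correspondence (Theorem~\ref{them-hovey-correspondence}) and Proposition~\ref{prop-characterization of trivial objects} take care of the rest. Before anything else I would record the homological content of the hereditary hypothesis. By the usual dimension-shifting (long exact sequence) argument, $\Ext^i_{\cat{A}}(\class{Q},\tilclass{R}) = 0$ and $\Ext^i_{\cat{A}}(\tilclass{Q},\class{R}) = 0$ for all $i \ge 1$; moreover each of $\class{Q},\tilclass{R},\tilclass{Q},\class{R}$ is closed under extensions, $\class{Q}$ and $\tilclass{Q}$ are closed under kernels of epimorphisms between their own objects, and $\tilclass{R}$ and $\class{R}$ are closed under cokernels of monomorphisms between their own objects. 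Finally, the two pairs share a common core $\omega := \class{Q}\cap\tilclass{R} = \tilclass{Q}\cap\class{R}$; I will repeatedly use that an object already known to lie in, say, $\class{R}$ and also in $\tilclass{Q}$ must then lie in $\omega$, and in particular in $\tilclass{R}$.

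The crux --- and the step I expect to be the main obstacle --- is simultaneously establishing that the two displayed classes coincide and that the resulting class $\class{W}$ is thick. Call the two classes $\class{W}_1$ (first formula) and $\class{W}_2$ (second). To pass from a presentation $0 \to X \to R \to Q \to 0$ of the first kind to one $0 \to R' \to Q' \to X \to 0$ of the second kind, I would apply completeness of one of the two cotorsion pairs to $R$ and then form a pullback along $X \hookrightarrow R$ (dually, a pushout for the reverse implication); the hereditary closure properties and the core equality are exactly what is needed to recognize the auxiliary objects produced by completeness --- a priori only in $\class{R}$ or in $\class{Q}$ --- as lying in $\tilclass{R}$ or $\tilclass{Q}$. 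For thickness one must check closure under direct summands and the two-out-of-three property for short exact sequences; given $0 \to X' \to X \to X'' \to 0$ with two terms in $\class{W}$, the idea is to splice the defining short exact sequences of those two terms in a horseshoe-type diagram, using the higher $\Ext$-vanishing to make the requisite lifting maps exist and to keep the intermediate objects inside $\tilclass{R}$ and $\tilclass{Q}$, thereby reading off a presentation of the third term of one of the two required forms. Getting these diagram chases to close, and invoking heredity and the core equality at precisely the right places, is where the real work lies; the bulk of~\cite{gillespie-hovey triples} is devoted to exactly this.

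Granting the self-dual description of $\class{W}$ and its thickness, the remaining steps are routine. First, $\tilclass{R} \subseteq \class{W}$ (witnessed by $0 \to R \xrightarrow{1_R} R \to 0 \to 0$) and dually $\tilclass{Q} \subseteq \class{W}$, so, together with the hypotheses $\tilclass{R} \subseteq \class{R}$ and $\tilclass{Q} \subseteq \class{Q}$, one has $\tilclass{R} \subseteq \class{W}\cap\class{R}$ and $\tilclass{Q} \subseteq \class{Q}\cap\class{W}$. Conversely, if $X \in \class{W}\cap\class{R}$, write $X$ by the second formula as $0 \to R' \to Q' \to X \to 0$ with $R' \in \tilclass{R}$ and $Q' \in \tilclass{Q}$; since $R', X \in \class{R}$ and $\class{R}$ is closed under extensions, $Q' \in \class{R}$, hence $Q' \in \tilclass{Q}\cap\class{R} = \omega \subseteq \tilclass{R}$, so that $X$, being the cokernel of a monomorphism between objects of $\tilclass{R}$, lies in $\tilclass{R}$ by heredity. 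The dual argument, using the first formula, shows $\class{Q}\cap\class{W} \subseteq \tilclass{Q}$. Therefore $(\class{Q},\class{W}\cap\class{R}) = (\class{Q},\tilclass{R})$ and $(\class{Q}\cap\class{W},\class{R}) = (\tilclass{Q},\class{R})$ are the two given complete cotorsion pairs, so $(\class{Q},\class{W},\class{R})$ is a Hovey triple (equivalently, an abelian model structure, by Theorem~\ref{them-hovey-correspondence}). Uniqueness of $\class{W}$ among thick classes with this property, and the assertion that it is the smallest thick class containing $\tilclass{R}$ and $\tilclass{Q}$, then follow from Proposition~\ref{prop-characterization of trivial objects} together with the identifications just made.
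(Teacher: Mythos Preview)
The paper itself gives no proof beyond a citation to~\cite{gillespie-hovey triples}, so your outline is already more than the paper offers, and its overall architecture---define $\class{W}$ by one of the two formulas, prove $\class{W}_1=\class{W}_2$ and thickness, then read off $\class{W}\cap\class{R}=\tilclass{R}$ and $\class{Q}\cap\class{W}=\tilclass{Q}$, and finish with Theorem~\ref{them-hovey-correspondence} and Proposition~\ref{prop-characterization of trivial objects}---is exactly the route taken in that reference. Your ``routine'' steps are correct and cleanly argued.

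The one place where your sketch is too optimistic is the single-pullback argument for $\class{W}_1\subseteq\class{W}_2$. If you apply enough projectives of $(\tilclass{Q},\class{R})$ to $R$ to get $0\to R_1\to Q_1\to R\to 0$ and then pull back along $X\hookrightarrow R$, you do obtain $0\to R_1\to P\to X\to 0$ together with $0\to P\to Q_1\to Q\to 0$; heredity gives $P\in\tilclass{Q}$, and the core equality forces $Q_1\in\omega$. But $R_1$ is only known to lie in $\class{R}$: it is the kernel of an epimorphism between objects of $\tilclass{R}$ (namely $Q_1,R\in\tilclass{R}$), and the right class of an hereditary pair is closed under \emph{cokernels of monomorphisms}, not kernels of epimorphisms, so you cannot conclude $R_1\in\tilclass{R}$. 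Using $(\class{Q},\tilclass{R})$ instead produces the mirror defect: the kernel lands in $\tilclass{R}$ but the middle only in $\class{Q}$. In~\cite{gillespie-hovey triples} this is circumvented by first proving, via horseshoe-type splicings that use the higher $\Ext$-vanishing, separate two-out-of-three closure properties for $\class{W}_1$ and for $\class{W}_2$; the equality $\class{W}_1=\class{W}_2$ then drops out because each defining short exact sequence exhibits $X$ as a (co)kernel of a map between objects already in $\tilclass{R}\cup\tilclass{Q}\subseteq\class{W}_1\cap\class{W}_2$. So your identification of the crux and the tools involved is right; only the ``one pullback suffices'' shortcut is not.
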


\begin{proof}
A self contained elementary proof is detailed in~\cite{gillespie-hovey triples}.
\end{proof}

It took quite a while for Theorem~\ref{them-how to construct triples} to be found. This is because $\class{W}$ is the most important class in a model structure since,  according to Proposition~\ref{prop-characterization of weak equivs} and Theorem~\ref{them-fundamental}, it determines the homotopy category. Naturally, one would typically already have in mind what $\class{W}$ should be when constructing an abelian model structure. But Theorem~\ref{them-how to construct triples} turns this idea on its head. For example, it has been known for quite some time that we have a complete hereditary cotorsion pair $(\class{Q}, \tilclass{R})$ in $\ch$ where $\class{Q}$  is the class of all complexes with flat components, and another complete hereditary cotorsion pair $(\tilclass{Q}, \class{R})$ where $\tilclass{Q}$ is the class of all categorically flat chain complexes. They satisfy the hypotheses of Theorem~\ref{them-how to construct triples}. So this raises a question: What is the corresponding model structure actually ``modeling''? We see in Section~\ref{sec-coderived-models} that this is a model for Murfet's \emph{mock homotopy category of projectives}, from~\cite{murfet-thesis}.
Similarly, we can find model structures on the entire chain complex category, $\ch$, that model such categories as $K(Inj)$, the chain homotopy category of all complexes with injective components. The reader can skip right away  to Section~\ref{sec-chain-complexes} to learn more about these and other examples.

\subsection{The triangulated structure on $\textnormal{Ho}(\class{M})$}\label{sec-triangulated}

An important feature of hereditary abelian model structures is that $\textnormal{Ho}(\class{M})$ is always a triangulated category, in the sense of Verdier, in the case that $\class{M}$ is hereditary. Standard examples of triangulated categories include $K(R)$ and $\class{D}(R)$ but also St($R$), the stable module category of a Gorenstein ring (Section~\ref{section-stable module cat}). It is fair to think of triangulated categories as additive categories which are not quite abelian, but which have ``exact triangles'' substituting for short exact sequences. The standard references include~\cite{weibel,neeman-book}. A further discussion of triangulated categories would lead us astray. Therefore, the author assumes here that the reader has some familiarity with triangulated categories, but would like to learn the connection to hereditary abelian model structures.

The easiest example of a triangulated category is probably the stable category St($\cat{F}$) of a Frobenius category $\cat{F}$. This construction is due to Happel~\cite{happel-triangulated}.

\begin{definition}\label{def-Frobenius}
A \textbf{Frobenius category} is an exact category in the sense of~\cite{quillen-algebraic K-theory}, that has enough projectives and enough injectives, but such that the projective objects coincide with the injective objects.
\end{definition}

Two common examples of Frobenius categories are $k[G]$-Mod, where $k$ is a field and $G$ is a finite group, and $\ch_{dw}$, the category of chain complexes with the degreewise split short exact sequences.  Applying the following construction to these two examples lead, respectively, to St($k[G]$) from Section~\ref{section-stable module cat},  and to $K(R)$, the usual chain homotopy category of complexes.

\begin{construction}\label{construction-happel}
We now review Happel's construction of the triangulated category St($\cat{F}$), the \textbf{stable category} of a Frobenius category $\cat{F}$. We let $\omega$ denote the class of projective-injective objects. The objects of St($\cat{F}$) are the same as those in $\cat{F}$. The morphisms are $\Hom_{\textnormal{St}(\cat{F})}(X,Y) = \Hom_{\cat{F}}(X,Y)/\sim$ where $f \sim g$ if and only if $g-f$ factors through an object of $\omega$. Now given $X \in \cat{F}$, the shift functor $X \mapsto \Sigma X$ is computed by taking any admissible short exact sequence $0 \xrightarrow{} X \xrightarrow{} W \xrightarrow{} \Sigma X \xrightarrow{} 0$ with $W \in \omega$. Any two choices of short exact sequences lead to canonically equivalent objects in St($\cat{F}$). We now define exact triangles as follows: Given any short exact sequence $0 \xrightarrow{} X \xrightarrow{f} Y \xrightarrow{} Z \xrightarrow{} 0$, we form the pushout diagram below.
$$\begin{CD}
@. 0 @. 0 \\
@. @VVV  @VVV \\
0 @>>> X @>f>> Y @>>> Z @>>> 0 \\
@. @VVV  @VVV @| \\
0 @>>> W @>>> C_f @>>> Z @>>> 0 \\
@. @VVV  @VVV \\
@. \Sigma X @= \Sigma X \\
@. @VVV  @VVV \\
@. 0 @. 0
\end{CD}$$
By definition, $X \xrightarrow{f} Y \xrightarrow{} C_f \xrightarrow{} \Sigma X$ is a \emph{standard exact triangle}, and we call $C_f$ the \emph{cone on $f$}. Then, again by definition, a sequence of morphisms $A \xrightarrow{} B \xrightarrow{} C \xrightarrow{} \Sigma A$ is an \emph{exact triangle} if it is isomorphic in St($\cat{F}$) to a standard exact triangle. This gives St($\cat{F}$) the structure of a triangulated category by~\cite[Theorem~2.6]{happel-triangulated}.
\end{construction}

The following proposition is the key reason why $\textnormal{Ho}(\class{M})$ turns out to be triangulated when $\class{M}$ is hereditary.

\begin{proposition}\label{prop-hocat-Froben}
Let $\class{M} = (\class{Q},\class{W},\class{R})$ be an hereditary abelian model structure. Then the full subcategory $\class{Q} \cap \class{R}$, along with the collection of short exact sequences with all three terms in $\class{Q} \cap \class{R}$, is a Frobenius category. The projective-injective objects are precisely those in the core $\omega = \class{Q}\cap\class{W}\cap\class{R}$.
\end{proposition}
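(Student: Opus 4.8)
The plan is to realize $\class{Q}\cap\class{R}$ as an exact category with its inherited exact structure and then verify the three requirements of Definition~\ref{def-Frobenius}, proving along the way that the projective objects, the injective objects, and $\omega=\class{Q}\cap\class{W}\cap\class{R}$ all coincide. First I would record that $\class{Q}\cap\class{R}$ is closed under extensions in $\cat{A}$: the left-hand class $\class{Q}$ of a cotorsion pair is extension-closed (apply the long exact $\Ext$-sequence), and so is the right-hand class $\class{R}$. A full, extension-closed subcategory of an abelian category carries a canonical exact structure whose conflations are exactly the short exact sequences of $\cat{A}$ with all three terms in the subcategory; this is the structure we put on $\class{Q}\cap\class{R}$.

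Next I would check that every object of $\omega$ is both projective and injective in this exact category. If $T\in\omega$ and $0\to A\to B\to T\to 0$ is a conflation, then $A\in\class{R}$ while $T\in\class{Q}\cap\class{W}$, so $\Ext^1_{\cat{A}}(T,A)=0$ because $(\class{Q}\cap\class{W},\class{R})$ is a cotorsion pair, and the conflation splits; hence $T$ is projective. Dually, using that $(\class{Q},\class{W}\cap\class{R})$ is a cotorsion pair and $T\in\class{W}\cap\class{R}$, any conflation $0\to T\to B\to C\to 0$ (so $C\in\class{Q}$) splits, and $T$ is injective.

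The substance of the argument is producing enough projectives and injectives inside $\class{Q}\cap\class{R}$. Given $X\in\class{Q}\cap\class{R}$, completeness of $(\class{Q}\cap\class{W},\class{R})$ gives a short exact sequence $0\to Y\to X'\to X\to 0$ with $X'\in\class{Q}\cap\class{W}$ and $Y\in\class{R}$; here $X'\in\class{R}$ as well, since $\class{R}$ is extension-closed and $Y,X\in\class{R}$, so $X'\in\omega$, while $Y\in\class{Q}$ by the hereditary hypothesis on $(\class{Q},\class{W}\cap\class{R})$ applied to the epimorphism $X'\to X$ between objects of $\class{Q}$, so $Y\in\class{Q}\cap\class{R}$ and the sequence is a conflation exhibiting $X$ as a quotient of an object of $\omega$. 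Dually, completeness of $(\class{Q},\class{W}\cap\class{R})$ produces a conflation $0\to X\to Y'\to X''\to 0$ with $Y'\in\omega$ (using that $\class{Q}$ is extension-closed) and $X''\in\class{Q}\cap\class{R}$ (using the hereditary hypothesis on $(\class{Q}\cap\class{W},\class{R})$ for the monomorphism $X\to Y'$). Finally, if $P\in\class{Q}\cap\class{R}$ is projective then the first conflation splits, so $P$ is a direct summand of $X'\in\omega$; since $\class{Q}$, $\class{W}$ (thick), and $\class{R}$ are each closed under direct summands, so is $\omega$, whence $P\in\omega$, and dually every injective object lies in $\omega$. This identifies the projectives, the injectives, and $\omega$, so $\class{Q}\cap\class{R}$ is a Frobenius category with projective-injective objects exactly those of $\omega$.

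I expect the only genuine obstacle to be the step where heredity is invoked. Completeness of the two cotorsion pairs only guarantees that the ``large'' term of each approximation lies in $\class{Q}$ or $\class{R}$; the remaining term --- a kernel of an epimorphism or a cokernel of a monomorphism --- need not, a priori. The hereditary hypothesis is precisely the closure property that pulls that term back into $\class{Q}$ or $\class{R}$, turning the approximating short exact sequences into genuine conflations of $\class{Q}\cap\class{R}$. Everything else is a routine unwinding of the cotorsion-pair axioms.
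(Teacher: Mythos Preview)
Your proof is correct and follows essentially the same approach as the paper's: inherit the exact structure from extension-closure, use the $\Ext$-orthogonality of the two cotorsion pairs to see that every object of $\omega$ is projective-injective, and use completeness together with the hereditary hypothesis to produce admissible approximations by objects of $\omega$ inside $\class{Q}\cap\class{R}$, which simultaneously supplies enough projectives/injectives and forces any projective or injective to be a summand of something in $\omega$. The only cosmetic difference is the order of presentation---the paper first identifies the injectives with $\omega$ and then remarks that the same construction for an arbitrary $X$ gives enough injectives, whereas you build enough projectives/injectives first and then specialize---but the underlying argument is identical.
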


\begin{proof}
This observation was first made in~\cite{gillespie-exact model structures}, but it is so easy that we will reprove it here to make it transparent. First, any strictly full subcategory $\cat{S}$ of an abelian category $\cat{A}$, in which $\cat{S}$ is closed under extensions, naturally inherits an exact structure from $\cat{A}$. It consists of the usual short exact sequences but with all three terms in $\cat{S}$. Since $\class{Q} \cap \class{R}$ is closed under extensions it inherits this exact structure. Second, we see that if $0 \xrightarrow{} X \xrightarrow{} Y \xrightarrow{} Z \xrightarrow{} 0$ is a short exact sequence with $X,Y,Z \in \class{Q} \cap \class{R}$, then it will split if either $X$ or $Z$ is in $\omega$, since this would imply $\Ext^1_{\cat{A}}(Z,X) = 0$. So everything in $\omega$ is both injective and projective with respect to the inherited exact structure on $\class{Q} \cap \class{R}$. On the other hand, say $I \in \class{Q} \cap \class{R}$ is injective with respect to the exact structure. Using the complete cotorsion pair $(\class{Q},\class{W} \cap \class{R})$, write a short exact sequence $0 \xrightarrow{} I \xrightarrow{} W \xrightarrow{} Q \xrightarrow{} 0$ with $W \in \class{W} \cap \class{R}$ and $Q \in \class{Q}$. Since the model structure is hereditary, $Q \in \class{Q} \cap \class{R}$. Also note that $W \in \omega$. So we have an admissible short exact sequence, which must split by the hypothesis on $I$. We infer $I \in \omega$, since $\omega$ is closed under direct summands. Hence all injective objects of $\class{Q} \cap \class{R}$ are in $\omega$. Performing the same argument as the above for an \emph{arbitrary} $X \in \class{Q} \cap \class{R}$ (in place of $I$) shows that $\class{Q} \cap \class{R}$ has enough injectives. Finally, a similar argument with the cotorsion pair $(\class{Q} \cap \class{W},\class{R})$ shows that all projectives are in $\omega$, and that $\class{Q} \cap \class{R}$ has enough projectives.
\end{proof}

The above proposition along with the Fundamental Theorem~\ref{them-fundamental}~(i) indicate that $\textnormal{Ho}(\class{M})$ is a triangulated category whenever $\class{M}$ is hereditary. Hanno Becker made this precise in an appendix of his PhD thesis~\cite{becker-thesis}. His work shows that Happel's Construction~\ref{construction-happel} above has a natural generalization as follows. For any $X \in \cat{A}$, the shift functor $X \mapsto \Sigma X$ may be computed by taking any short exact sequence $0 \xrightarrow{} X \xrightarrow{} W \xrightarrow{} \Sigma X \xrightarrow{} 0$ with $W \in \class{W}$.
Again, any two choices of short exact sequences lead to canonically equivalent objects in $\textnormal{Ho}(\cat{M})$. Then we can define standard exact triangles and exact triangles, analogous to the method from Happel's Construction~\ref{construction-happel}. This will give $\textnormal{Ho}(\class{M})$ the structure of a triangulated category. We get the following result.

\begin{theorem}\label{them-hocat-triangulated}
Let $\class{M} = (\class{Q},\class{W},\class{R})$ be an hereditary model structure on $\cat{A}$. Then the inclusion $(\class{Q} \cap \class{R})/\sim \ \hookrightarrow \textnormal{Ho}(\class{M}),$ induced by the inclusion of the Frobenius category $\class{Q} \cap \class{R}$ into $\cat{A}$, is a triangle equivalence. Its inverse is bifibrant replacement.
\end{theorem}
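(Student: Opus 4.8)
The plan is to show that the inclusion $\iota \colon (\class{Q} \cap \class{R})/\sim \; \hookrightarrow \textnormal{Ho}(\class{M})$ is simultaneously (a) an equivalence of categories, and (b) exact, i.e. it carries the standard exact triangles of the Frobenius stable category $\textnormal{St}(\class{Q}\cap\class{R})$ (built via Happel's Construction~\ref{construction-happel} applied to the exact structure of Proposition~\ref{prop-hocat-Froben}) to exact triangles in $\textnormal{Ho}(\class{M})$, and conversely reflects them. The first half is not new work: Proposition~\ref{prop-hocat-Froben} tells us $\class{Q}\cap\class{R}$ is a Frobenius category with projective-injectives $\omega = \class{Q}\cap\class{W}\cap\class{R}$, so its stable category has the same objects with $\Hom$-sets modded out by maps factoring through $\omega$; and the Fundamental Theorem~\ref{them-fundamental}~(i) says exactly that $\iota$ (with this same $\sim$) is already an equivalence whose inverse is bifibrant replacement $X \mapsto RQX$. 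So the only real content is checking that this equivalence is compatible with the triangulated structures.

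First I would pin down the suspension functors on both sides and identify them. On the Happel side, $\Sigma X$ is computed from an admissible short exact sequence $0 \to X \to W \to \Sigma X \to 0$ with $W \in \omega = \class{Q}\cap\class{W}\cap\class{R}$; the existence of such a sequence with all three terms in $\class{Q}\cap\class{R}$ is precisely the "enough injectives" half of Proposition~\ref{prop-hocat-Froben}, and heredity guarantees $\Sigma X \in \class{Q}\cap\class{R}$. On the $\textnormal{Ho}(\class{M})$ side, Becker's generalization computes the shift from \emph{any} short exact sequence $0 \to X \to W \to \Sigma X \to 0$ with merely $W \in \class{W}$. Since an $\omega$-sequence is in particular a $\class{W}$-sequence, the two constructions agree up to the canonical isomorphism (the independence-of-choice statement quoted after Construction~\ref{construction-happel}), so $\iota$ intertwines the two suspensions. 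The same observation handles exact triangles: a standard Happel triangle $X \xrightarrow{f} Y \to C_f \to \Sigma X$ is produced by the pushout square along the chosen $\omega$-sequence, and since that square is in particular a valid pushout diagram of the kind used in Becker's construction in $\cat{A}$, the image under $\iota$ is a standard exact triangle in $\textnormal{Ho}(\class{M})$. Hence $\iota$ sends standard triangles to standard (in particular exact) triangles, and since every triangle is by definition isomorphic to a standard one and $\iota$ is a fully faithful equivalence, $\iota$ reflects exact triangles too.

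Finally I would assemble these into the statement: an additive equivalence of categories that is compatible with the suspension functors and preserves and reflects the classes of exact triangles is a triangle equivalence, and its inverse (being the essential inverse of $\iota$, which by Theorem~\ref{them-fundamental}~(i) is bifibrant replacement $X \mapsto RQX$) is then automatically a triangle functor as well.

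The main obstacle I anticipate is purely bookkeeping rather than conceptual: making the comparison of the two suspension constructions genuinely canonical and natural, i.e. checking that the "any two choices give canonically equivalent objects" isomorphisms on the Happel side and on the Becker side are compatible with each other and with morphisms, so that $\iota$ really commutes with $\Sigma$ up to a \emph{natural} isomorphism (not just an objectwise one), and likewise that the octahedral/rotation data match up. This is exactly the sort of diagram-chase that is routine but tedious, and in a survey one would reasonably defer the full verification to~\cite{becker-thesis} and~\cite{gillespie-exact model structures}, citing that Becker's appendix carries out precisely this identification of $\textnormal{Ho}(\class{M})$ with a Frobenius stable category.
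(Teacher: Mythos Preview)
Your argument is correct, and it takes a somewhat different route from the paper's. You work with the inclusion $\iota$ directly: since $\omega = \class{Q}\cap\class{W}\cap\class{R} \subseteq \class{W}$, every Happel standard triangle in $\textnormal{St}(\class{Q}\cap\class{R})$ is \emph{literally} an instance of Becker's construction of a standard triangle in $\textnormal{Ho}(\class{M})$, so $\iota$ preserves exact triangles; then you invoke the general fact that an equivalence preserving triangles automatically has a triangle-preserving inverse. The paper instead attacks the inverse functor head-on: it cites Becker's Resolution Lemma \cite[Lemma~1.4.4 and its dual]{becker} to show that cofibrant and fibrant replacement each send a pushout diagram of the standard-triangle shape to another such diagram, so that bifibrant replacement carries standard triangles of $\textnormal{Ho}(\class{M})$ to standard triangles inside $\class{Q}\cap\class{R}$.

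Your approach is the more elementary one and avoids importing the Resolution Lemma; the trade-off is exactly the bookkeeping you flag, namely pinning down the naturality of the suspension comparison. The paper's approach, by contrast, makes explicit \emph{how} the inverse functor behaves on triangles (via replacement of the entire pushout square), which is more informative if one later wants to compute with bifibrant replacements, but it leans on an external technical lemma. Either route suffices for the statement as written.
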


\begin{proof}
We already know that the inclusion of categories is an equivalence from the Fundamental Theorem~\ref{them-fundamental}~(i). The key point is to see why the inverse, bifibrant replacement, is triangle preserving. This follows from Becker's ``Resolution Lemma'', \cite[Lemma~1.4.4 and its dual]{becker}. Indeed applying that lemma will show that any pushout diagram of the form
$$\begin{CD}
@. 0 @. 0 \\
@. @VVV  @VVV \\
0 @>>> X @>f>> Y @>>> Z @>>> 0 \\
@. @VVV  @VVV @| \\
0 @>>> W @>>> C_f @>>> Z @>>> 0 \\
@. @VVV  @VVV \\
@. \Sigma X @= \Sigma X \\
@. @VVV  @VVV \\
@. 0 @. 0
\end{CD}$$
is sent to another pushout diagram of this form when taking cofibrant and fibrant replacements as in Lemma~\ref{lemma-cof-replacements}. It means that standard exact triangles (and hence all exact triangles) of $\textnormal{Ho}(\class{M})$, are sent to standard exact triangles (resp. exact triangles) when performing a bifibrant replacement.
\end{proof}

\section{Model structures for derived categories}\label{sec-chain-complexes}

In the last section we saw how hereditary cotorsion pairs are connected to the construction of triangulated categories. The most important example of a triangulated category is probably the derived category of a ring or scheme. We now describe how complete hereditary cotorsion pairs give rise to model structures for derived categories.

We start with a general lemma which has not explicitly appeared in the literature. It is useful because it shows that the formal homotopy relation coming from a model structure on chain complexes, will typically coincide with the usual notion of chain homotopic maps. Recall that two chain maps $f,g : X \xrightarrow{} Y$, between chain complexes $X$ and $Y$, are \emph{chain homotopic} if there exists a collection of maps $\{s_n : X_n \xrightarrow{} Y_{n+1}\}$, for which $g_n - f_n = d_{n+1}s_n +s_{n-1}d_n$.

\begin{lemma}\label{lemma-chain-homotopy}
Let $\cat{A}$ be an abelian category. Assume $(\class{Q}, \class{W}, \class{R})$ is a Hovey triple in $\cha{A}$, the category of chain complexes, and that its core $\class{Q}\cap \class{W}\cap \class{R}$ is closed under suspensions (that is, shifts). Then each $W \in \class{Q}\cap \class{W}\cap \class{R}$ is contractible. Moreover, if $\class{W}$ contains all contractible complexes, then we have $$\text{Ho}(\cha{A})(X,Y) \cong \Hom_{\cha{A}}(QX,RY)/\sim$$ where $\sim$ is the usual chain homotopy relation. In this case we also note that $$\text{Ho}(\cha{A})(X,Y) \cong K(\class{Q} \cap \class{R}),$$ where  $K(\class{Q} \cap \class{R})$ denotes the full subcategory of $K(R)$ consisting of all complexes in $\class{Q} \cap \class{R}$.
\end{lemma}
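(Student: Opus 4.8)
The plan is to establish the three assertions in order, using Hovey's correspondence and the characterization of the homotopy relation from the Fundamental Theorem.

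First I would show that each $W \in \class{Q}\cap\class{W}\cap\class{R}$ is contractible. Since the core is closed under suspensions, $\Sigma^{-1}W \in \class{Q}\cap\class{W}\cap\class{R}$ as well. Consider any short exact sequence realizing $W$ as a ``suspension'': concretely, one wants a short exact sequence $0 \to \Sigma^{-1}W \to E \to W \to 0$ with $E$ contractible, or more directly, one observes that the identity map $1_W$ must be null-homotopic. The cleanest route: in $\cha{A}$ there is always a degreewise-split short exact sequence $0 \to \Sigma^{-1}W \to P \to W \to 0$ where $P$ is a contractible complex (the mapping cone of $1_{\Sigma^{-1}W}$, i.e.\ a sum of disk complexes on the components of $W$; but since $\cat{A}$ need not have projectives we instead use the cone of $1_W$ built from the objects $W_n$ themselves). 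Then $P \in \class{Q}$? Not necessarily. The real argument is: since $(\class{Q}\cap\class{W},\class{R})$ is a cotorsion pair and $W \in \class{Q}\cap\class{W}$ while $\Sigma^{-1}W \in \class{R}$ (as it lies in the core $\subseteq \class{R}$), we get $\Ext^1_{\cha{A}}(W, \Sigma^{-1}W) = 0$. But $\Ext^1_{\cha{A}}(W,\Sigma^{-1}W)$ classifies the chain homotopy classes obstructing contractibility of $W$; vanishing of this group says precisely that $1_W$ is null-homotopic, i.e.\ $W$ is contractible. (Equivalently, $\operatorname{Ext}^1$ in the chain complex category fits into the exact sequence involving $\prod \Hom(W_n, W_n)$ and chain homotopies, so its vanishing forces a contracting homotopy for $1_W$.)

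Next, assuming $\class{W}$ contains all contractible complexes, I would identify the formal homotopy relation with chain homotopy. By the Fundamental Theorem~\ref{them-fundamental}(iii), $\Ho(\cha{A})(X,Y) \cong \Hom_{\cha{A}}(QX,RY)/\!\sim$, where $f \sim g$ iff $g-f$ factors through an object of the core $\class{Q}\cap\class{W}\cap\class{R}$. By the first part, every core object is contractible; and conversely every contractible complex lies in $\class{W}$ by hypothesis, and a contractible complex lies in both $\class{Q}$ and $\class{R}$ since it is a direct summand of (indeed isomorphic to) a sum of disk complexes $D^n(M)$, which belong to every class in a hereditary cotorsion pair containing the relevant generators --- more carefully, disks are both categorically projective-like and injective-like for the relevant $\Ext^1_{\cha{A}}$ computations, so $D^n(M) \in \class{Q}$ and $D^n(M) \in \class{R}$. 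Hence factoring through a core object is the same as factoring through a contractible complex. So it remains to check: two chain maps $QX \to RY$ differ by a map factoring through a contractible complex if and only if they are chain homotopic. This is the standard fact --- $f$ factors through a contractible complex iff $f$ is null-homotopic --- which one proves by noting $f$ null-homotopic implies $f$ factors through the cone of $1_X$ (a contractible complex), and conversely any map through a contractible complex is null-homotopic since the identity on that complex is.

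Finally, the identification $\Ho(\cha{A}) \simeq K(\class{Q}\cap\class{R})$ (I read the displayed ``$\cong$'' as this equivalence of categories, the right-hand side being the full subcategory of $K(R)$ on $\class{Q}\cap\class{R}$): this is immediate from the Fundamental Theorem~\ref{them-fundamental}(i), which gives an equivalence $(\class{Q}\cap\class{R})/\!\sim\ \hookrightarrow \Ho(\class{M})$ with $\sim$ being ``factors through the core''; combined with the previous paragraph this $\sim$ is exactly chain homotopy on $\class{Q}\cap\class{R}$, so $(\class{Q}\cap\class{R})/\!\sim\ = K(\class{Q}\cap\class{R})$. I expect the main obstacle to be the first step --- pinning down precisely why $W \in \class{Q}\cap\class{W}\cap\class{R}$, with core closed under shift, forces contractibility without invoking projectives or injectives in $\cat{A}$; the key is to extract the contracting homotopy from the vanishing of $\Ext^1_{\cha{A}}(W,\Sigma^{-1}W)$ (using that $\Sigma^{-1}W$ sits in $\class{R}$ and $W$ in $\class{Q}\cap\class{W}$), together with the standard description of $\Ext^1$ in a chain complex category in terms of null-homotopies. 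Everything after that is routine bookkeeping with disk complexes and the cone construction.
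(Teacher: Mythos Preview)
Your argument for contractibility of core objects is essentially the same as the paper's (the paper uses $\Ext^1_{\cha{A}}(\Sigma W,W)=0$, you use the isomorphic group $\Ext^1_{\cha{A}}(W,\Sigma^{-1}W)=0$; both yield a contracting homotopy for $1_W$). The third assertion is also handled the same way.

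The genuine gap is in the converse direction of the second assertion. You claim that every contractible complex lies in $\class{Q}\cap\class{R}$ (hence in the core, since it lies in $\class{W}$ by hypothesis), arguing that disk complexes $D^n(M)$ are ``categorically projective-like and injective-like for the relevant $\Ext^1$ computations.'' This is false in general: one has $\Ext^1_{\cha{A}}(D^n(M),V)\cong\Ext^1_{\cat{A}}(M,V_n)$, so $D^n(M)\in\class{Q}=\leftperp{(\class{W}\cap\class{R})}$ only when $M$ satisfies the corresponding vanishing in $\cat{A}$. For instance, in the projective model structure $\class{M}_{prj}=(\class{P},\class{E},\class{A})$ on $\ch$, the disk $D^n(M)$ is cofibrant (DG-projective) only when $M$ is projective; a contractible complex with non-projective components is not in $\class{Q}$.

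The paper avoids this by a lifting argument rather than by placing the contractible complex $C$ in the core. Given $g-f:QX\to RY$ factoring through a contractible $C$, one uses completeness of $(\class{Q},\class{W}\cap\class{R})$ to write $0\to R'\to Q'\to C\to 0$ with $Q'\in\class{Q}$ and $R'\in\class{W}\cap\class{R}$. Since $C\in\class{W}$ and $\class{W}$ is thick, $Q'\in\class{Q}\cap\class{W}$. Because $\Ext^1_{\cha{A}}(QX,R')=0$, the map $QX\to C$ lifts through $Q'$, so $g-f$ factors through the trivially cofibrant $Q'$, which suffices for the formal homotopy relation. You need this extra replacement step; the shortcut of asserting $C\in\class{Q}\cap\class{R}$ does not work.
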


\begin{proof}
To say $W$ is contractible is to say that the identity map $1_W$ is null homotopic, or in other words, $1_W$ is chain homotopic to 0. Our assumptions imply that $\Ext^1_{\cha{A}}(\Sigma W,W) = 0$. By a ``well known'' fact, for example see Lemma~\cite[Lemma~2.1]{gillespie}, this implies \emph{all} maps $W \xrightarrow{} W$ are null homotopic. In particular, $1_W$ is null homotopic.

By the Fundamental Theorem~\ref{them-fundamental}~(iii) we know $$\text{Ho}(\cha{A})(X,Y) \cong \Hom_{\cha{A}}(QX,RY)/\sim$$ where $f \sim g$ if and only if $g - f$ factors through some $W \in \class{Q} \cap \class{W} \cap \class{R}$.  Since $W$ is contractible, this implies $g-f$ is null homotopic; for example, see~\cite[Corollary~3.5]{gillespie-N-complexes}. Conversely, say $g-f$ is null homotopic, or equivalently, that it factors through some contractible complex $C$. Write a short exact sequence $0 \xrightarrow{} R' \xrightarrow{} Q' \xrightarrow{} C \xrightarrow{} 0$ with $Q' \in \class{Q}$ and $R' \in \class{R} \cap \class{W}$. If $\class{W}$ contains all contractible complexes, then $C \in \class{W}$. Hence $Q' \in \class{Q} \cap \class{W}$. Now in the factorization $g-f : QX \xrightarrow{\alpha} C \xrightarrow{\beta} RY$ we have that $\alpha$ must lift over $Q' \in \class{Q} \cap \class{W}$, since $\Ext^1(QX,R') = 0$. This shows that $g-f$ factors through an object of $\class{Q} \cap \class{W}$ and hence $f$ is formally homotopic to $g$ by~\cite[Prop.~4.4]{gillespie-exact model structures}.

The last statement is then a restatement of the Fundamental Theorem~\ref{them-fundamental}~(i).
\end{proof}

\subsection{Models for the derived category}\label{sec-more-derived-models}
In addition to the projective and injective model structures on $\ch$, from Section~\ref{sec-derived-models}, there is an abundance of ways to model the derived category $\class{D}(R)$. The ideas go back to~\cite{gillespie,gillespie-degreewise-model-strucs}, where given a cotorsion pair $(\class{X},\class{Y})$ in $R$-Mod, various methods for lifting them to $\ch$ were given. In particular, based on the definitions of DG-injective and DG-projective complexes from Section~\ref{sec-derived-models} the following notation was introduced in~\cite[Def.~3.3]{gillespie}.

\begin{notation}\label{notation-complexes-pairs}
For a given cotorsion pair $(\class{X},\class{Y})$ in $R$-Mod we define the following classes of chain complexes in $\ch$.
\begin{enumerate}
\item  $\tilclass{X}$ denotes the class of all exact chain complexes $X$ with cycles $Z_nX \in \class{X}$.

\item  $\tilclass{Y}$ denotes the class of all exact chain complexes $Y$ with cycles $Z_nY \in \class{Y}$.

\item $\dgclass{X}$ denotes the class of all chain complexes $X$ with components $X_n \in \class{X}$ and such that all chain maps $X \xrightarrow{} Y$ are null homotopic whenever $Y \in \tilclass{Y}$.

\item $\dgclass{Y}$ denotes the class of all chain complexes $Y$ with components $Y_n \in \class{Y}$ and such that all chain maps $X \xrightarrow{} Y$ are null homotopic whenever $X \in \tilclass{X}$.
\end{enumerate}
Note that $\class{X}$ is closed under extensions and so for any $X \in \tilclass{X}$ we have each component $X_n \in \class{X}$ too. Similarly, all $Y_n \in \class{Y}$ whenever $Y \in \tilclass{Y}$.
\end{notation}

In particular, starting with the canonical projective cotorsion pair ($\class{P},\class{A})$ in $R$-Mod, the class $\tilclass{P}$ turns out to equal the class of categorically projective chain complexes. They are the split exact complexes with projective components. The class $\dgclass{P}$ is precisely the class of DG-projective complexes. The analogous facts also hold for lifting the canonical injective cotorsion pair $(\class{A},\class{I})$.

Now as noted in Section~\ref{sec-derived-models}, to model the derived category $\class{D}(R)$ one seeks an abelian model structure on $\ch$ with the exact chain complexes as the trivial objects. We let $\class{E}$ denote the (thick) class of all exact chain complexes. We ask what are the precise conditions one needs to impose on $(\class{X},\class{Y})$ to get a Hovey triple $\class{M}_{(\class{X},\class{Y})} = (\dgclass{X},\class{E},\dgclass{Y})$. It was shown in~\cite[Section~3]{gillespie} that we do have cotorsion pairs $(\dgclass{X},\tilclass{Y})$ and $(\tilclass{X},\dgclass{Y})$. Moreover, these cotorsion pairs are hereditary if and only if $(\class{X},\class{Y})$ is hereditary, which in turn is equivalent to each of  the desired equalities $\tilclass{X} = \dgclass{X} \cap \class{E}$ and $\tilclass{Y} = \class{E} \cap \dgclass{Y}$.
Thus $\class{M}_{(\class{X},\class{Y})} = (\dgclass{X},\class{E},\dgclass{Y})$ is a Hovey triple if and only if the two associated cotorsion pairs are complete. This last question has been approached using Theorem~\ref{them-cogen-by-a-set}, not just in $\ch$, but in the context of Grothendieck categories, in~\cite{gillespie,gillespie-sheaves,gillespie-quasi-coherent} and~\cite{stovicek-Hill}. But perhaps the most satisfying solution came in~\cite{GangYang-Liu models} and~\cite{Ding-question-gillespie}. In the first, it is proved directly that the cotorsion pairs $(\dgclass{X},\tilclass{Y})$ and $(\tilclass{X},\dgclass{Y})$ are complete whenever $(\class{X},\class{Y})$ is a complete hereditary cotorsion pair. This was then generalized to bicomplete abelian categories in~\cite{Ding-question-gillespie}, giving the following result.

\begin{theorem}[Models for Derived Categories]\label{them-derivedcat-models}
Let $\cat{A}$ be a bicomplete abelian category. Then $\class{M}_{(\class{X},\class{Y})} = (\dgclass{X},\class{E},\dgclass{Y})$ is a Hovey triple if and only if $(\class{X},\class{Y})$ is a complete hereditary cotorsion pair in $\cat{A}$. In this case $\class{M}_{(\class{X},\class{Y})}$ is also hereditary and $\textnormal{Ho}(\class{M}_{(\class{X},\class{Y})}) \cong \class{D}(\cat{A})$.
\end{theorem}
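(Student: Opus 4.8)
The plan is to deduce both implications from Theorem~\ref{them-how to construct triples}, Hovey's correspondence (Theorem~\ref{them-hovey-correspondence}), and the formal behavior of the lifted classes recalled above. Throughout, $(\class{X},\class{Y})$ is understood to be a cotorsion pair in $\cat{A}$ (this is presupposed by the notation $\dgclass{X},\dgclass{Y}$), so by~\cite{gillespie} the pairs $(\dgclass{X},\tilclass{Y})$ and $(\tilclass{X},\dgclass{Y})$ are cotorsion pairs in $\cha{A}$, with $\tilclass{X},\tilclass{Y}\subseteq\class{E}$ since their members are exact complexes; I also use freely that $\class{E}$ is thick. For the ``$\Leftarrow$'' direction, assume $(\class{X},\class{Y})$ is a complete hereditary cotorsion pair. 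From~\cite{gillespie}, heredity makes both lifted cotorsion pairs hereditary and yields the equalities $\tilclass{X}=\dgclass{X}\cap\class{E}$ and $\tilclass{Y}=\class{E}\cap\dgclass{Y}$. The crucial --- and, I expect, hardest --- step is to prove that $(\dgclass{X},\tilclass{Y})$ and $(\tilclass{X},\dgclass{Y})$ are \emph{complete}; I would simply import this from~\cite{GangYang-Liu models,Ding-question-gillespie}, where it is established for complete hereditary input over a bicomplete abelian category, using a chain-complex adaptation of the set-cogeneration method behind Theorem~\ref{them-cogen-by-a-set}.

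Granting completeness, Theorem~\ref{them-how to construct triples} applies to $(\dgclass{X},\tilclass{Y})$ and $(\tilclass{X},\dgclass{Y})$: both are complete hereditary, $\tilclass{Y}=\class{E}\cap\dgclass{Y}\subseteq\dgclass{Y}$, and their cores coincide because $\dgclass{X}\cap\tilclass{Y}=\dgclass{X}\cap\class{E}\cap\dgclass{Y}=\tilclass{X}\cap\dgclass{Y}$. Hence there is a unique thick class $\class{W}$ with $(\dgclass{X},\class{W},\dgclass{Y})$ a Hovey triple, and I would check $\class{W}=\class{E}$. The inclusion $\class{W}\subseteq\class{E}$ is immediate from the explicit description of $\class{W}$ in Theorem~\ref{them-how to construct triples}: every $X\in\class{W}$ sits in a short exact sequence whose two outer terms lie in $\tilclass{X}\cup\tilclass{Y}\subseteq\class{E}$, and $\class{E}$ is thick. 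For $\class{E}\subseteq\class{W}$, given $X\in\class{E}$, completeness of $(\tilclass{X},\dgclass{Y})$ provides a short exact sequence $0\to V\to U\to X\to 0$ with $U\in\tilclass{X}$ and $V\in\dgclass{Y}$; then $U\in\class{E}$, so thickness of $\class{E}$ forces $V\in\class{E}\cap\dgclass{Y}=\tilclass{Y}$, and this very sequence witnesses $X\in\class{W}$ by the description in Theorem~\ref{them-how to construct triples}. Thus $\class{M}_{(\class{X},\class{Y})}=(\dgclass{X},\class{E},\dgclass{Y})$ is a Hovey triple, and it is hereditary because its two associated cotorsion pairs are.

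For the ``$\Rightarrow$'' direction, suppose $(\dgclass{X},\class{E},\dgclass{Y})$ is a Hovey triple. By Hovey's correspondence, $(\dgclass{X},\class{E}\cap\dgclass{Y})$ and $(\dgclass{X}\cap\class{E},\dgclass{Y})$ are complete cotorsion pairs. Since $(\dgclass{X},\tilclass{Y})$ is also a cotorsion pair with left-hand class $\dgclass{X}$, it must have the same right-hand class, so $\tilclass{Y}=\class{E}\cap\dgclass{Y}$; likewise, comparing $(\tilclass{X},\dgclass{Y})$ with $(\dgclass{X}\cap\class{E},\dgclass{Y})$ gives $\tilclass{X}=\dgclass{X}\cap\class{E}$. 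By the equivalence recalled above, either of these equalities forces $(\class{X},\class{Y})$ to be hereditary. For completeness of $(\class{X},\class{Y})$, fix $M\in\cat{A}$ and apply completeness of $(\dgclass{X},\class{E}\cap\dgclass{Y})$ to the sphere $S^0(M)$: this produces short exact sequences of complexes $0\to V\to U\to S^0(M)\to 0$ and $0\to S^0(M)\to V'\to U'\to 0$ with $U,U'\in\dgclass{X}$ and $V,V'\in\class{E}\cap\dgclass{Y}$, and applying the exact degree-$0$ component functor yields $0\to V_0\to U_0\to M\to 0$ and $0\to M\to V'_0\to U'_0\to 0$ with $U_0,U'_0\in\class{X}$ and $V_0,V'_0\in\class{Y}$ --- exactly the completeness of $(\class{X},\class{Y})$.

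It remains to identify $\textnormal{Ho}(\class{M}_{(\class{X},\class{Y})})$ with $\class{D}(\cat{A})$, for which I would show the weak equivalences of $\class{M}_{(\class{X},\class{Y})}$ are exactly the homology isomorphisms. One inclusion follows from Proposition~\ref{prop-characterization of weak equivs}: a monomorphism with exact cokernel and an epimorphism with exact kernel are homology isomorphisms by the long exact homology sequence, so every weak equivalence is a homology isomorphism. Conversely, factoring an arbitrary chain map $f\colon X\to Y$ through its mapping cylinder as $X\xrightarrow{\iota}\mathrm{Cyl}(f)\xrightarrow{\pi}Y$ exhibits $\iota$ as a monomorphism with cokernel the mapping cone $C(f)$ and $\pi$ as a chain homotopy equivalence with contractible (hence exact) kernel; if $f$ is a homology isomorphism then so is $\iota$, whence $C(f)$ is exact and $f$ has the form required by Proposition~\ref{prop-characterization of weak equivs}. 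Hence the weak equivalences are the homology isomorphisms, so by Theorem~\ref{them-fundamental}~(ii) the homotopy category is the localization of $\cha{A}$ at the homology isomorphisms, that is, $\class{D}(\cat{A})$ --- and it is precisely the existence of this model structure that certifies this localization is a genuine (locally small) category. As indicated, the single nontrivial ingredient is the completeness of the lifted cotorsion pairs; everything else is formal bookkeeping around Theorem~\ref{them-how to construct triples}, Hovey's correspondence, and standard facts about $\cha{A}$.
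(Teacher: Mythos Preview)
Your argument is correct. The paper itself, being a survey, does not give a formal proof but only the discussion immediately preceding the theorem statement, together with references to~\cite{gillespie,GangYang-Liu models,Ding-question-gillespie}. You fill in essentially the same skeleton and supply the details the paper omits, particularly for the ``$\Rightarrow$'' direction and for the identification of the homotopy category with $\class{D}(\cat{A})$.

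There is one minor difference in packaging worth noting. For the ``$\Leftarrow$'' direction, the paper's route is more direct than yours: once one has the equalities $\tilclass{X}=\dgclass{X}\cap\class{E}$ and $\tilclass{Y}=\class{E}\cap\dgclass{Y}$ and completeness of the lifted pairs, Hovey's correspondence (Theorem~\ref{them-hovey-correspondence}) applies immediately with the \emph{given} thick class $\class{E}$, since those equalities are exactly the compatibility conditions $\dgclass{X}\cap\class{E}=\tilclass{X}$ and $\class{E}\cap\dgclass{Y}=\tilclass{Y}$ required of a Hovey triple. Your route instead invokes Theorem~\ref{them-how to construct triples} to produce an abstract thick class $\class{W}$ and then separately verifies $\class{W}=\class{E}$. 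This is a harmless detour---and it does illustrate Theorem~\ref{them-how to construct triples} in action---but since you already grant that $\class{E}$ is thick, you could have bypassed it entirely and applied Hovey's correspondence directly, as the paper does.
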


The original motive for Theorem~\ref{them-derivedcat-models} was for the construction of the \emph{flat model structure} on chain complexes of (quasi-coherent) sheaves~\cite{gillespie-quasi-coherent}.
We saw in Section~\ref{sec-derived-models} that the projective model structure formalizes the fact that $\Ext^n_R$ can be computed with projective resolutions while the injective model structure corresponds to its computation with injective resolutions. On the other hand, it is well known that $\Tor^R_n$, the derived tensor product, can be computed with projective resolutions but NOT with injectives. From the abelian model category point of view, this corresponds to the fact that the projective model is \emph{monoidal} while the injective model is not. Loosely speaking, a model structure on a category with a tensor product is monoidal if the model structure is compatible with that tensor product. In this case, the main consequence is that the tensor product descends to a derived tensor product on the homotopy category which can then be computed with cofibrant replacement; see~\cite[Chapter~4]{hovey-model-categories} and~\cite[Section~7]{hovey}. Now the category $\Qco(X)$, of quasi-coherent sheaves on a scheme $X$, rarely has enough projectives. So the projective model structure for $\class{D}(X)$ need not exist. Since $\Qco(X)$ is a Grothendieck category, the injective model structure does exist, but again it is not monoidal. This leaves a distressing gap in approaching the derived category from the model category perspective. To solve the problem one turns, not surprisingly, to flat sheaves. A famous result in relative homological algebra is that Enochs' flat cotorsion pair $(\class{F},\class{C})$ in $R$-Mod is complete (and hereditary). Applying Theorem~\ref{them-derivedcat-models} to it gives rise to a Hovey triple $(\dgclass{F},\class{E},\dgclass{C})$ in $\ch$. The model structure was shown to be monoidal in~\cite{gillespie} and it was extended to chain complexes of sheaves and quasi-coherent sheaves in~\cite{gillespie-sheaves,gillespie-quasi-coherent}. One benefit of having this monoidal model structure is that it follows at once that the triangulated structure on the homotopy category $\class{D}(X)$ is strongly compatible with the derived tensor product; see~\cite{may}.

Finally, we point out that there are even more model structures for $\class{D}(R)$ than just the ones coming from Theorem~\ref{them-derivedcat-models}. Given any cotorsion pair $(\class{X},\class{Y})$ in $R$-Mod, assumed to be cogenerated by a set $\class{S}$, it lifts to two ``degreewise'' model structures for the derived category; see~\cite{gillespie-degreewise-model-strucs}.  To illustrate, Baer's criterion implies that the categorical injective cotorsion pair $(\class{A},\class{I})$ is cogenerated by the set $\{R/I\}$ where $I$ ranges through all (left) ideals of $R$. It lifts to the \emph{degreewise injective cotorsion pair} on $\ch$. It is represented by the Hovey triple $(\leftperp{(\dwclass{I}\cap \class{E})},\class{E},\dwclass{I})$, where $\dwclass{I}$ denotes the class of all complexes $I$ with each $I_n$ injective; see~\cite[Section~4.1]{gillespie-degreewise-model-strucs}. Becker shows in~\cite{becker} that this model structure is the right Bousfield localization of the \emph{Inj model structure} of Proposition~\ref{prop-coderived} by the \emph{exact Inj model structure} of Proposition~\ref{prop-inj-stable}.

\section{Models for the coderived and contraderived categories} \label{sec-coderived-models}

Two particular localizations of $K(R)$, called the coderived and contraderived categories of a ring $R$, were introduced by Positselski in~\cite{positselski}. If $R$ has finite global dimension then they each coincide with the usual derived category $\class{D}(R)$. But for a general ring $R$, they are different and each contain $\class{D}(R)$. In this section we give brief constructions of these categories in terms of abelian model structures recently appearing in~\cite{positselski,bravo-thesis,bravo-gillespie-hovey,becker,becker-thesis,stovicek-purity,gillespie-mock projectives}. We will use the following notation.

\begin{notation}\label{notation-complexes}
Let $\class{C}$ denote a given class of $R$-modules, or objects in some other abelian category.
We set the following notation for the two classes of chain complexes below.
\begin{enumerate}
\item $\dwclass{C}$ denotes the class of all chain complexes $X$ with components $X_n \in \class{C}$.

\item $\exclass{C}$ denotes the class of all exact chain complexes $X$ with components $X_n \in \class{C}$.
\end{enumerate}
The \emph{dw} is meant to indicate that the complexes are ``degreewise'' in $\class{C}$, while the \emph{ex} is meant to emphasize that the complexes are ``exact'' and degreewise in $\class{C}$.
\end{notation}

\subsection{The coderived category}
The canonical injective cotorsion pair $(\cat{A},\class{I})$ in $R$-Mod can easily be shown to lift to a complete cotorsion pair $(\class{W}_{\textnormal{co}},\dwclass{I})$ in $\ch$. Indeed one lets $\class{S} = \{D^n(R/I)\}_{n \in \Z}$ as $I$ ranges through all (left) ideals of $R$. It cogenerates a complete cotorsion pair by Theorem~\ref{them-cogen-by-a-set} which turns out to be $(\class{W}_{\textnormal{co}},\dwclass{I})$; see~\cite[Prop.~4.4]{gillespie-degreewise-model-strucs}. As in Notation~\ref{notation-complexes}, $\dwclass{I}$ is the class of all degreewise injective complexes. The class $\class{W}_{\textnormal{co}}$ consists of all complexes $W$ such that $W \xrightarrow{} I$ is null homotopic whenever $I \in \dwclass{I}$. Following Positselski, such complexes are called \textbf{coacyclic}. All contractible complexes are coacyclic, and all coacyclic complexes are exact. But for general rings, an exact complex need not be coacyclic. Still, the class $\class{W}_{\textnormal{co}}$ is thick, and in analogy to the usual derived category, one obtains the coderived category by killing the coacyclic complexes. The cotorsion pair $(\class{W}_{\textnormal{co}}, \dwclass{I})$ is an \emph{injective cotorsion pair}, meaning it determines an abelian model structure on $\ch$ in which all objects are cofibrant. The following proposition sums all this up.

\begin{proposition}\label{prop-coderived}
The triple $\class{M}^{inj}_{\textnormal{co}} = (All, \class{W}_{\textnormal{co}}, \dwclass{I})$ is an hereditary abelian model structure on $\ch$. Its homotopy category, $\textnormal{Ho}(\class{M}^{inj}_{\textnormal{co}})$, is called the \textbf{coderived category} and it is equivalent to $K(Inj)$, the chain homotopy category of all complexes of injectives. That is, $\textnormal{Ho}(\class{M}^{inj}_{\textnormal{co}}) \cong K(Inj)$.
\end{proposition}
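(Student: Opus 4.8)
The plan is to obtain $\class{M}^{inj}_{\textnormal{co}}$ from Theorem~\ref{them-how to construct triples} by feeding it two complete hereditary cotorsion pairs with equal cores, and then to identify the homotopy category using Lemma~\ref{lemma-chain-homotopy}.

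First I would assemble the two cotorsion pairs. One is the degreewise injective cotorsion pair $(\class{W}_{\textnormal{co}}, \dwclass{I})$ recalled just above; it is complete, being cogenerated by the set $\{D^n(R/I)\}$ via Theorem~\ref{them-cogen-by-a-set}, and it is hereditary: $\dwclass{I}$ is closed under cokernels of monomorphisms because a short exact sequence of complexes which is, in each degree, a short exact sequence of modules with injective first two terms splits degreewise, so the cokernel has injective components; and $\class{W}_{\textnormal{co}}$ is closed under kernels of epimorphisms because it is thick. The other is the categorical injective cotorsion pair $(\cat{A}, \class{J})$ in $\ch$, where $\class{J}$ is the class of injective objects of $\ch$; it is complete since $\ch$ has enough injectives, and it is hereditary for trivial reasons.

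The key step is to match the cores, i.e.\ to show $\class{J} = \class{W}_{\textnormal{co}} \cap \dwclass{I}$. If $W \in \class{W}_{\textnormal{co}} \cap \dwclass{I}$, then $W$ has injective components and, since $W \in \dwclass{I}$ and $W \in \class{W}_{\textnormal{co}}$, the identity $1_W \colon W \to W$ is null homotopic, so $W$ is contractible; a contractible complex with injective components is injective in $\ch$. Conversely, every injective object of $\ch$ is a contractible complex with injective components, hence coacyclic (all contractible complexes are coacyclic) and so lies in $\class{W}_{\textnormal{co}} \cap \dwclass{I}$. As $\class{J} \subseteq \dwclass{I}$ too, Theorem~\ref{them-how to construct triples} applies with $(\class{Q}, \tilclass{R}) = (\cat{A}, \class{J})$ and $(\tilclass{Q}, \class{R}) = (\class{W}_{\textnormal{co}}, \dwclass{I})$, producing a unique thick $\class{W}$ making $(\cat{A}, \class{W}, \dwclass{I})$ a Hovey triple. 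Since $\cat{A}$ is the whole cofibrant class, the associated cotorsion pair $(\cat{A} \cap \class{W}, \dwclass{I}) = (\class{W}, \dwclass{I})$ has the same right class as $(\class{W}_{\textnormal{co}}, \dwclass{I})$, hence the same left class, so $\class{W} = \class{W}_{\textnormal{co}}$. Thus $\class{M}^{inj}_{\textnormal{co}} = (All, \class{W}_{\textnormal{co}}, \dwclass{I})$ is a Hovey triple, and it is hereditary because its two associated cotorsion pairs are precisely the two hereditary pairs above.

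For the homotopy category I would apply Lemma~\ref{lemma-chain-homotopy}: the core of this triple is $\class{J}$, which is closed under suspensions, and $\class{W}_{\textnormal{co}}$ contains all contractible complexes, so that lemma yields an equivalence $\textnormal{Ho}(\class{M}^{inj}_{\textnormal{co}}) \simeq K(\dwclass{I}) = K(Inj)$, which by Theorem~\ref{them-hocat-triangulated} is even a triangle equivalence. The one genuinely substantive point is the core identification $\class{J} = \class{W}_{\textnormal{co}} \cap \dwclass{I}$ — recognizing the injective objects of $\ch$ as exactly the contractible complexes with injective components and reconciling this with the definition of coacyclicity; the rest is bookkeeping with the quoted results. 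As an alternative to Theorem~\ref{them-how to construct triples}, one could appeal to Hovey's correspondence (Theorem~\ref{them-hovey-correspondence}) directly, using that $\class{W}_{\textnormal{co}}$ is already known to be thick and that $(All, \class{J})$ and $(\class{W}_{\textnormal{co}}, \dwclass{I})$ supply the two required complete cotorsion pairs.
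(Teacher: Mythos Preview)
Your argument is correct. The identification of the homotopy category via Lemma~\ref{lemma-chain-homotopy} is exactly what the paper does. Where you differ is in the existence of the model structure itself: the paper simply cites the external constructions in~\cite[Cor.~4.4]{bravo-gillespie-hovey} and~\cite[Prop.~1.3.6]{becker}, whereas you supply a self-contained argument using the machinery developed earlier in the survey. Your route through Theorem~\ref{them-how to construct triples} works, but as you yourself observe at the end, it is slightly roundabout: once you have established that $\class{W}_{\textnormal{co}}$ is thick (which the paper asserts in the paragraph preceding the proposition) and that $\class{W}_{\textnormal{co}} \cap \dwclass{I} = \class{J}$, Hovey's correspondence (Theorem~\ref{them-hovey-correspondence}) applies directly without needing Theorem~\ref{them-how to construct triples} to manufacture the thick class for you. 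Either way, the substantive content is the core identification $\class{J} = \class{W}_{\textnormal{co}} \cap \dwclass{I}$, which you handle correctly using the description of $\class{W}_{\textnormal{co}}$ given in the text. The payoff of your approach is that a reader sees why the triple is a Hovey triple without chasing references; the paper's approach keeps the survey brisk.
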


\begin{proof}
This model was constructed in~\cite[Cor.~4.4]{bravo-gillespie-hovey} where it was called the \textbf{Inj model structure} and independently in~\cite[Prop.~1.3.6]{becker} where it was called the \textbf{coderived model structure}. It generalizes to Grothendieck categories; see~\cite{stovicek-purity} and~\cite{gillespie-injective models}. The fact that $\textnormal{Ho}(\class{M}^{inj}_{\textnormal{co}}) \cong K(Inj)$ follows from Lemma~\ref{lemma-chain-homotopy}.
\end{proof}

Stovicek has found an alternate model structure for the coderived category and uses it to show that $K(Inj)$ is compactly generated whenever $R$ is coherent. This comes from~\cite{stovicek-purity}. To describe it, let $\class{A}$ be the class of \emph{absolutely pure}, or \emph{FP-injective} modules. By definition, such modules $A$ satisfy $\Ext^1_{R}(F,A) = 0$ for all finitely presented $F$.
By Theorem~\ref{them-cogen-by-a-set}, the set of (isomorphism representatives of) all finitely presented modules cogenerates a complete cotorsion pair $(\class{C},\class{A})$. Stovicek shows it to be hereditary if and only if $R$ is coherent.  It lifts to two complete hereditary cotorsion pairs $(\leftperp{\dwclass{A}},\dwclass{A})$ and $(\dgclass{C},\tilclass{A})$, by~\cite{gillespie,gillespie-degreewise-model-strucs}. It is not difficult to argue that these two cotorsion pairs have the same core. It is the class of contractible complexes with components in $\class{C} \cap \class{A}$. Thus Theorem~\ref{them-how to construct triples} immediately produces a model structure $\class{M} = (\dgclass{C}, \class{W}, \dwclass{A})$ on $\ch$. It is not clear from Theorem~\ref{them-how to construct triples} exactly which complexes are in $\class{W}$. But Stovicek proves that $\class{W} = \class{W}_{\textnormal{co}}$, yielding the following theorem.

\begin{theorem}[\cite{stovicek-purity}, Theorem 6.12]\label{them-stovicek-model}
Let $R$ be any coherent ring. Then  $\class{M}^{abs}_{\textnormal{co}} = (\dgclass{C}, \class{W}_{\textnormal{co}}, \dwclass{A})$ is an hereditary abelian model structure on $\ch$. Again, $\class{W}_{\textnormal{co}}$ is exactly the class of coacyclic complexes. Consequently, we have a triangle equivalence $\textnormal{Ho}(\class{M}^{abs}_{\textnormal{co}}) \cong K(Inj)$.
\end{theorem}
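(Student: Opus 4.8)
The plan is to run everything through Theorem~\ref{them-how to construct triples}, so that the only substantive point left to prove is the identification $\class{W}=\class{W}_{\textnormal{co}}$. The ingredients are already assembled in the discussion above: $(\leftperp{\dwclass{A}},\dwclass{A})$ and $(\dgclass{C},\tilclass{A})$ are complete hereditary cotorsion pairs in $\ch$ --- completeness of the first from cogenerating it by $\{D^n(S)\}$ with $S$ running over the finitely presented modules and invoking Theorem~\ref{them-cogen-by-a-set}, completeness of the second from Gillespie's lift of the complete hereditary pair $(\class{C},\class{A})$, and heredity of both from the hypothesis that $R$ is coherent, which is exactly what makes $\class{A}$ closed under cokernels of monomorphisms --- their cores coincide and equal the class of contractible complexes with components in $\class{C}\cap\class{A}$, and $\tilclass{A}\subseteq\dwclass{A}$. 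Hence Theorem~\ref{them-how to construct triples} produces a unique thick class $\class{W}$ making $\class{M}^{abs}_{\textnormal{co}}=(\dgclass{C},\class{W},\dwclass{A})$ a Hovey triple, and since the two cotorsion pairs Hovey's correspondence associates to it are $(\dgclass{C},\tilclass{A})$ and $(\leftperp{\dwclass{A}},\dwclass{A})$, the model structure is hereditary. It remains to show (a) $\class{W}=\class{W}_{\textnormal{co}}$ and (b) $\textnormal{Ho}(\class{M}^{abs}_{\textnormal{co}})\cong K(Inj)$ as triangulated categories.

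For (a), I would first reduce to two homological facts about complexes of FP-injectives. By Theorem~\ref{them-how to construct triples} (or Proposition~\ref{prop-characterization of trivial objects}), $\class{W}$ is the smallest thick class containing the trivially cofibrant objects $\leftperp{\dwclass{A}}$ and the trivially fibrant objects $\tilclass{A}=\class{W}\cap\dwclass{A}$. Since every injective module is FP-injective, $\dwclass{I}\subseteq\dwclass{A}$, so $\leftperp{\dwclass{A}}\subseteq\leftperp{\dwclass{I}}=\class{W}_{\textnormal{co}}$; as $\class{W}_{\textnormal{co}}$ is thick, the inclusion $\class{W}\subseteq\class{W}_{\textnormal{co}}$ reduces to showing $\tilclass{A}\subseteq\class{W}_{\textnormal{co}}$, i.e.\ every exact complex with FP-injective cycles is coacyclic. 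For the reverse inclusion, let $W$ be coacyclic; using completeness of $(\leftperp{\dwclass{A}},\dwclass{A})$, choose a short exact sequence $0\xrightarrow{}W\xrightarrow{}A'\xrightarrow{}V'\xrightarrow{}0$ with $A'\in\dwclass{A}$ and $V'\in\leftperp{\dwclass{A}}\subseteq\class{W}_{\textnormal{co}}$. Then $A'$ is coacyclic by thickness of $\class{W}_{\textnormal{co}}$, hence exact, so $A'$ is a coacyclic complex with FP-injective components; if such a complex necessarily has FP-injective cycles, then $A'\in\tilclass{A}=\class{W}\cap\dwclass{A}\subseteq\class{W}$, whence $W\in\class{W}$ by thickness of $\class{W}$. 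So the whole of (a) rests on two assertions, each genuinely using coherence: (i) an exact complex with FP-injective cycles is coacyclic, and (ii) a coacyclic complex with FP-injective components has FP-injective cycles. I would prove these through the theory of purity, exploiting that over a coherent ring the class $\class{A}$ of FP-injective modules is definable --- in particular closed under direct limits --- and that for a complex with FP-injective components coacyclicity is equivalent to pure acyclicity, which in turn is equivalent to having FP-injective cycles. This is the technical heart of the theorem and the step I expect to be the real obstacle; everything surrounding it is formal manipulation of complete cotorsion pairs and Hovey triples.

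For (b), once $\class{W}=\class{W}_{\textnormal{co}}$ is in hand, the triple $\class{M}^{abs}_{\textnormal{co}}=(\dgclass{C},\class{W}_{\textnormal{co}},\dwclass{A})$ and the Inj model structure $\class{M}^{inj}_{\textnormal{co}}=(All,\class{W}_{\textnormal{co}},\dwclass{I})$ of Proposition~\ref{prop-coderived} have the same class of trivial objects. By Proposition~\ref{prop-characterization of weak equivs} the weak equivalences of an abelian model structure depend only on this class, so the two model structures have the same weak equivalences and hence the same homotopy category, which is $K(Inj)$ by Proposition~\ref{prop-coderived}. It is a triangle equivalence because, by Theorem~\ref{them-hocat-triangulated} and the construction preceding it, the triangulation on $\textnormal{Ho}(\class{M})$ of a hereditary model structure is determined by $\class{W}$ alone --- the suspension of $X$ being extracted from any short exact sequence $0\xrightarrow{}X\xrightarrow{}W\xrightarrow{}\Sigma X\xrightarrow{}0$ with $W\in\class{W}$, and the exact triangles from pushouts along short exact sequences with middle term in $\class{W}$ --- so equal trivial classes yield the same triangulated structure. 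Alternatively, since the common core is closed under suspension and $\class{W}_{\textnormal{co}}$ contains every contractible complex, Lemma~\ref{lemma-chain-homotopy} identifies $\textnormal{Ho}(\class{M}^{abs}_{\textnormal{co}})$ with the full subcategory $K(\dgclass{C}\cap\dwclass{A})$ of $K(R)$, which must then be matched with $K(Inj)$.
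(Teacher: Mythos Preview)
Your proposal is correct and follows essentially the same route as the paper. Both accounts assemble the two complete hereditary cotorsion pairs $(\leftperp{\dwclass{A}},\dwclass{A})$ and $(\dgclass{C},\tilclass{A})$, apply Theorem~\ref{them-how to construct triples}, and isolate the identification $\class{W}=\class{W}_{\textnormal{co}}$ as the only nontrivial step; your assertions (i) and (ii) together amount exactly to the equality $\class{W}_{\textnormal{co}}\cap\dwclass{A}=\tilclass{A}$ that the paper singles out as ``critical to completing the proof,'' and your appeal to purity matches the paper's reference to \v{S}\v{t}ov\'{\i}\v{c}ek's argument via Crawley-Boevey's correspondence between pure exact and categorically flat complexes. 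Your derivation of the triangle equivalence from equality of trivial classes is likewise the intended one.
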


Critical to completing the proof of Theorem~\ref{them-stovicek-model} is showing that  $\class{W}_{\textnormal{co}} \cap \dwclass{A} = \tilclass{A}$. This can be viewed as the dual of an important result of Neeman that we will see shortly, just below. Stovicek's proof of this depends on a deeper understanding of pure exact complexes and utilizes a correspondence, due to Crawley-Boevey~\cite{crawley-boevey}, between pure exact complexes and categorically flat complexes. We refer the reader to~\cite[Prop.~6.11]{stovicek-purity} for details.

The main advantage to the model structure in Theorem~\ref{them-stovicek-model} is that it makes clear why $K(Inj)$ is compactly generated. The point is that the set of all spheres $S^n(F)$, where $F$ ranges through (a set of isomorphism representatives of) finitely presented modules, is a set of \emph{compact weak generators} for $\textnormal{Ho}(\class{M}^{abs}_{\textnormal{co}})$. It means that (i) Each functor $\Hom_{\textnormal{Ho}(\class{M}^{abs}_{\textnormal{co}})}(S^n(F),-)$ preserves coproducts, and, (ii) If $\Hom_{\textnormal{Ho}(\class{M}^{abs}_{\textnormal{co}})}(S^n(F),X) = 0$ for each $S^n(F)$, then $X \in \class{W}_{\textnormal{co}}$.

\begin{corollary}
Let $R$ be a coherent ring. Then $\textnormal{Ho}(\class{M}^{abs}_{\textnormal{co}})$ is compactly generated by the set $\{S^n(F)\}$,  as $F$ ranges through (a set of isomorphism representatives of) the finitely presented modules. Consequently, $K(Inj)$ is also compactly generated.
\end{corollary}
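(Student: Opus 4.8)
The plan is to verify the two defining conditions for $\{S^n(F)\}_{F}$ to be a set of compact weak generators in the triangulated category $\textnormal{Ho}(\class{M}^{abs}_{\textnormal{co}})$, and then invoke the standard fact (see~\cite{neeman-book}) that a triangulated category with all coproducts admitting a \emph{set} of compact weak generators is compactly generated. By Theorem~\ref{them-stovicek-model} we already know $\textnormal{Ho}(\class{M}^{abs}_{\textnormal{co}}) \cong K(Inj)$, and since $\class{M}^{abs}_{\textnormal{co}}$ is hereditary, Theorem~\ref{them-hocat-triangulated} tells us the homotopy category is the stable category of the Frobenius category $\dgclass{C} \cap \dwclass{A}$ and is triangulated; moreover it has arbitrary coproducts because $\ch$ does and the model structure is cofibrantly generated (the cotorsion pair $(\dgclass{C},\tilclass{A})$ arises, via Theorem~\ref{them-cogen-by-a-set}, from a set). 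So the real content is the two bulleted properties (i) and (ii) in the paragraph preceding the corollary.

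First I would compute the morphism groups explicitly. Using Lemma~\ref{lemma-chain-homotopy} — whose hypotheses are met since the core of $\class{M}^{abs}_{\textnormal{co}}$ consists of contractible complexes with components in $\class{C} \cap \class{A}$, hence is closed under suspension, and $\class{W}_{\textnormal{co}}$ contains all contractible complexes — we get $$\textnormal{Ho}(\class{M}^{abs}_{\textnormal{co}})(S^n(F), X) \cong \Hom_{\cha{A}}(Q S^n(F), R X)/\!\sim,$$ the usual chain homotopy classes, where $RX \in \dwclass{A}$ is a fibrant replacement. Since $F$ is finitely presented, $S^n(F) \in \dgclass{C}$ already (a one-term complex with a module in $\class{C}$; note $F$ itself need not be in $\class{C}$, so here I would instead take a cofibrant replacement $QS^n(F)$, or — cleaner — observe that $S^n(F)$ is cofibrant in the \emph{projective} sense isn't needed; what I really want is to route through $K(Inj)$ directly). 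Concretely, under the identification with $K(Inj)$, and writing $iX$ for a DG-injective-type replacement inside $\dwclass{I}$, one has $\textnormal{Ho}(\class{M}^{abs}_{\textnormal{co}})(S^n(F), X) \cong K(R)(S^n(F), iX) \cong H^{-n}\big(\Hom_R(F, iX)\big)$, because maps out of a sphere into any complex compute cohomology of the Hom complex up to homotopy. This is the formula I will use for both (i) and (ii).

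For property (i), preservation of coproducts, the key point is that $F$ is finitely presented: the functor $\Hom_R(F,-)$ commutes with direct sums of modules, hence with direct sums of complexes degreewise, and therefore $H^{-n}\Hom_R(F,-)$ commutes with coproducts of complexes — provided the coproduct in $\textnormal{Ho}(\class{M}^{abs}_{\textnormal{co}})$ is computed by the levelwise direct sum of (fibrant/injective) representatives, which it is, since an arbitrary direct sum of complexes of injectives is again a complex of injectives ($R$ is coherent, so direct sums of absolutely pure modules are absolutely pure, and in fact for the $K(Inj)$ description one uses that in $\textnormal{Ho}(\class{M}^{abs}_{\textnormal{co}}) \cong K(Inj)$ the coproduct of complexes of injectives is their termwise sum). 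For property (ii), suppose $\textnormal{Ho}(\class{M}^{abs}_{\textnormal{co}})(S^n(F),X)=0$ for all $n$ and all finitely presented $F$; I must show $X \in \class{W}_{\textnormal{co}}$. By the formula this says $H^{-n}\Hom_R(F, iX) = 0$ for all $n$ and all finitely presented $F$, where $iX \in \dwclass{I}$. That is exactly the statement that the complex $iX$ is "$\Hom_R(F,-)$-acyclic" for every finitely presented $F$; combined with the fact that $iX$ has injective components, Stovicek's analysis (the ingredient $\class{W}_{\textnormal{co}} \cap \dwclass{A} = \tilclass{A}$ from Theorem~\ref{them-stovicek-model}, together with the characterization of $\tilclass{A}$ as exact complexes with absolutely pure cycles) forces $iX$, hence $X$, to be coacyclic. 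I would spell this out by first noting vanishing of all $H^{-n}\Hom_R(F,iX)$ with $F$ finitely presented implies $iX$ is exact with cycle modules $Z_k(iX)$ satisfying $\Ext^1_R(F, Z_k(iX)) = 0$ for all finitely presented $F$, i.e. $Z_k(iX) \in \class{A}$, so $iX \in \tilclass{A} = \class{W}_{\textnormal{co}} \cap \dwclass{A} \subseteq \class{W}_{\textnormal{co}}$; since $X \cong iX$ in $\textnormal{Ho}(\class{M}^{abs}_{\textnormal{co}})$ and $\class{W}_{\textnormal{co}}$ is the class of trivial objects (closed under the relevant equivalences), $X \in \class{W}_{\textnormal{co}}$.

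The main obstacle is property (ii), and specifically making rigorous the step "vanishing of $H^{-n}\Hom_R(F,iX)$ for all finitely presented $F$ and all $n$ implies $iX$ is exact with absolutely pure cycles." Exactness is routine (take $F = R$, which is finitely presented, recovering ordinary cohomology), but upgrading to the cycle modules lying in $\class{A}$ requires care: one needs that $\Hom_R(F,-)$ applied to the short exact sequences $0 \to Z_k(iX) \to iX_k \to Z_{k-1}(iX) \to 0$ stays exact for finitely presented $F$, which uses both the injectivity of $iX_k$ and the long exact sequence in $\Ext$, and this is where coherence of $R$ (ensuring $(\class{C},\class{A})$ is hereditary, so that $\class{A}$ is closed under cokernels of monomorphisms and the dimension-shifting is well behaved) is genuinely needed. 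Everything else — the coproduct computation, the reduction to $K(Inj)$, the final invocation of the standard compact generation criterion — is comparatively formal once Lemma~\ref{lemma-chain-homotopy} and Theorem~\ref{them-stovicek-model} are in hand.
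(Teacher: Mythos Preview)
Your approach is correct and is precisely what the paper intends: verify directly that the spheres $S^n(F)$ satisfy conditions (i) and (ii) for compact weak generation, using the computation $\textnormal{Ho}(\class{M}^{abs}_{\textnormal{co}})(S^n(F),X)\cong H_n\Hom_R(F,RX)$ with $RX$ a fibrant replacement.

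Two small corrections are worth making. First, a finitely presented module $F$ \emph{does} lie in $\class{C}=\leftperp{\class{A}}$, since by definition $\Ext^1_R(F,A)=0$ for every absolutely pure $A$; one then checks easily that $S^n(F)\in\dgclass{C}$, so $S^n(F)$ is already cofibrant in $\class{M}^{abs}_{\textnormal{co}}$ and no cofibrant replacement is needed. Second, and more importantly, your parenthetical that ``the coproduct of complexes of injectives is their termwise sum'' in $K(Inj)$ is false for non-Noetherian $R$: arbitrary direct sums of injectives need not be injective. This is exactly why the absolutely pure model $\class{M}^{abs}_{\textnormal{co}}$ is used instead of $\class{M}^{inj}_{\textnormal{co}}$; its fibrant class $\dwclass{A}$ \emph{is} closed under coproducts when $R$ is coherent, so coproducts in $\textnormal{Ho}(\class{M}^{abs}_{\textnormal{co}})$ are computed by the termwise direct sum of fibrant replacements in $\dwclass{A}$. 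Your earlier sentence to that effect is the correct argument for (i); the $K(Inj)$ aside should simply be deleted. With these fixes your argument for (ii) goes through cleanly using $RX\in\dwclass{A}$ in place of $iX\in\dwclass{I}$: the vanishing for $F=R$ gives exactness of $RX$, and then the long exact $\Ext$ sequence on $0\to Z_k\to (RX)_k\to Z_{k-1}\to 0$ with $(RX)_k\in\class{A}$ forces $Z_k\in\class{A}$, so $RX\in\tilclass{A}=\class{W}_{\textnormal{co}}\cap\dwclass{A}$.
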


We point out that the above results hold in the general setting of a locally coherent Grothendieck category. Again, see~\cite{stovicek-purity}. The fact that $K(Inj)$ is compactly generated when $R$ is Noetherian has been known since~\cite{krause-stable derived cat of a Noetherian scheme}.

\subsection{The contraderived category}
On the other hand, the canonical projective cotorsion pair $(\class{P},\class{A})$ in $R$-Mod lifts to a complete cotorsion pair $(\dwclass{P},\class{W}_{\textnormal{ctr}})$ in $\ch$. Following Notation~\ref{notation-complexes}, $\dwclass{P}$ is the class of all degreewise projective complexes and the complexes in $\class{W}_{\textnormal{ctr}}$ have been called \textbf{contraacyclic}. The class  $\class{W}_{\textnormal{ctr}}$ is also thick, and killing the contraacyclic complexes produces the \textbf{contraderived category}.  This time, the cotorsion pair $(\dwclass{P},\class{W}_{\textnormal{ctr}})$ is a \emph{projective cotorsion pair}, giving rise to a model structure in which all complexes are fibrant, as follows.

\begin{proposition}\label{prop-contraderived}
The triple $\class{M}^{proj}_{\textnormal{ctr}} = (\dwclass{P}, \class{W}_{\textnormal{ctr}}, All)$ is an hereditary abelian model structure on $\ch$. Its homotopy category, $\textnormal{Ho}(\class{M}_{\textnormal{ctr}})$, is called the \textbf{contraderived category} and it is equivalent to $K(Proj)$, the chain homotopy category of all complexes of projectives. That is, $\textnormal{Ho}(\class{M}^{proj}_{\textnormal{ctr}}) \cong K(Proj)$.
\end{proposition}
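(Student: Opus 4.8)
The plan is to mirror the proof of Proposition~\ref{prop-coderived} in dual form. First I would establish the cotorsion pair $(\dwclass{P},\class{W}_{\textnormal{ctr}})$ and its completeness. Since the category $\ch$ of chain complexes does in general have enough projectives (the categorically projective complexes, i.e. contractible complexes with projective components, suffice), one can take $\class{W}_{\textnormal{ctr}}$ to be the right Ext-orthogonal of $\dwclass{P}$. Completeness of this cotorsion pair was established in~\cite[Prop.~4.6]{gillespie-degreewise-model-strucs} (the projective dual of the coderived construction); alternatively, one can cite the fact that $(\dwclass{P},\class{W}_{\textnormal{ctr}})$ is a \emph{projective cotorsion pair} in the sense that $\dwclass{P}$ contains the projective objects and is closed under the relevant operations. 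Next I would note that $\class{W}_{\textnormal{ctr}}$ is thick: it is the right orthogonal of a class, hence automatically closed under extensions and retracts, and a short computation with the long exact sequence in $\Ext$ (using that $\dwclass{P}$ is closed under kernels of epimorphisms, since projectivity is a degreewise condition) gives the two-out-of-three property for short exact sequences. This makes $(\dwclass{P},\class{W}_{\textnormal{ctr}})$ a \emph{hereditary} projective cotorsion pair.

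Having a hereditary projective cotorsion pair, I would invoke Hovey's correspondence Theorem~\ref{them-hovey-correspondence} applied to the Hovey triple $(\dwclass{P},\class{W}_{\textnormal{ctr}},\class{A})$, where $\class{A}$ here is the class of all chain complexes (playing the role of ``All''). One must check this really is a Hovey triple: the cotorsion pair $(\dwclass{P}\cap\class{W}_{\textnormal{ctr}},\ \class{A})=(\dwclass{P}\cap\class{W}_{\textnormal{ctr}},\ \text{All})$ is the categorical cotorsion pair $(\class{P}',\class{A})$ where $\class{P}'$ is the class of projective objects of $\ch$ (the categorically projective complexes), and it is complete because $\ch$ has enough projectives; meanwhile $(\dwclass{P},\class{W}_{\textnormal{ctr}})$ is complete by the previous step, and these two cotorsion pairs share the core $\dwclass{P}\cap\class{W}_{\textnormal{ctr}}\cap\class{A}=\dwclass{P}\cap\class{W}_{\textnormal{ctr}}$, which equals the class of contractible complexes with projective components. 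Then $(\dwclass{P},\class{W}_{\textnormal{ctr}},\text{All})$ is a hereditary abelian model structure on $\ch$, and $\textnormal{Ho}$ of it is by definition the contraderived category.

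The remaining claim, $\textnormal{Ho}(\class{M}^{proj}_{\textnormal{ctr}})\cong K(Proj)$, follows from Lemma~\ref{lemma-chain-homotopy} exactly as in the coderived case. I would apply that lemma to the Hovey triple $(\dwclass{P},\class{W}_{\textnormal{ctr}},\text{All})$: its core $\dwclass{P}\cap\class{W}_{\textnormal{ctr}}\cap\text{All}$ is the class of contractible complexes with projective components, which is clearly closed under suspensions, and $\class{W}_{\textnormal{ctr}}$ contains all contractible complexes (any contractible complex is coacyclic-dual, i.e. its identity is null-homotopic, so it lies in the right orthogonal of every class of complexes closed under chain homotopy — in particular $\dwclass{P}$). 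Lemma~\ref{lemma-chain-homotopy} then gives $\textnormal{Ho}(\class{M}^{proj}_{\textnormal{ctr}})(X,Y)\cong\Hom_{\ch}(QX,RY)/\!\sim$ with $\sim$ the honest chain homotopy relation, and its last assertion identifies this with $K(\dwclass{P}\cap\text{All})=K(\dwclass{P})=K(Proj)$, the full subcategory of $K(R)$ on complexes of projectives (here $RY=Y$ since every complex is fibrant, and $QX\in\dwclass{P}$ is a degreewise-projective replacement).

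I expect the main obstacle to be the completeness of the cotorsion pair $(\dwclass{P},\class{W}_{\textnormal{ctr}})$. Unlike the injective side, where completeness of $(\class{W}_{\textnormal{co}},\dwclass{I})$ comes for free from Theorem~\ref{them-cogen-by-a-set} by cogenerating from the set $\{D^n(R/I)\}$, there is no analogous small-set argument producing \emph{enough} degreewise-projective objects on the left of a cotorsion pair — one cannot ``cogenerate'' the left-hand class of a cotorsion pair from a set in the same way. So this step must instead be supplied by an explicit construction of special $\dwclass{P}$-precovers and $\class{W}_{\textnormal{ctr}}$-preenvelopes, as carried out in~\cite[Section~4.2]{gillespie-degreewise-model-strucs} (or deduced from the fact that $\dwclass{P}$ is deconstructible / the pair is projective in the sense of Positselski). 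Once completeness is in hand, every other step is formal bookkeeping using Hovey's correspondence and Lemma~\ref{lemma-chain-homotopy}.
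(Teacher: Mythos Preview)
Your proposal is correct and follows essentially the same route as the paper, which merely cites \cite[Cor.~6.4]{bravo-gillespie-hovey} and \cite[Prop.~1.3.6]{becker} for the model structure and (implicitly, parallel to Proposition~\ref{prop-coderived}) invokes Lemma~\ref{lemma-chain-homotopy} for the identification with $K(Proj)$; you have simply unpacked what those references establish. One small caution: your sketch of thickness of $\class{W}_{\textnormal{ctr}}$ via the long exact sequence handles closure under extensions and under cokernels of monomorphisms, but the remaining two-out-of-three case (if $B,C\in\class{W}_{\textnormal{ctr}}$ then $A\in\class{W}_{\textnormal{ctr}}$) does not drop out of hereditariness alone---it uses either the identification $\Ext^1_{\ch}(P,-)\cong K(R)(P,\Sigma(-))$ for $P\in\dwclass{P}$, or completeness together with $\dwclass{P}\cap\class{W}_{\textnormal{ctr}}$ being the categorically projective complexes; this is exactly what the cited sources supply.
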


\begin{proof}
Again, this model was constructed in~\cite[Cor.~6.4]{bravo-gillespie-hovey} where it was called the \textbf{Proj model structure} and independently in~\cite[Prop.~1.3.6]{becker} where it was called the \textbf{contraderived model structure}.
\end{proof}

Unlike the above model $\class{W}^{inj}_{\textnormal{co}}$, for $K(Inj)$, the projective model $\class{M}^{proj}_{\textnormal{ctr}}$ does not generalize to all Grothendieck categories. The problem again is that a general Grothendieck category need not have enough projectives.  However, the original inspiration for Grothendieck categories was a general framework for working with categories of (quasi-coherent) sheaves. Such categories typically have a generating set consisting of flat sheaves, or equivalently, they ``have enough flat sheaves''. With this in mind, Neeman showed in~\cite{neeman-flat} that $K(Proj)$ is equivalent to $\class{D}(Flat)$, the derived category of all flat modules. By definition, this turns out to be the Verdier quotient of $K(Flat)$, the chain homotopy category of all complexes with flat components, by $\tilclass{F}$.    As in Notation~\ref{notation-complexes}, $\tilclass{F}$ is the thick subcategory consisting of all exact complexes $F$ with each cycle $Z_nF$ flat. The complexes in $\tilclass{F}$ are categorically flat in $\ch$; they are direct limits of finitely generated projective complexes~\cite{enochs-GR-flat-98}. This construction of $\class{D}(Flat) = K(Flat)/\tilclass{F}$ extends to quasi-coherent sheaf categories. The main motivation for this is that $\class{D}(Flat)$ replaces $K(Proj)$ and plays a key role in an extension of the classical Grothendieck duality theory. The details of this are nicely explained in the Introduction to Daniel Murfet's PhD thesis~\cite{murfet-thesis}, and the full details carried out throughout the thesis.

From the abelian model category point of view, this all suggests that there ought to be an abelian model structure on $\ch$ whose homotopy category is equivalent to $\class{D}(Flat) = K(Flat)/\tilclass{F}$. This was carried out in~\cite{gillespie-mock projectives}.  The construction is dual to the construction of the model in Theorem~\ref{them-stovicek-model}. In detail, letting $(\class{F},\class{C})$ denote Enochs' flat cotorsion pair, $(\dwclass{F}, \rightperp{\dwclass{F}})$  and $(\tilclass{F}, \dgclass{C})$ are each complete hereditary cotorsion pairs. See~\cite{gillespie,gillespie-degreewise-model-strucs}. One argues that each has the same core - the contractible complexes with components in $\class{F} \cap \class{C}$. Theorem~\ref{them-how to construct triples} immediately gives a model structure $\class{M} = (\dwclass{F}, \class{W},\dgclass{C})$. But the key is a nontrivial result of Neeman from~\cite{neeman-flat}. In Notation~\ref{notation-complexes}, it states that $\dwclass{F} \cap \class{W}_{\textnormal{ctr}} = \tilclass{F}$, and this implies $\class{W} = \class{W}_{\textnormal{ctr}}$. It proves the following theorem.

\begin{theorem}[\cite{gillespie-mock projectives}, Corollary 4.1]\label{them-contra-model}
Let $R$ be any ring. Then there is an hereditary abelian model structure $\class{M}^{flat}_{\textnormal{ctr}} = (\dwclass{F},  \class{W}_{\textnormal{ctr}},\dgclass{C})$ where $\class{W}_{\textnormal{ctr}}$ is precisely the class of contraacyclic complexes. Consequently, $\textnormal{Ho}(\class{M}^{flat}_{\textnormal{ctr}}) \cong K(Proj) \cong \class{D}(Flat)$.
\end{theorem}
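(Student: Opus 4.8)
\emph{Proof proposal.} The plan is to exhibit $\class{M}^{flat}_{\textnormal{ctr}}$ as an output of Theorem~\ref{them-how to construct triples} applied to two lifts of Enochs' flat cotorsion pair, and then to identify the resulting thick class with $\class{W}_{\textnormal{ctr}}$ using a nontrivial theorem of Neeman. First I would recall that the flat cotorsion pair $(\class{F},\class{C})$ in $R$-Mod is complete and hereditary, and that it lifts to the two complete hereditary cotorsion pairs $(\dwclass{F},\rightperp{\dwclass{F}})$ and $(\tilclass{F},\dgclass{C})$ in $\ch$; this is precisely the content of the lifting constructions in~\cite{gillespie,gillespie-degreewise-model-strucs}. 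To invoke Theorem~\ref{them-how to construct triples} I must verify its hypotheses for the pair of cotorsion pairs $(\class{Q},\tilclass{R})=(\dwclass{F},\rightperp{\dwclass{F}})$ and $(\tilclass{Q},\class{R})=(\tilclass{F},\dgclass{C})$: that $\tilclass{Q}\subseteq\class{Q}$, i.e. $\tilclass{F}\subseteq\dwclass{F}$, which is immediate since $\class{F}$ is closed under extensions so an exact complex with flat cycles has flat components; and that the two cores agree. For the core I would check that $\dwclass{F}\cap\rightperp{\dwclass{F}}$ and $\tilclass{F}\cap\dgclass{C}$ both coincide with the contractible complexes all of whose components lie in $\class{F}\cap\class{C}$ --- in one direction using that membership in $\rightperp{\dwclass{F}}$ (resp. in $\dgclass{C}$) forces $\Ext^1_{\cha{A}}(\Sigma X,X)=0$, hence $X$ is contractible by the standard fact cited in Lemma~\ref{lemma-chain-homotopy}; in the other direction noting that a disk $D^n(M)$ with $M\in\class{F}\cap\class{C}$ is orthogonal to $\dwclass{F}$ via the adjunction identifying chain maps into $D^n(M)$ with module maps out of the degree-$(n-1)$ component. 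Theorem~\ref{them-how to construct triples} then produces a unique thick class $\class{W}$ for which $(\dwclass{F},\class{W},\dgclass{C})$ is a Hovey triple, and it is hereditary because both associated cotorsion pairs are.

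The crux is the identification $\class{W}=\class{W}_{\textnormal{ctr}}$, and for this the essential input is Neeman's theorem from~\cite{neeman-flat}, which in the present notation reads $\dwclass{F}\cap\class{W}_{\textnormal{ctr}}=\tilclass{F}$. I would begin with two containments into $\class{W}_{\textnormal{ctr}}$: first $\tilclass{F}\subseteq\class{W}_{\textnormal{ctr}}$, which is immediate from Neeman's equality; and second $\rightperp{\dwclass{F}}\subseteq\class{W}_{\textnormal{ctr}}$, which holds because $\dwclass{P}\subseteq\dwclass{F}$ gives $\rightperp{\dwclass{F}}\subseteq\rightperp{\dwclass{P}}$, and $\rightperp{\dwclass{P}}=\class{W}_{\textnormal{ctr}}$ since $(\dwclass{P},\class{W}_{\textnormal{ctr}})$ is a cotorsion pair (as recorded before Proposition~\ref{prop-contraderived}). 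For $\class{W}\subseteq\class{W}_{\textnormal{ctr}}$: any $X\in\class{W}$ sits in a short exact sequence $0\to X\to R\to Q\to 0$ with $R\in\rightperp{\dwclass{F}}$ and $Q\in\tilclass{F}$ by the first characterization of $\class{W}$ in Theorem~\ref{them-how to construct triples}, and since $R$ and $Q$ both lie in the thick class $\class{W}_{\textnormal{ctr}}$, so does $X$. For the reverse inclusion: given $X\in\class{W}_{\textnormal{ctr}}$, use completeness of $(\dwclass{F},\rightperp{\dwclass{F}})$ to produce $0\to X\to R\to Q\to 0$ with $R\in\rightperp{\dwclass{F}}$ and $Q\in\dwclass{F}$; thickness of $\class{W}_{\textnormal{ctr}}$ together with $X,R\in\class{W}_{\textnormal{ctr}}$ forces $Q\in\class{W}_{\textnormal{ctr}}$, hence $Q\in\dwclass{F}\cap\class{W}_{\textnormal{ctr}}=\tilclass{F}$ by Neeman, and the same characterization in Theorem~\ref{them-how to construct triples} then puts $X\in\class{W}$. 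This proves $\class{W}=\class{W}_{\textnormal{ctr}}$, and in particular that $\class{W}_{\textnormal{ctr}}$ is exactly the class of contraacyclic complexes.

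For the homotopy category, the model structures $\class{M}^{flat}_{\textnormal{ctr}}=(\dwclass{F},\class{W}_{\textnormal{ctr}},\dgclass{C})$ and $\class{M}^{proj}_{\textnormal{ctr}}=(\dwclass{P},\class{W}_{\textnormal{ctr}},All)$ of Proposition~\ref{prop-contraderived} share the same class of trivial objects, so by Proposition~\ref{prop-characterization of weak equivs} they have the same weak equivalences; the Fundamental Theorem~\ref{them-fundamental}~(ii) then identifies both homotopy categories with the localization of $\ch$ at this common class of weak equivalences, so $\textnormal{Ho}(\class{M}^{flat}_{\textnormal{ctr}})\cong\textnormal{Ho}(\class{M}^{proj}_{\textnormal{ctr}})\cong K(Proj)$, and $K(Proj)\cong\class{D}(Flat)$ by Neeman~\cite{neeman-flat}. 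The one genuinely hard ingredient here is Neeman's equality $\dwclass{F}\cap\class{W}_{\textnormal{ctr}}=\tilclass{F}$ --- equivalently, that an acyclic complex of flat modules with flat cycles is contraacyclic --- which is cited rather than reproved; once it is in hand, everything else is a formal manipulation of cotorsion pairs and thick classes. The only other point demanding a little care is the coincidence of the two cores, since Theorem~\ref{them-how to construct triples} cannot even be invoked without it. A complete self-contained treatment along these lines appears in~\cite{gillespie-mock projectives}.
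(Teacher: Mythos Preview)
Your proposal is correct and follows precisely the approach the paper outlines in the paragraph preceding the theorem: lift Enochs' flat cotorsion pair to the two complete hereditary cotorsion pairs $(\dwclass{F},\rightperp{\dwclass{F}})$ and $(\tilclass{F},\dgclass{C})$, verify they share the common core of contractible complexes with components in $\class{F}\cap\class{C}$, apply Theorem~\ref{them-how to construct triples}, and then invoke Neeman's result $\dwclass{F}\cap\class{W}_{\textnormal{ctr}}=\tilclass{F}$ to identify the resulting thick class with $\class{W}_{\textnormal{ctr}}$. Your write-up simply supplies more of the routine details (the two-inclusion argument for $\class{W}=\class{W}_{\textnormal{ctr}}$ and the comparison with $\class{M}^{proj}_{\textnormal{ctr}}$ for the homotopy category) than the paper's sketch does.
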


There are some notable things that follow from the model category approach to $\class{D}(Flat)$. First, the theorem lifts the Neeman-Murfet homotopy category $\class{D}(Flat)$, from a localization of chain complexes with flat components, to a localization of the entire chain complex category! Second, it provides a definition of contraacyclic complexes, even in the absence of projective objects. Indeed a generalization of Theorem~\ref{them-contra-model} holds universally, with the only assumption on the scheme $X$ being that the category of quasi-coherent sheaves has a flat generator. See~\cite{gillespie-mock projectives} for full details.

The author does not claim that the model category approach makes \emph{everything} more transparent. For example, these model structures make it no more obvious that $K(Proj)$ is a compactly generated category whenever $R$ is coherent. But this was shown to be true by J\o rgensen in~\cite{jorgensen-proj}, Neeman in~\cite{neeman-flat}, and for the sheaf case, by Murfet in~\cite{murfet-thesis}.

\section{Models for the stable derived categories} \label{sec-stable-derived-models}

We can view the category $K(Inj)$ as being obtained from the derived category $\class{D}(R)$ via an attachment, or recollement, with another category called the (injective) stable derived category. This line of thought was introduced by Krause in~\cite{krause-stable derived cat of a Noetherian scheme}.  Similarly, $K(Proj)$ is obtained from $\class{D}(R)$ by a recollement with the projective stable derived category. In this section we describe the stable derived categories using abelian model structures. We give a model category description of the recollements in Section~\ref{sec-recollements}.

\subsection{The injective stable derived category}
Again, we let $R$ be a general ring. Its \emph{injective stable derived category}, which we will denote $K_{ex}(Inj)$, is the full subcategory of $K(Inj)$ consisting of all exact complexes. It was introduced by Krause in~\cite{krause-stable derived cat of a Noetherian scheme} in the context of a locally noetherian Grothendieck category with compactly generated derived category. Certainly $R$-Mod is such a category whenever $R$ is a Noetherian ring, and Krause showed $K_{ex}(Inj)$ to be a compactly generated triangulated category in this case. Moreover, he shows that the subcategory of compact objects of $K_{ex}(Inj)$ is up to direct summands equivalent to the singularity category $\class{D}^b(R\textnormal{-mod})/\class{D}(R)^c$. By definition, this is the Verdier quotient of the bounded derived category of finitely generated $R$-modules by the subcategory of compact objects in $\class{D}(R)$, which are the so-called \emph{perfect} complexes. Further details on this geometric motivation can be found in~\cite{krause-stable derived cat of a Noetherian scheme,becker,stovicek-purity}. Our focus here is on describing how $K_{ex}(Inj)$ can be recovered as the homotopy category of some hereditary abelian model structures.

An injective model structure for $K_{ex}(Inj)$ was constructed by Daniel Bravo in~\cite{bravo-thesis} and independently by Hanno Becker in~\cite{becker}. It also appears in~\cite{bravo-gillespie-hovey}. It can be described quite simply by the single cotorsion pair $(\class{V}_{\textnormal{inj}},\exclass{I})$, where following Notation~\ref{notation-complexes}, $\exclass{I}$ is the class of all exact complexes with injective components. This cotorsion pair goes back to~\cite[Prop.~4.6/Sec.~4.1]{gillespie-degreewise-model-strucs}, where it is shown that $(\class{V}_{\textnormal{inj}},\exclass{I})$ is cogenerated by the set $\{S^n(R)\}\cup\{D^n(R/I)\}$ where $I$ ranges through all (left) ideals of $R$. In light of Theorem~\ref{them-cogen-by-a-set} this shows the cotorsion pair to be complete, however, it is not shown there that $\class{V}_{\textnormal{inj}}$ is thick and that $\class{V}_{\textnormal{inj}} \cap \exclass{I}$ coincides with the class of all categorically injective complexes. This is made clear in each of~\cite{bravo-thesis,becker,bravo-gillespie-hovey} and proves the following result.

\begin{proposition}\label{prop-inj-stable}
The triple  $\class{M}^{inj}_{\textnormal{stb}} = (All, \class{V}_{\textnormal{inj}}, \exclass{I})$ is an hereditary abelian model structure on $\ch$, called the \textbf{exact Inj model structure}. We call its homotopy category, $\textnormal{Ho}(\class{M}^{inj}_{\textnormal{stb}})$, the \textbf{injective stable derived category} as it is equivalent to $K_{ex}(Inj)$, the chain homotopy category of all exact complexes of injectives. That is, $\textnormal{Ho}(\class{M}^{inj}_{\textnormal{stb}}) \cong K_{ex}(Inj)$.
\end{proposition}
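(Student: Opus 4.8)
The plan is to check that the triple $(All, \class{V}_{\textnormal{inj}}, \exclass{I})$ satisfies the hypotheses of Hovey's correspondence (Theorem~\ref{them-hovey-correspondence}), and then to read off the homotopy category from Lemma~\ref{lemma-chain-homotopy}. The one subtle point is that the orthogonal class $\class{V}_{\textnormal{inj}} = \leftperp{\exclass{I}}$ must be shown to be \emph{thick}, which a general orthogonal class need not be; I would obtain this, essentially for free, from Theorem~\ref{them-how to construct triples}. Throughout, let $\class{J}$ denote the class of categorically injective complexes, that is, the injective objects of $\ch$, equivalently the contractible complexes with injective components.

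First I would put on record two complete hereditary cotorsion pairs on $\ch$. One is the categorical injective cotorsion pair $(All,\class{J})$: it is complete because $\ch$ is a Grothendieck category, so has enough injectives, and it is hereditary because $All$ is trivially closed under kernels of epimorphisms while $\class{J}$ is closed under cokernels of monomorphisms (a quotient of an injective object by an injective subobject is a direct summand of the latter, hence injective). The other is the pair $(\class{V}_{\textnormal{inj}},\exclass{I})$ itself: completeness is the application of Theorem~\ref{them-cogen-by-a-set} to the generating set $\{S^n(R)\}\cup\{D^n(R/I)\}$ recorded just above from~\cite{gillespie-degreewise-model-strucs}, and it is hereditary because $\exclass{I}$ is closed under cokernels of monomorphisms between its members --- in each degree one gets a short exact sequence of injective modules, hence an injective cokernel, while the long exact homology sequence forces the cokernel complex to be exact; since $\ch$ has enough projectives and injectives, this one condition already yields heredity.

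Next I would identify the common core of these two pairs. A standard computation (resolving $M$ and applying the exact functor $D^n(-)$, whose values are projective complexes) gives $\Ext^1_{\ch}(D^n(M),Y)\cong\Ext^1_R(M,Y_n)$, which vanishes whenever $Y\in\exclass{I}$ because then $Y_n$ is injective; since every contractible complex is a coproduct of disk complexes and $\leftperp{(-)}$ is closed under coproducts, $\class{V}_{\textnormal{inj}}$ contains every contractible complex, in particular every member of $\class{J}$, so $\class{J}\subseteq\class{V}_{\textnormal{inj}}\cap\exclass{I}$. Conversely, if $W\in\class{V}_{\textnormal{inj}}\cap\exclass{I}$ then $W$ and $\Sigma W$ both lie in $\exclass{I}$, so $\Ext^1_{\ch}(\Sigma W,W)=0$; by the ``well known'' fact invoked in the proof of Lemma~\ref{lemma-chain-homotopy} (see~\cite[Lemma~2.1]{gillespie}) every map $W\to W$ is then null homotopic, so in particular $1_W$ is, and $W$ is a contractible complex with injective components, i.e.\ $W\in\class{J}$. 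Hence both cotorsion pairs have core exactly $\class{J}$, and since $\class{J}\subseteq\exclass{I}$ the hypotheses of Theorem~\ref{them-how to construct triples} are met. That theorem produces a unique thick class $\class{W}$ making $(All,\class{W},\exclass{I})$ a Hovey triple; feeding this triple back into Hovey's correspondence (Theorem~\ref{them-hovey-correspondence}) shows that $(All\cap\class{W},\exclass{I})=(\class{W},\exclass{I})$ is a cotorsion pair, and as the left-hand class of a cotorsion pair is determined by its right-hand class we conclude $\class{W}=\leftperp{\exclass{I}}=\class{V}_{\textnormal{inj}}$. Thus $\class{V}_{\textnormal{inj}}$ is thick, $\class{M}^{inj}_{\textnormal{stb}}=(All,\class{V}_{\textnormal{inj}},\exclass{I})$ is an abelian model structure, and it is hereditary because its two associated cotorsion pairs $(All,\class{J})$ and $(\class{V}_{\textnormal{inj}},\exclass{I})$ were seen to be hereditary.

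Finally, the identification $\textnormal{Ho}(\class{M}^{inj}_{\textnormal{stb}})\cong K_{ex}(Inj)$ is immediate from Lemma~\ref{lemma-chain-homotopy}: the core $\class{Q}\cap\class{W}\cap\class{R}=\class{J}$ is closed under suspensions, and $\class{W}=\class{V}_{\textnormal{inj}}$ contains all contractible complexes by the computation above, so that lemma --- together with Theorem~\ref{them-hocat-triangulated}, since the model is hereditary --- gives a triangle equivalence $\textnormal{Ho}(\class{M}^{inj}_{\textnormal{stb}})\cong K(\class{Q}\cap\class{R})$ with $\class{Q}\cap\class{R}=All\cap\exclass{I}=\exclass{I}$; and $K(\exclass{I})$, the full subcategory of $K(R)$ on the exact complexes with injective components, is by definition $K_{ex}(Inj)$. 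The main obstacle throughout is exactly the thickness of $\class{V}_{\textnormal{inj}}$ --- a priori a nontrivial closure property of an orthogonal class --- and routing it through Theorem~\ref{them-how to construct triples} plus the uniqueness of the left class of a cotorsion pair is the device that makes the argument short; everything else is bookkeeping with cotorsion pairs already on record.
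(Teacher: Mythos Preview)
Your proof is correct and, in fact, more self-contained than the paper's own treatment. The paper does not give an argument here: it records that $(\class{V}_{\textnormal{inj}},\exclass{I})$ is complete (cogenerated by $\{S^n(R)\}\cup\{D^n(R/I)\}$, from~\cite{gillespie-degreewise-model-strucs}) and then simply cites~\cite{bravo-thesis,becker,bravo-gillespie-hovey} for the two substantive facts --- that $\class{V}_{\textnormal{inj}}$ is thick and that $\class{V}_{\textnormal{inj}}\cap\exclass{I}$ equals the categorically injective complexes. Your route instead deduces both of these internally from Theorem~\ref{them-how to construct triples}: you verify the core of $(\class{V}_{\textnormal{inj}},\exclass{I})$ equals that of the categorical injective pair $(All,\class{J})$, invoke the theorem to manufacture a Hovey triple $(All,\class{W},\exclass{I})$, and then read off $\class{W}=\class{V}_{\textnormal{inj}}$ from the uniqueness of the left class of a cotorsion pair. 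This is a clean illustration of exactly the philosophy the paper advertises for Theorem~\ref{them-how to construct triples}, and it buys you thickness for free rather than by a direct closure argument.

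One small presentational point: in the core computation you write ``$W$ and $\Sigma W$ both lie in $\exclass{I}$, so $\Ext^1_{\ch}(\Sigma W,W)=0$''. The vanishing actually uses that $\Sigma W\in\class{V}_{\textnormal{inj}}$ (which holds because $\exclass{I}$ is shift-closed, hence so is its left orthogonal) together with $W\in\exclass{I}$; the fact that $\Sigma W\in\exclass{I}$ is not what drives it. The intended argument is clearly there, but the sentence as written suggests the wrong orthogonality.
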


Using this model structure one can deduce that the set of all spheres $S^n(F)$, where $F$ ranges through (a set of isomorphism representatives of) finitely generated modules, is a set of compact weak generators for $\textnormal{Ho}(\class{M}^{inj}_{\textnormal{stb}})$ in the case $R$ is Noetherian. However, for coherent rings that are not Noetherian we need an alternate model structure for $K_{ex}(Inj)$, one based on the absolutely pure modules, to reach this conclusion. With this in mind, recall from Section~\ref{sec-coderived-models} that we have the complete cotorsion pair $(\class{C},\class{A})$ where $\class{A}$ are the absolutely pure modules. It is hereditary if and only if $R$ is coherent. In this case, we obtain an hereditary complete cotorsion pair $(\leftperp{\exclass{A}},\exclass{A})$, by~\cite{gillespie-degreewise-model-strucs}. Applying Theorem~\ref{them-how to construct triples} to  $(\leftperp{\exclass{A}},\exclass{A})$ and $(\dgclass{C}, \tilclass{A})$ leads to the following model structure.

\begin{theorem}\label{them-exact-abs-model}
Let $R$ be a coherent ring. Then $\class{M}^{abs}_{\textnormal{stb}} = (\dgclass{C}, \class{V}_{\textnormal{inj}}, \exclass{A})$ is an hereditary abelian model structure on $\ch$. The class $\class{V}_{\textnormal{inj}}$ is exactly the same class as in Proposition~\ref{prop-inj-stable}. Consequently, we have a triangle equivalence $\textnormal{Ho}(\class{M}^{abs}_{\textnormal{stb}}) \cong K_{ex}(Inj)$.
\end{theorem}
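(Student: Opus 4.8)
The plan is to produce the model structure from Theorem~\ref{them-how to construct triples}, applied to the two complete hereditary cotorsion pairs $(\dgclass{C},\tilclass{A})$ and $(\leftperp{\exclass{A}},\exclass{A})$, and then to show that the trivial class $\class{W}$ it manufactures coincides with the class $\class{V}_{\textnormal{inj}}$ of Proposition~\ref{prop-inj-stable}. First I would check the three hypotheses of Theorem~\ref{them-how to construct triples}. Completeness and heredity of both pairs hold because, $R$ being coherent, $(\class{C},\class{A})$ is complete and hereditary, and this passes to the lifts $(\dgclass{C},\tilclass{A})$ and $(\leftperp{\exclass{A}},\exclass{A})$ by~\cite{gillespie,gillespie-degreewise-model-strucs}. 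The containment $\tilclass{A}\subseteq\exclass{A}$ is immediate: if $A\in\tilclass{A}$ then each $A_n$ is an extension of $Z_{n-1}A$ by $Z_nA$, both in $\class{A}$, and $\class{A}$ is closed under extensions; equivalently $\leftperp{\exclass{A}}\subseteq\leftperp{\tilclass{A}}=\dgclass{C}$. Finally the two pairs have a common core, namely the contractible complexes with components in $\class{C}\cap\class{A}$, exactly as in the parallel situation underlying Theorem~\ref{them-stovicek-model}: a complex in either core is exact with components in $\class{C}\cap\class{A}$ and satisfies $\Ext^1_{\ch}(\Sigma X,X)=0$, hence is contractible by the fact cited in the proof of Lemma~\ref{lemma-chain-homotopy}; conversely such a contractible complex splits as a sum of disks $D^{n+1}(Z_nX)$, and a routine $\Ext$-computation with disk complexes returns it to both cores. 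Theorem~\ref{them-how to construct triples} then delivers the Hovey triple $(\dgclass{C},\class{W},\exclass{A})$; it is hereditary because its two associated cotorsion pairs are $(\dgclass{C},\tilclass{A})$ and $(\leftperp{\exclass{A}},\exclass{A})$, and it comes with the characterization of $\class{W}$ by short exact sequences $0\to X\to A\to Q\to 0$ with $A\in\tilclass{A}$ and $Q\in\leftperp{\exclass{A}}$.

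To identify $\class{W}$ with $\class{V}_{\textnormal{inj}}=\leftperp{(\exclass{I})}$, I would first record two easy inclusions into $\class{V}_{\textnormal{inj}}$: since every injective module is absolutely pure we have $\exclass{I}\subseteq\exclass{A}$, whence $\leftperp{\exclass{A}}\subseteq\leftperp{(\exclass{I})}=\class{V}_{\textnormal{inj}}$; and, using Stovicek's Theorem~\ref{them-stovicek-model} together with $\exclass{I}\subseteq\dwclass{I}$, $\tilclass{A}=\class{W}_{\textnormal{co}}\cap\dwclass{A}\subseteq\class{W}_{\textnormal{co}}=\leftperp{(\dwclass{I})}\subseteq\leftperp{(\exclass{I})}=\class{V}_{\textnormal{inj}}$. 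Now $\class{W}\subseteq\class{V}_{\textnormal{inj}}$ follows at once from thickness of $\class{V}_{\textnormal{inj}}$ (Proposition~\ref{prop-inj-stable}): any $W\in\class{W}$ sits in a short exact sequence $0\to W\to A\to Q\to 0$ with $A\in\tilclass{A}\subseteq\class{V}_{\textnormal{inj}}$ and $Q\in\leftperp{\exclass{A}}\subseteq\class{V}_{\textnormal{inj}}$. For the reverse inclusion, given $X\in\class{V}_{\textnormal{inj}}$, use completeness of $(\leftperp{\exclass{A}},\exclass{A})$ to write $0\to X\to Y\to Q\to 0$ with $Y\in\exclass{A}$ and $Q\in\leftperp{\exclass{A}}\subseteq\class{V}_{\textnormal{inj}}$; thickness of $\class{V}_{\textnormal{inj}}$ forces $Y\in\class{V}_{\textnormal{inj}}\cap\exclass{A}$, and if this last class equals $\tilclass{A}$ then this very sequence exhibits $X\in\class{W}$ via the characterization above.

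Everything thus reduces to the single identity $\class{V}_{\textnormal{inj}}\cap\exclass{A}=\tilclass{A}$, and this is where I expect the real difficulty to lie. The inclusion $\supseteq$ is free from the two easy inclusions already noted, but $\subseteq$ --- that an exact complex of absolutely pure modules which is $\Ext$-orthogonal to every exact complex of injectives must have absolutely pure cycles --- is the ``exact Inj'' counterpart of Stovicek's equality $\class{W}_{\textnormal{co}}\cap\dwclass{A}=\tilclass{A}$ used in Theorem~\ref{them-stovicek-model}, and it should be established by the same route: a careful analysis of pure exact complexes over a coherent ring and the correspondence between pure exact and categorically flat complexes of~\cite{crawley-boevey,stovicek-purity}. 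This step is the main obstacle; the rest of the argument is soft.

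Finally, with $\class{W}=\class{V}_{\textnormal{inj}}$ in hand, the model structure $\class{M}^{abs}_{\textnormal{stb}}=(\dgclass{C},\class{V}_{\textnormal{inj}},\exclass{A})$ and the exact Inj model structure $\class{M}^{inj}_{\textnormal{stb}}=(All,\class{V}_{\textnormal{inj}},\exclass{I})$ of Proposition~\ref{prop-inj-stable} share the same class of trivial objects, hence by Proposition~\ref{prop-characterization of weak equivs} the same weak equivalences; so by the Fundamental Theorem~\ref{them-fundamental}(ii) the identity functor on $\ch$ induces a canonical equivalence $\textnormal{Ho}(\class{M}^{abs}_{\textnormal{stb}})\cong\textnormal{Ho}(\class{M}^{inj}_{\textnormal{stb}})$. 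Since both model structures are hereditary and the triangulated structure supplied by Theorem~\ref{them-hocat-triangulated} is determined by the common class $\class{V}_{\textnormal{inj}}$ (the shift being computed from short exact sequences with middle term in $\class{V}_{\textnormal{inj}}$, exact triangles from the corresponding pushouts), this equivalence is a triangle equivalence, and composing with Proposition~\ref{prop-inj-stable} yields $\textnormal{Ho}(\class{M}^{abs}_{\textnormal{stb}})\cong K_{ex}(Inj)$.
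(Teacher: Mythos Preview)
Your proposal follows exactly the route the paper indicates: apply Theorem~\ref{them-how to construct triples} to the pairs $(\leftperp{\exclass{A}},\exclass{A})$ and $(\dgclass{C},\tilclass{A})$, then identify the resulting thick class $\class{W}$ with $\class{V}_{\textnormal{inj}}$. The paper, being a survey, only sketches this and does not supply the identification $\class{W}=\class{V}_{\textnormal{inj}}$ in detail; your reduction of that step to the equality $\class{V}_{\textnormal{inj}}\cap\exclass{A}=\tilclass{A}$ is correct and is precisely the ``exact'' analogue of the key input $\class{W}_{\textnormal{co}}\cap\dwclass{A}=\tilclass{A}$ that the paper highlights (just after Theorem~\ref{them-stovicek-model}) as the crux of Stovicek's argument.
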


Using Theorem~\ref{them-exact-abs-model} one can show that $K_{ex}(Inj)$ is compactly generated whenever $R$ is coherent. Again, it is the set of all spheres $S^n(F)$, where $F$ ranges through (a set of isomorphism representatives of) finitely presented modules, that provides a set of compact weak generators for $\textnormal{Ho}(\class{M}^{inj}_{\textnormal{stb}})$.  

\subsection{The projective stable derived category}

Dual to the injective stable derived category, is the projective stable derived category $K_{ex}(Proj)$. It is the full subcategory of $K(Proj)$ consisting of all exact complexes of projectives. Letting $\exclass{P}$ denote the class of all exact complexes of projectives, it is shown in both~\cite{bravo-gillespie-hovey} and~\cite{becker} that we have an hereditary cotorsion pair $(\exclass{P}, \class{V}_{\textnormal{prj}})$. The class $\class{V}_{\textnormal{prj}}$ is thick and $\exclass{P} \cap \class{V}_{\textnormal{prj}}$ coincides with the class of all categorically projective complexes. This proves the following result.

\begin{proposition}\label{prop-proj-stable}
The triple $\class{M}^{proj}_{\textnormal{stb}} = (\exclass{P}, \class{V}_{\textnormal{prj}}, All)$  is an hereditary abelian model structure on $\ch$, we call the \textbf{exact Proj model structure}. We call its homotopy category, $\textnormal{Ho}(\class{M}^{proj}_{\textnormal{stb}})$, the \textbf{projective stable derived category} and it is equivalent to $K_{ex}(Proj)$, the chain homotopy category of all exact complexes of projectives. That is, $\textnormal{Ho}(\class{M}^{proj}_{\textnormal{stb}}) \cong K_{ex}(Proj)$.
\end{proposition}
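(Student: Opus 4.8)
The plan is to follow the route dual to that of Proposition~\ref{prop-inj-stable}, taking as input the facts recalled just above: that $(\exclass{P},\class{V}_{\textnormal{prj}})$ is a complete hereditary cotorsion pair in $\ch$, that $\class{V}_{\textnormal{prj}}$ is thick, and that $\exclass{P}\cap\class{V}_{\textnormal{prj}}$ is exactly the class of categorically projective complexes, i.e.\ the projective objects of $\ch$.

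First I would produce the Hovey triple by a direct appeal to Hovey's Correspondence, Theorem~\ref{them-hovey-correspondence}, with $\class{Q}=\exclass{P}$, $\class{W}=\class{V}_{\textnormal{prj}}$ and $\class{R}=All$. Thickness of $\class{W}$ and completeness of $(\class{Q},\class{W}\cap\class{R})=(\exclass{P},\class{V}_{\textnormal{prj}})$ are given. For the other required cotorsion pair, $(\class{Q}\cap\class{W},\class{R})=(\exclass{P}\cap\class{V}_{\textnormal{prj}},All)$, one observes that $\exclass{P}\cap\class{V}_{\textnormal{prj}}$ is precisely the class of projective objects of $\ch$; since $\ch$ has enough projectives, this is the complete categorical cotorsion pair of $\ch$ whose right-hand class is all complexes. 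Hovey's Correspondence then yields the abelian model structure $\class{M}^{proj}_{\textnormal{stb}}=(\exclass{P},\class{V}_{\textnormal{prj}},All)$.

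Next I would verify the hereditary condition, which by definition requires both associated cotorsion pairs to be hereditary. The pair $(\exclass{P},\class{V}_{\textnormal{prj}})$ is hereditary by hypothesis. For the pair $(\exclass{P}\cap\class{V}_{\textnormal{prj}},All)$ --- the categorical cotorsion pair of $\ch$ whose left class consists of the projective objects --- the right-hand class $All$ is trivially closed under cokernels of monomorphisms, while if $0\to K\to P'\to P\to 0$ is exact with $P',P$ projective in $\ch$ then it splits, so $K$ is a direct summand of $P'$ and hence projective. Thus $\class{M}^{proj}_{\textnormal{stb}}$ is hereditary, and Theorem~\ref{them-hocat-triangulated} applies: $\textnormal{Ho}(\class{M}^{proj}_{\textnormal{stb}})$ is triangulated and triangle-equivalent, via bifibrant replacement, to $(\exclass{P}\cap All)/\sim \, = \, \exclass{P}/\sim$.

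Finally I would identify this homotopy category with $K_{ex}(Proj)$ by invoking Lemma~\ref{lemma-chain-homotopy}. The core $\exclass{P}\cap\class{V}_{\textnormal{prj}}\cap All=\exclass{P}\cap\class{V}_{\textnormal{prj}}$ is closed under suspensions, since the suspension of a disk on a projective module is again a disk on a projective module. It remains to check that $\class{V}_{\textnormal{prj}}$ contains every contractible complex --- the one point that needs a small argument rather than bookkeeping. A contractible complex is, in each degree, a finite direct sum of the modules appearing in its disk summands, hence is simultaneously a direct sum and a direct product of disks $D^n(M)$; and for any $E\in\exclass{P}$ the adjunction isomorphism $\Hom_{\ch}(E,D^n(M))\cong\Hom_R(E_{n-1},M)$, together with exactness of the degree-$(n-1)$ functor, gives $\Ext^1_{\ch}(E,D^n(M))\cong\Ext^1_R(E_{n-1},M)=0$ because $E_{n-1}$ is projective. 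As the right-hand class of a cotorsion pair, $\class{V}_{\textnormal{prj}}$ is closed under products and direct summands, so it contains all contractible complexes. Lemma~\ref{lemma-chain-homotopy} then gives $\textnormal{Ho}(\class{M}^{proj}_{\textnormal{stb}})(X,Y)\cong\Hom_{\ch}(QX,RY)/\sim$ with $\sim$ the usual chain homotopy relation, and $\textnormal{Ho}(\class{M}^{proj}_{\textnormal{stb}})\cong K(\exclass{P}\cap All)=K_{ex}(Proj)$. The only genuine obstacle here is this last verification about contractible complexes; everything else is a routine transposition of the argument for the exact Inj model structure, combined with the cited cotorsion-pair facts.
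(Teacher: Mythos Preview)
Your proposal is correct and follows essentially the same route as the paper. The paper's own argument is merely the sentence preceding the proposition --- it cites \cite{bravo-gillespie-hovey} and \cite{becker} for the facts that $(\exclass{P},\class{V}_{\textnormal{prj}})$ is a complete hereditary cotorsion pair, that $\class{V}_{\textnormal{prj}}$ is thick, and that the core is the class of categorically projective complexes --- and then declares the proposition proved, with the identification $\textnormal{Ho}(\class{M}^{proj}_{\textnormal{stb}})\cong K_{ex}(Proj)$ left implicit via Lemma~\ref{lemma-chain-homotopy}; you have simply unpacked these steps, including the check that contractible complexes lie in $\class{V}_{\textnormal{prj}}$, which the paper does not spell out.
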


The geometric significance of $K_{ex}(Proj)$ is explained quite well in the Introduction to Murfet's thesis~\cite{murfet-thesis}. Recall from Section~\ref{sec-coderived-models} that $K(Proj) \cong \class{D}(Flat) = K(Flat)/\tilclass{F}$. Murfet showed that the full subcategory of $\class{D}(Flat)$ consisting of the exact complexes plays the roll of $K_{ex}(Proj)$ for non-affine schemes. To obtain this category as the homotopy category of an abelian model structure we again let $(\class{F},\class{C})$ denote Enochs' flat cotorsion pair. Then by~\cite{gillespie-degreewise-model-strucs} it lifts to two hereditary complete cotorsion pairs $(\exclass{F}, \rightperp{\exclass{F}})$  and $(\tilclass{F}, \dgclass{C})$.  Theorem~\ref{them-how to construct triples} applies yet again and immediately gives a model structure $\class{M} = (\exclass{F}, \class{V},\dgclass{C})$. One argues again that $\class{V} = \class{V}_{\textnormal{prj}}$ in the affine case, and this proves the following theorem.

\begin{theorem}\label{them-stable-flat-model}
Let $R$ be any ring. Then there is an hereditary abelian model structure $\class{M}^{flat}_{\textnormal{stb}} = (\exclass{F},  \class{V}_{\textnormal{prj}},\dgclass{C})$ where $\class{V}_{\textnormal{prj}}$ is the same class of trivial objects as in Proposition~\ref{prop-proj-stable}. Consequently, $\textnormal{Ho}(\class{M}^{flat}_{\textnormal{stb}}) \cong K_{ex}(Proj)$.
\end{theorem}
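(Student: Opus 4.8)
The plan is to run the same mechanism behind Theorems~\ref{them-stovicek-model} and~\ref{them-contra-model}, dualizing the proof of Theorem~\ref{them-contra-model}. Begin with Enochs' flat cotorsion pair $(\class{F},\class{C})$ in $R$-Mod, which is complete and hereditary. By the lifting results of~\cite{gillespie-degreewise-model-strucs} (cf.~\cite{gillespie}) it gives rise in $\ch$ to the two complete hereditary cotorsion pairs $(\exclass{F},\rightperp{\exclass{F}})$ and $(\tilclass{F},\dgclass{C})$, where in particular $\leftperp{\dgclass{C}}=\tilclass{F}$. Since $\class{F}$ is closed under extensions, every complex in $\tilclass{F}$ (exact, with flat cycles) has flat components, so $\tilclass{F}\subseteq\exclass{F}$; equivalently $\rightperp{\exclass{F}}\subseteq\dgclass{C}$, which is the nesting hypothesis $\tilclass{R}\subseteq\class{R}$ of Theorem~\ref{them-how to construct triples}.

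Next I would check that the two cotorsion pairs have a common core. Exactly as in the proof of Theorem~\ref{them-contra-model}, a short computation with disks and spheres shows that both $\exclass{F}\cap\rightperp{\exclass{F}}$ and $\tilclass{F}\cap\dgclass{C}$ equal the class of contractible complexes with all components in $\class{F}\cap\class{C}$: a complex in $\tilclass{F}\cap\dgclass{C}$ has $1$ null homotopic (it is a map out of $\tilclass{F}$ into $\dgclass{C}$), hence is contractible with components forced into $\class{F}$ and into $\class{C}$; conversely such a contractible complex is exact with cycles (direct summands of its components) in $\class{F}$, components in $\class{C}$, and receives only null-homotopic maps, so it lies in $\tilclass{F}$ and in $\dgclass{C}$, and likewise (arguing against $\exclass{F}$) in $\rightperp{\exclass{F}}$. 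Theorem~\ref{them-how to construct triples} now yields a unique thick class $\class{V}$ making $(\exclass{F},\class{V},\dgclass{C})$ a Hovey triple; it is automatically hereditary, and by construction $\exclass{F}\cap\class{V}=\tilclass{F}$ and $\class{V}\cap\dgclass{C}=\rightperp{\exclass{F}}$.

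The one non-formal step is to identify $\class{V}$ with the class $\class{V}_{\textnormal{prj}}$ of Proposition~\ref{prop-proj-stable}; this is where the genuine content of Neeman's~\cite{neeman-flat} enters, dual to the role played by Stovicek's pure-acyclicity theorem in Theorem~\ref{them-stovicek-model}. I would isolate the needed input as the equality
\[
\exclass{F}\cap\class{V}_{\textnormal{prj}}=\tilclass{F},
\]
that is, an acyclic complex of flat modules is trivial in the exact Proj model structure precisely when its cycles are flat (equivalently, when it is pure acyclic). One inclusion is formal, using $\exclass{P}\subseteq\exclass{F}$ together with $\tilclass{F}\subseteq\class{W}_{\textnormal{ctr}}\subseteq\class{V}_{\textnormal{prj}}$ from Theorem~\ref{them-contra-model}; the reverse inclusion is the substantive part carried over from Neeman's analysis of $K(Flat)$ and the kernel of $K(Flat)\to K(Proj)$. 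Granting the displayed equality, $\class{V}=\class{V}_{\textnormal{prj}}$ follows quickly. First, $\exclass{P},\tilclass{F}\subseteq\exclass{F}$ give $\rightperp{\exclass{F}}\subseteq\rightperp{\exclass{P}}=\class{V}_{\textnormal{prj}}$ and $\tilclass{F}\subseteq\class{V}_{\textnormal{prj}}$, so both the trivially cofibrant class $\tilclass{F}=\exclass{F}\cap\class{V}$ and the trivially fibrant class $\rightperp{\exclass{F}}=\class{V}\cap\dgclass{C}$ lie in the thick class $\class{V}_{\textnormal{prj}}$; since $\class{V}$ is the smallest thick class containing them (Theorem~\ref{them-how to construct triples}), $\class{V}\subseteq\class{V}_{\textnormal{prj}}$. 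Conversely, for $X\in\class{V}_{\textnormal{prj}}$, completeness of $(\exclass{F},\rightperp{\exclass{F}})$ gives a short exact sequence $0\to X\to V\to F\to 0$ with $V\in\rightperp{\exclass{F}}\subseteq\class{V}_{\textnormal{prj}}$ and $F\in\exclass{F}$; thickness of $\class{V}_{\textnormal{prj}}$ forces $F\in\exclass{F}\cap\class{V}_{\textnormal{prj}}=\tilclass{F}$, and then the characterization of $\class{V}$ in Theorem~\ref{them-how to construct triples} gives $X\in\class{V}$.

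Finally, with $\class{V}=\class{V}_{\textnormal{prj}}$ the hereditary abelian model structure $\class{M}^{flat}_{\textnormal{stb}}=(\exclass{F},\class{V}_{\textnormal{prj}},\dgclass{C})$ shares its trivial class with $\class{M}^{proj}_{\textnormal{stb}}=(\exclass{P},\class{V}_{\textnormal{prj}},All)$; since by Proposition~\ref{prop-characterization of weak equivs} the weak equivalences, hence the localization of $\ch$, depend only on the trivial class, Theorem~\ref{them-fundamental}~(ii) together with Proposition~\ref{prop-proj-stable} yield a triangle equivalence (triangulated by Theorem~\ref{them-hocat-triangulated}) $\textnormal{Ho}(\class{M}^{flat}_{\textnormal{stb}})\cong\textnormal{Ho}(\class{M}^{proj}_{\textnormal{stb}})\cong K_{ex}(Proj)$. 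The main obstacle is the displayed equality $\exclass{F}\cap\class{V}_{\textnormal{prj}}=\tilclass{F}$: everything else is cotorsion-pair bookkeeping on top of Theorem~\ref{them-how to construct triples}, but this identification is exactly the non-formal input supplied by~\cite{neeman-flat}.
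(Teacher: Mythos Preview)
Your proposal is correct and follows essentially the same approach as the paper: lift Enochs' flat cotorsion pair to the two complete hereditary cotorsion pairs $(\exclass{F},\rightperp{\exclass{F}})$ and $(\tilclass{F},\dgclass{C})$, check the common core, apply Theorem~\ref{them-how to construct triples}, and then identify the resulting thick class $\class{V}$ with $\class{V}_{\textnormal{prj}}$ using the non-formal input from Neeman~\cite{neeman-flat}. The paper only sketches this (``One argues again that $\class{V}=\class{V}_{\textnormal{prj}}$''), whereas you spell out the two inclusions $\class{V}\subseteq\class{V}_{\textnormal{prj}}$ and $\class{V}_{\textnormal{prj}}\subseteq\class{V}$ and isolate the key equality $\exclass{F}\cap\class{V}_{\textnormal{prj}}=\tilclass{F}$; this is exactly the intended argument, carried out in full in~\cite{gillespie-mock projectives}.
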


Again, the point is that Theorem~\ref{them-stable-flat-model} extends from the category of chain complexes of $R$-modules to the category of chain complexes of quasi-coherent sheaves. We only need  to assume that the category $\Qco(X)$, of quasi-coherent sheaves on a scheme $X$, has a flat generator. We refer the reader to~\cite{gillespie-mock projectives} for full details.

\section{Abelian model categories and recollements}\label{sec-recollements}

A recollement is an ``attachment'' or ``gluing'' of two triangulated categories. They were introduced in~\cite{BBD-perverse sheaves} and are now part of the lore of algebraic geometry and the theory of triangulated categories. Recall from Section~\ref{sec-triangulated} that the homotopy category of an hereditary abelian model structure is indeed a triangulated category.  In~\cite{becker}, Becker indicates how to obtain recollements via abelian model categories. This line of thought was pursued in much more detail by the current author. In this section we describe a main theorem which outputs a recollement from three interrelated hereditary Hovey triples. We then use it to recover the injective recollement of~\cite{krause-stable derived cat of a Noetherian scheme,becker,stovicek-purity} and the dual projective recollement of~\cite{jorgensen-proj,neeman-flat,murfet-thesis}. Loosely speaking, the projective recollement shows that the contraderived category is obtained by ``gluing'' the usual derived category to the projective stable derived category. The model category approach described here has been distilled  from~\cite{becker,gillespie-recollements,gillespie-recoll2,gillespie-mock projectives}.

\subsection{Definition of recollement}
We start with the definition of a recollement. Here we follow the definition from~\cite{krause-stable derived cat of a Noetherian scheme} which is based on localization and colocalization sequences.

\begin{definition}\label{def-localization sequence}
Let $\class{T}' \xrightarrow{F} \class{T} \xrightarrow{G} \class{T}''$ be a sequence of exact functors between triangulated categories. We say it is a \textbf{localization sequence} when there exists right adjoints $F_{\rho}$ and $G_{\rho}$ giving a diagram of functors as below with the listed properties.
$$\begin{tikzcd}
\class{T}'
\rar[to-,
to path={
([yshift=0.5ex]\tikztotarget.west) --
([yshift=0.5ex]\tikztostart.east) \tikztonodes}][swap]{F}
\rar[to-,
to path={
([yshift=-0.5ex]\tikztostart.east) --
([yshift=-0.5ex]\tikztotarget.west) \tikztonodes}][swap]{F_{\rho}}
& \class{T}
\rar[to-,
to path={
([yshift=0.5ex]\tikztotarget.west) --
([yshift=0.5ex]\tikztostart.east) \tikztonodes}][swap]{G}
\rar[to-,
to path={
([yshift=-0.5ex]\tikztostart.east) --
([yshift=-0.5ex]\tikztotarget.west) \tikztonodes}][swap]{G_{\rho}}
& \class{T}'' \\
\end{tikzcd}$$
\begin{enumerate}
\item The right adjoint $F_{\rho}$ of $F$ satisfies $F_{\rho} \circ F \cong \text{id}_{\class{T}'}$.
\item The right adjoint $G_{\rho}$ of $G$ satisfies $G \circ G_{\rho} \cong \text{id}_{\class{T}''}$.
\item For any object $X \in \class{T}$, we have $GX = 0$ iff $X \cong FX'$ for some $X' \in \class{T}'$.
\end{enumerate}
A \textbf{colocalization sequence} is the dual. That is, there must exist left adjoints $F_{\lambda}$ and $G_{\lambda}$ with the analogous properties.
\end{definition}

One may think of a localization sequence as a sequence of left adjoints which ``splits'' at the level of triangulated categories. See~\cite[Section~3]{krause-localization theory for triangulated categories} for the first properties of localization sequences which reflect this statement. Similarly, a colocalization sequence is a sequence of right adjoints with this property. It is true that if  $\class{T}' \xrightarrow{F} \class{T} \xrightarrow{G} \class{T}''$ is a localization sequence then  $\class{T}'' \xrightarrow{G_{\rho}} \class{T} \xrightarrow{F_{\rho}} \class{T}'$ is a colocalization sequence and if  $\class{T}' \xrightarrow{F} \class{T} \xrightarrow{G} \class{T}''$ is a colocalization sequence then  $\class{T}'' \xrightarrow{G_{\lambda}} \class{T} \xrightarrow{F_{\lambda}} \class{T}'$ is a localization sequence. This brings us to the definition of a recollement where the sequence of functors  $\class{T}' \xrightarrow{F} \class{T} \xrightarrow{G} \class{T}''$ is a localization sequence ``glued'' to a colocalization sequence.

\begin{definition}\label{def-recollement}
Let $\class{T}' \xrightarrow{F} \class{T} \xrightarrow{G} \class{T}''$ be a sequence of exact functors between triangulated categories. We say $\class{T}' \xrightarrow{F} \class{T} \xrightarrow{G} \class{T}''$ induces a \textbf{recollement} if it is both a localization sequence and a colocalization sequence as shown in the picture.
\[
\xy
(-20,0)*+{\class{T}'};
(0,0)*+{\class{T}};
(20,0)*+{\class{T}''};
{(-18,0) \ar^{F} (-2,0)};
{(-2,0) \ar@/^1pc/@<0.5em>^{F_{\rho}} (-18,0)};
{(-2,0) \ar@/_1pc/@<-0.5em>_{F_{\lambda}} (-18,0)};
{(2,0) \ar^{G} (18,0)};
{(18,0) \ar@/^1pc/@<0.5em>^{G_{\rho}} (2,0)};
{(18,0) \ar@/_1pc/@<-0.5em>_{G_{\lambda}} (2,0)};
\endxy
\]
\end{definition}

\subsection{Recollements from abelian model structures}
We now let $\cat{A}$ be any abelian category and assume $\class{M} = (\class{Q},\class{W},\class{R})$ is a Hovey triple. To describe the main theorem we will need to introduce some notation. We will denote the two associated cotorsion pairs above by $(\tilclass{Q},\class{R})$ and $(\class{Q},\tilclass{R})$. Note that there are four approximation sequences associated to $\class{M}$. In particular, using enough injectives of $(\tilclass{Q},\class{R})$ corresponds to the fibrant replacement functor denoted by $R$; see Lemma~\ref{lemma-cof-replacements}. On the other hand, using enough projectives of $(\class{Q},\tilclass{R})$ corresponds to the cofibrant replacement functor which we denote by $Q$. However, by using enough projectives of $(\tilclass{Q},\class{R})$ we also get a functor which we denote by $\widetilde{Q}$ and by using enough injectives of $(\class{Q},\tilclass{R})$ we get a functor we denote by $\widetilde{R}$. When we encounter multiple abelian model structures we use subscripts such as $\class{M}_1,\class{M}_2,\class{M}_3$ and denote these associated functors with notations such as $R_3$, $\widetilde{Q}_1$, $\widetilde{R}_2$ etc.

Finally, for an abelian model structure $\class{M} = (\class{Q},\class{W},\class{R})$ on $\cat{A}$, we let $\gamma : \cat{A} \xrightarrow{} \textnormal{Ho}(\cat{M})$ denote the canonical localization functor of Theorem~\ref{them-fundamental}. It takes weak equivalences to isomorphisms and is universally initial with respect to functors with this property. Using this we obtain the following lemma which is the last thing needed to state the main result.

\begin{lemma}\label{lemma-quotient map}
Let $\cat{A}$ be any abelian category. Assume $\class{M} = (\class{Q},\class{W},\class{R})$ and $\class{M}' = (\class{Q},\class{W}',\class{R}')$ are Hovey triples with the same cofibrant objects. If $\class{R}' \subseteq \class{R}$, then we have a canonical \textbf{quotient functor} $\bar{\gamma} : \textnormal{Ho}(\cat{M}) \xrightarrow{} \textnormal{Ho}(\cat{M}')$ for which $\gamma' = \bar{\gamma} \circ \gamma$.
\end{lemma}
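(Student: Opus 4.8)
The plan is to produce $\bar{\gamma}$ by appealing to the universal property of the localization $\gamma$. By the Fundamental Theorem~\ref{them-fundamental}~(ii), $\gamma \colon \cat{A} \xrightarrow{} \textnormal{Ho}(\class{M})$ is the identity on objects and is initial among functors out of $\cat{A}$ that invert the weak equivalences of $\class{M}$, and similarly $\gamma'$ is such a functor for $\class{M}'$. So it suffices to show that $\gamma'$ also inverts every weak equivalence of $\class{M}$; the resulting unique factorization $\gamma' = \bar{\gamma} \circ \gamma$ then produces $\bar{\gamma}$, which is automatically the identity on objects and hence deserves to be called a quotient functor. The reduction I want is therefore: every $\class{M}$-weak equivalence is an $\class{M}'$-weak equivalence, and by Proposition~\ref{prop-characterization of weak equivs} this follows once I know $\class{W} \subseteq \class{W}'$.

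To get $\class{W} \subseteq \class{W}'$ I would compare the associated cotorsion pairs. Write $(\class{Q}, \tilclass{R})$, $(\class{Q} \cap \class{W}, \class{R})$ for the cotorsion pairs of $\class{M}$ and $(\class{Q}, \tilclass{R}')$, $(\class{Q} \cap \class{W}', \class{R}')$ for those of $\class{M}'$, where $\tilclass{R} = \class{W} \cap \class{R}$ and $\tilclass{R}' = \class{W}' \cap \class{R}'$. Since a cotorsion pair is determined by either of its two classes and both model structures share the class $\class{Q}$ of cofibrant objects, we have $\tilclass{R} = \rightperp{\class{Q}} = \tilclass{R}'$. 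From $\class{R}' \subseteq \class{R}$, and since $\leftperp{(-)}$ reverses inclusions, we get $\class{Q} \cap \class{W} = \leftperp{\class{R}} \subseteq \leftperp{\class{R}'} = \class{Q} \cap \class{W}'$. Now apply Proposition~\ref{prop-characterization of trivial objects}: given $A \in \class{W}$, choose a short exact sequence $0 \xrightarrow{} A \xrightarrow{} V \xrightarrow{} W \xrightarrow{} 0$ with $V \in \class{W} \cap \class{R}$ and $W \in \class{Q} \cap \class{W}$. Since $\class{W} \cap \class{R} = \class{W}' \cap \class{R}'$ and $\class{Q} \cap \class{W} \subseteq \class{Q} \cap \class{W}'$, this same sequence witnesses, again via Proposition~\ref{prop-characterization of trivial objects}, that $A \in \class{W}'$. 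Hence $\class{W} \subseteq \class{W}'$, and by Proposition~\ref{prop-characterization of weak equivs} a morphism that factors as a monomorphism with cokernel in $\class{W}$ followed by an epimorphism with kernel in $\class{W}$ also has such a factorization with $\class{W}'$ in place of $\class{W}$, i.e. is an $\class{M}'$-weak equivalence. Consequently $\gamma'$ inverts all $\class{M}$-weak equivalences, and the universal property of $\gamma$ supplies the unique functor $\bar{\gamma}$ with $\gamma' = \bar{\gamma} \circ \gamma$.

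The argument is essentially formal, so I do not anticipate a genuine obstacle; the only points needing care are keeping the inclusions of the two pairs of cotorsion pairs straight — in particular noticing that the hypothesis $\class{R}' \subseteq \class{R}$ is exactly what forces $\class{Q} \cap \class{W} \subseteq \class{Q} \cap \class{W}'$ (and hence $\class{W} \subseteq \class{W}'$), while the hypothesis of equal cofibrant classes is what forces $\tilclass{R} = \tilclass{R}'$ — together with a reminder that the invocation of the universal property rests precisely on the localization statement of Theorem~\ref{them-fundamental}~(ii).
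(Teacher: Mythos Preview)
Your proposal is correct and follows essentially the same route as the paper's proof: establish $\class{W} \subseteq \class{W}'$ from the two observations $\class{W}\cap\class{R} = \class{W}'\cap\class{R}'$ and $\class{Q}\cap\class{W} \subseteq \class{Q}\cap\class{W}'$, then invoke Proposition~\ref{prop-characterization of weak equivs} and the universal property of $\gamma$. The only cosmetic difference is that you conclude $\class{W} \subseteq \class{W}'$ by quoting Proposition~\ref{prop-characterization of trivial objects} directly, whereas the paper instead writes down an approximation sequence from completeness of $(\class{Q},\class{W}'\cap\class{R}')$ and argues via thickness of $\class{W}$ and $\class{W}'$; these are interchangeable one-line arguments.
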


\begin{proof}
First we will show that $\class{W} \subseteq \class{W}'$. Indeed since $\class{R}' \subseteq \class{R}$, we have a containment of trivially cofibrant objects $\class{Q} \cap \class{W} \subseteq \class{Q} \cap \class{W}'$.  Similarly, since the two model structures share the same cofibrant objects we have equality of trivially fibrant objects $\class{W} \cap \class{R} = \class{W}' \cap \class{R}'$. Now let $W \in \class{W}$. We need to see why $W \in \class{W}'$. We write a short exact sequence $0 \xrightarrow{} W' \xrightarrow{} C \xrightarrow{} W \xrightarrow{} 0$ with $C \in \class{Q}$ and $W' \in \class{W}' \cap \class{R}'$. Then $W'$ must also be in $\class{W}$, and since $\class{W}$ is thick, we see that $C \in \class{Q} \cap \class{W} \subseteq \class{Q} \cap \class{W}'$. Since $\class{W}'$ is also thick we get $W \in \class{W}'$.

Now by Proposition~\ref{prop-characterization of weak equivs}, a map is a weak equivalence if and only if it factors as a monomorphism with trivial cokernel followed by an epimorphism with trivial kernel.
So since $\class{W} \subseteq \class{W}'$, the localization functor $\gamma' : \cat{A} \xrightarrow{} \textnormal{Ho}(\cat{M}')$ sends weak equivalences in $\class{M}$ to isomorphisms. So the universal property of $\gamma$ guarantees the unique functor  $\bar{\gamma} : \textnormal{Ho}(\cat{M}) \xrightarrow{} \textnormal{Ho}(\cat{M}')$ for which $\gamma' = \bar{\gamma} \circ \gamma$.

\end{proof}

We now state the main result.

\begin{theorem}[Right Recollement Theorem]\label{them-right recollement}
Let $\cat{A}$ be an abelian category with three hereditary model structures, as below, whose cores all coincide: $$\class{M}_1 = (\class{Q}, \class{W}_1, \class{R}_1) , \ \ \ \class{M}_2 = (\class{Q}, \class{W}_2, \class{R}_2) , \ \ \ \class{M}_3 = (\class{Q}, \class{W}_3, \class{R}_3).$$ If $\class{W}_3 \cap \class{R}_1 = \class{R}_2$ and $\class{R}_3 \subseteq  \class{R}_1$, then the sequence
$\textnormal{Ho}(\class{M}_2) \xrightarrow{R_2} \textnormal{Ho}(\class{M}_1) \xrightarrow{\bar{\gamma}} \textnormal{Ho}(\class{M}_3)$ induces a recollement:
\[
\begin{tikzpicture}[node distance=3.5cm]
\node (A) {$\textnormal{Ho}(\class{M}_2)$};
\node (B) [right of=A] {$\textnormal{Ho}(\class{M}_1)$};
\node (C) [right of=B] {$\textnormal{Ho}(\class{M}_3)$};
\draw[<-,bend left=40] (A.20) to node[above]{$Q_1$} (B.160);
\draw[->] (A) to node[above]{\small $R_2$} (B);
\draw[<-,bend right=40] (A.340) to node [below]{$\widetilde{Q}_3 \circ R_1$} (B.200);

\draw[<-,bend left] (B.20) to node[above]{\small $\widetilde{Q}_2$} (C.160);
\draw[->] (B) to node[above]{$\bar{\gamma}$} (C);
\draw[<-,bend right] (B.340) to node [below]{\small $R_3$} (C.200);
\end{tikzpicture}\]
Here, the functor $\bar{\gamma}$ is the quotient functor of Lemma~\ref{lemma-quotient map}.
\end{theorem}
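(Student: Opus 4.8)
The plan is to verify each of the structural pieces of a recollement in turn, using the Frobenius-category picture of Proposition~\ref{prop-hocat-Froben} and Theorem~\ref{them-hocat-triangulated} to handle the triangulated bookkeeping, so that all that remains is to identify the adjoints and check the three defining conditions of Definition~\ref{def-recollement}. First I would set up notation: since all three model structures share the cofibrant class $\class{Q}$ and have a common core $\omega = \class{Q}\cap\class{W}_i\cap\class{R}_i$, each $\textnormal{Ho}(\class{M}_i)$ is triangle-equivalent to $(\class{Q}\cap\class{R}_i)/\!\sim$, and the hypothesis $\class{R}_3\subseteq\class{R}_1$ together with $\class{R}_2 = \class{W}_3\cap\class{R}_1$ gives the nesting $\class{Q}\cap\class{R}_2\subseteq\class{Q}\cap\class{R}_1$ and $\class{Q}\cap\class{R}_3\subseteq\class{Q}\cap\class{R}_1$. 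From $\class{R}_2\subseteq\class{R}_1$ we get (by the argument in Lemma~\ref{lemma-quotient map}, with the roles suitably arranged) the containment $\class{W}_2\subseteq\class{W}_1\subseteq\class{W}_3$; the last containment uses $\class{R}_2 = \class{W}_3\cap\class{R}_1$ and thickness. These containments are what make all the functors in the diagram well-defined: $R_2$ and $R_3$ land in the appropriate bifibrant subcategories, $\bar\gamma$ is the quotient functor of Lemma~\ref{lemma-quotient map} applied to $\class{M}_1$ and $\class{M}_3$ (legitimate since $\class{R}_3\subseteq\class{R}_1$), and $\widetilde{Q}_2, \widetilde{Q}_3$ are the functors built from ``enough projectives'' of the cotorsion pairs $(\tilclass{Q}_2,\class{R}_2)$ and $(\tilclass{Q}_3,\class{R}_3)$.

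Next I would establish the \emph{localization sequence} $\textnormal{Ho}(\class{M}_2)\xrightarrow{R_2}\textnormal{Ho}(\class{M}_1)\xrightarrow{\bar\gamma}\textnormal{Ho}(\class{M}_3)$. The functor $R_2$, fibrant replacement in $\class{M}_2$, is exact (it preserves the Frobenius triangle structure by the Resolution-Lemma argument of Theorem~\ref{them-hocat-triangulated}) and fully faithful, because on the bifibrant level it is naturally isomorphic to the inclusion $(\class{Q}\cap\class{R}_2)/\!\sim\,\hookrightarrow(\class{Q}\cap\class{R}_1)/\!\sim$; here one checks that the homotopy relations agree, i.e.\ that $\class{Q}\cap\class{W}_2\cap\class{R}_2 = \omega = \class{Q}\cap\class{W}_1\cap\class{R}_1$ on these objects. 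Its right adjoint is $Q_1$ (cofibrant replacement in $\class{M}_1$), and $R_2 Q_1 R_2\cong R_2$ gives condition (1) of Definition~\ref{def-localization sequence}. For $\bar\gamma$, its right adjoint is $R_3$: by the Fundamental Theorem, $\textnormal{Hom}_{\textnormal{Ho}(\class{M}_3)}(\bar\gamma X, Z)\cong\textnormal{Hom}_{\cat{A}}(Q_3 X, R_3 Z)/\!\sim\,\cong\textnormal{Hom}_{\textnormal{Ho}(\class{M}_1)}(X, R_3 Z)$, using $Q_3 = Q_1$ on objects of $\class{Q}$ (same cofibrant class) and the compatibility of the homotopy relations. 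Then $\bar\gamma R_3\cong\text{id}$ is condition (2). For condition (3): $\bar\gamma X = 0$ in $\textnormal{Ho}(\class{M}_3)$ means $X\in\class{W}_3$; replacing $X$ by its bifibrant $\class{M}_1$-model $X'\in\class{Q}\cap\class{R}_1$, we have $X'\in\class{W}_3\cap\class{R}_1 = \class{R}_2$, so $X'\in\class{Q}\cap\class{R}_2$, i.e.\ $X\cong R_2 X'$ in $\textnormal{Ho}(\class{M}_1)$ for $X'\in\textnormal{Ho}(\class{M}_2)$; the converse is clear since $R_2 X'\in\class{W}_3$. Finally the essential image of $R_2$ is the kernel of $\bar\gamma$, as just shown.

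Then I would dualize to get the \emph{colocalization sequence}, which is the genuinely new work. One wants left adjoints: $\widetilde{Q}_3\circ R_1$ left adjoint to $R_2$, and $\widetilde{Q}_2$ left adjoint to $\bar\gamma$. The adjunctions are verified by the same Fundamental-Theorem $\textnormal{Hom}$-computation, now using the ``enough projectives'' approximation sequences $0\to R'\to \widetilde{Q}_i X\to X\to 0$ with $\widetilde{Q}_i X\in\tilclass{Q}_i$ and $R'\in\class{R}_i$, together with the hereditary hypothesis guaranteeing $\tilclass{Q}_i\cap\class{R}_j$ behaves well. The co-unit/unit isomorphisms $(\widetilde{Q}_3 R_1)R_2\cong\text{id}$ and $\bar\gamma\widetilde{Q}_2\cong\text{id}$, plus the kernel condition for the colocalization sequence (which is again $\ker\bar\gamma = \text{image of }R_2$, already established), complete the recollement. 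The main obstacle I expect is precisely the construction and adjointness of these left adjoints in the absence of projective objects in $\cat{A}$: one cannot literally mimic $R_2\dashv Q_1$ by duality, so the functors $\widetilde{Q}_2, \widetilde{Q}_3$ must be built from the cotorsion-theoretic approximation sequences and one must carefully check that the hereditary condition makes the relevant $\textnormal{Ext}$-vanishing hold so that the lifting arguments (as in the proof of Lemma~\ref{lemma-chain-homotopy}) go through; this is where the hypotheses ``$\class{W}_3\cap\class{R}_1 = \class{R}_2$'' and ``common core'' are really used, and verifying the naturality of all these identifications is the bulk of the argument.
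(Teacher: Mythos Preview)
Your overall strategy---reduce to the Frobenius picture via Theorem~\ref{them-hocat-triangulated}, then verify the localization and colocalization sequences by checking the three conditions of Definition~\ref{def-localization sequence}---is exactly the approach the paper sketches (referring to \cite{gillespie-mock projectives} for details). However, there are two concrete errors in your execution that would derail the argument.

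First, the containment $\class{W}_2\subseteq\class{W}_1$ is backwards. Applying Lemma~\ref{lemma-quotient map} with $\class{R}_2\subseteq\class{R}_1$ gives $\class{W}_1\subseteq\class{W}_2$, not the reverse; together with $\class{R}_3\subseteq\class{R}_1$ you get $\class{W}_1\subseteq\class{W}_2$ and $\class{W}_1\subseteq\class{W}_3$. Second, and more seriously, you have swapped the left and right adjoints of $R_2$. The paper's organization is that the containment $\class{R}_2\subseteq\class{R}_1$ produces the \emph{colocalization} sequence on the top of the diagram (left adjoints $Q_1$ and $\widetilde{Q}_2$), while $\class{R}_3\subseteq\class{R}_1$ produces the \emph{localization} sequence on the bottom (right adjoints $\widetilde{Q}_3\circ R_1$ and $R_3$). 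So $Q_1$ is the \emph{left} adjoint of $R_2$, and $\widetilde{Q}_3\circ R_1$ is the \emph{right} adjoint---the opposite of what you claim. A quick Hom-set check confirms this: for $X\in\class{Q}\cap\class{R}_2$ and $Y\in\class{Q}\cap\class{R}_1$, the Fundamental Theorem together with the common core gives $\Hom_{\textnormal{Ho}(\class{M}_2)}(Y,X)\cong\Hom_{\cat{A}}(Y,X)/\!\sim\,\cong\Hom_{\textnormal{Ho}(\class{M}_1)}(Y,R_2 X)$, which is the adjunction $Q_1\dashv R_2$; the other direction fails because computing $\Hom_{\textnormal{Ho}(\class{M}_2)}(X,Y)$ requires a genuine $\class{M}_2$-fibrant replacement of $Y$. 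Consequently your ``localization sequence'' pairs a left adjoint ($Q_1$) with a right adjoint ($R_3$), which is not a localization sequence at all. Once you swap these back, your sketch aligns with the paper: the hypothesis $\class{W}_3\cap\class{R}_1=\class{R}_2$ is precisely the ``glue'' that makes the kernel of $\bar\gamma$ coincide with the essential image of $R_2$, and the hereditary and common-core hypotheses are what make the approximation functors $\widetilde{Q}_2,\widetilde{Q}_3$ descend to exact functors between the homotopy categories.
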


We refer the reader to~\cite[Section~3]{gillespie-mock projectives} for  a detailed proof of Theorem~\ref{them-right recollement}. The main idea is that the containment $\class{R}_2 \subseteq \class{R}_1$ produces the colocalization sequence on the top of the diagram, while the containment $\class{R}_3 \subseteq \class{R}_1$ produces the localization sequence on the bottom of the diagram. The equality $\class{W}_3 \cap \class{R}_1 = \class{R}_2$ provides the ``glue''. The proof is entirely (model) categorical. So there is an equally useful dual which we also state.

\begin{theorem}[Left Recollement Theorem]\label{them-left recollement}
Let $\cat{A}$ be an abelian category with three hereditary model structures, as below, whose cores all coincide: $$\class{M}_1 = (\class{Q}_1, \class{W}_1, \class{R}) , \ \ \ \class{M}_2 = (\class{Q}_2, \class{W}_2, \class{R}) , \ \ \ \class{M}_3 = (\class{Q}_3, \class{W}_3, \class{R}).$$ If $\class{W}_3 \cap \class{Q}_1 = \class{Q}_2$ and $\class{Q}_3 \subseteq  \class{Q}_1$, then the sequence
$\textnormal{Ho}(\class{M}_2) \xrightarrow{Q_2} \textnormal{Ho}(\class{M}_1) \xrightarrow{\bar{\gamma}} \textnormal{Ho}(\class{M}_3)$ induces a recollement:
\[
\begin{tikzpicture}[node distance=3.5cm]
\node (A) {$\textnormal{Ho}(\class{M}_2)$};
\node (B) [right of=A] {$\textnormal{Ho}(\class{M}_1)$};
\node (C) [right of=B] {$\textnormal{Ho}(\class{M}_3)$};
\draw[<-,bend left=40] (A.20) to node[above]{$\widetilde{R}_3 \circ Q_1$} (B.160);
\draw[->] (A) to node[above]{\small $Q_2$} (B);
\draw[<-,bend right=40] (A.340) to node [below]{$R_1$} (B.200);

\draw[<-,bend left] (B.20) to node[above]{\small $Q_3$} (C.160);
\draw[->] (B) to node[above]{$\bar{\gamma}$} (C);
\draw[<-,bend right] (B.340) to node [below]{\small $\widetilde{R}_2$} (C.200);
\end{tikzpicture}\]
Here, the functor $\bar{\gamma}$ is the quotient functor of (the dual of) Lemma~\ref{lemma-quotient map}.
\end{theorem}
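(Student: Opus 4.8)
The plan is to deduce this from the Right Recollement Theorem~\ref{them-right recollement} by passing to the opposite category. The first step is to recall the self-duality of the theory: if $\class{M} = (\class{Q},\class{W},\class{R})$ is a hereditary Hovey triple on $\cat{A}$, then, since $\Ext^1_{\cat{A}^{\textnormal{op}}}(X,Y) = \Ext^1_{\cat{A}}(Y,X)$, the pair $\class{M}^{\textnormal{op}} = (\class{R},\class{W},\class{Q})$ is a hereditary Hovey triple on $\cat{A}^{\textnormal{op}}$ (thickness, completeness, and the hereditary condition are all self-dual, and the two cotorsion pairs of $\class{M}$ simply trade sides). The bifibrant replacement recipe of Lemma~\ref{lemma-cof-replacements} is likewise self-dual, so $\textnormal{Ho}(\class{M}^{\textnormal{op}})$ is canonically $\textnormal{Ho}(\class{M})^{\textnormal{op}}$; and because the Frobenius/stable-category picture of Theorem~\ref{them-hocat-triangulated} dualizes, this identification is a triangle equivalence (the shift on one side being the inverse shift on the other).

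Next I would transport the hypotheses. Applying the above to the three given model structures, the triples $\class{M}_i^{\textnormal{op}} = (\class{R},\class{W}_i,\class{Q}_i)$ on $\cat{A}^{\textnormal{op}}$ share the common cofibrant class $\class{R}$ and the common core, their fibrant classes are $\class{Q}_1,\class{Q}_2,\class{Q}_3$, so the hypothesis $\class{Q}_3 \subseteq \class{Q}_1$ becomes the required inclusion of fibrant classes and $\class{W}_3 \cap \class{Q}_1 = \class{Q}_2$ becomes the ``gluing'' equality of Theorem~\ref{them-right recollement}. Hence that theorem applies in $\cat{A}^{\textnormal{op}}$ and produces a recollement on $\textnormal{Ho}(\class{M}_2^{\textnormal{op}}) \to \textnormal{Ho}(\class{M}_1^{\textnormal{op}}) \to \textnormal{Ho}(\class{M}_3^{\textnormal{op}})$.

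The last step is to translate the output back to $\cat{A}$. Under the duality, ``using enough injectives'' of a cotorsion pair in $\cat{A}^{\textnormal{op}}$ is ``using enough projectives'' of its mirror in $\cat{A}$ and vice versa, so the four approximation functors swap in the expected way: $R^{\textnormal{op}} = Q$, $Q^{\textnormal{op}} = R$, $\widetilde{Q}^{\textnormal{op}} = \widetilde{R}$, $\widetilde{R}^{\textnormal{op}} = \widetilde{Q}$, and $\bar{\gamma}^{\textnormal{op}}$ is the quotient functor furnished by the dual of Lemma~\ref{lemma-quotient map}. Finally, dualizing a recollement keeps the central sequence pointing the same way but interchanges left and right adjoints, so the recollement obtained in $\cat{A}^{\textnormal{op}}$ becomes precisely the displayed one for $\textnormal{Ho}(\class{M}_2) \xrightarrow{Q_2} \textnormal{Ho}(\class{M}_1) \xrightarrow{\bar{\gamma}} \textnormal{Ho}(\class{M}_3)$: $Q_2$ acquires left adjoint $R_1$ and right adjoint $\widetilde{R}_3 \circ Q_1$, while $\bar{\gamma}$ acquires left adjoint $\widetilde{R}_2$ and right adjoint $Q_3$. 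There is no new substantive content here --- everything of weight lives in the purely categorical proof of Theorem~\ref{them-right recollement} --- so the only real obstacle is bookkeeping: correctly matching each of the six functors and both adjunction directions across the duality.
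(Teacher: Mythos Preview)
Your approach is correct and matches the paper's own treatment: the paper gives no separate proof of Theorem~\ref{them-left recollement} but simply remarks, after referring to \cite{gillespie-mock projectives} for the proof of Theorem~\ref{them-right recollement}, that ``the proof is entirely (model) categorical, so there is an equally useful dual which we also state.'' Your passage to $\cat{A}^{\textnormal{op}}$ is just an explicit way of packaging that duality, and your bookkeeping of the six functors under $R^{\textnormal{op}}=Q$, $\widetilde{Q}^{\textnormal{op}}=\widetilde{R}$, etc., together with the swap of left and right adjoints, reproduces exactly the displayed diagram.
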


\subsection{The recollements of Krause and Neeman-Murfet}

As applications of Theorems~\ref{them-right recollement} and~\ref{them-left recollement} we give model category interpretations of the recollements of Krause, from~\cite{krause-stable derived cat of a Noetherian scheme}, and the duals of Neeman and Murfet, from~\cite{neeman-flat,murfet-thesis}.

\begin{corollary}[Krause's recollement: \cite{stovicek-purity}, Theorem~7.7]\label{cor-krause-abs-recollement}
Assume $R$ is a coherent ring. Recall we have the following three model structures coming, respectively, from Theorem~\ref{them-stovicek-model}, Theorem~\ref{them-exact-abs-model}, and an application of Theorem~\ref{them-derivedcat-models}:
$$\class{M}^{abs}_{\textnormal{co}} = (\dgclass{C}, \class{W}_{\textnormal{co}}, \dwclass{A}) , \ \ \ \   \class{M}^{abs}_{\textnormal{stb}} = (\dgclass{C}, \class{V}_{\textnormal{inj}}, \exclass{A}) , \ \ \ \   \class{M}^{abs}_{\textnormal{der}} = (\dgclass{C}, \class{E}, \dgclass{A}).$$ Then they induce a recollement of compactly generated triangulated categories:
\[
\xy
(-30,0)*+{\textnormal{Ho}(\class{M}^{abs}_{\textnormal{stb}})};
(0,0)*+{\textnormal{Ho}(\class{M}^{abs}_{\textnormal{co}})};
(30,0)*+{\textnormal{Ho}(\class{M}^{abs}_{\textnormal{der}})};
{(-19,0) \ar (-10,0)};
{(-10,0) \ar@<0.5em> (-19,0)};
{(-10,0) \ar@<-0.5em> (-19,0)};
{(10,0) \ar (19,0)};
{(19,0) \ar@<0.5em> (10,0)};
{(19,0) \ar@<-0.5em> (10,0)};
\endxy
.\]
\end{corollary}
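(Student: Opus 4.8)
The plan is to deduce the corollary from the Right Recollement Theorem~\ref{them-right recollement}, applied with $\class{M}_1 = \class{M}^{abs}_{\textnormal{co}}$, $\class{M}_2 = \class{M}^{abs}_{\textnormal{stb}}$, and $\class{M}_3 = \class{M}^{abs}_{\textnormal{der}}$. By Theorems~\ref{them-stovicek-model},~\ref{them-exact-abs-model}, and~\ref{them-derivedcat-models} these are hereditary abelian model structures on $\ch$, and each is displayed with $\dgclass{C}$ as its class of cofibrant objects, so the standing hypothesis of Theorem~\ref{them-right recollement} is in force once we check three items: (a) the three cores coincide; (b) $\class{W}_3 \cap \class{R}_1 = \class{R}_2$, that is $\class{E} \cap \dwclass{A} = \exclass{A}$; and (c) $\class{R}_3 \subseteq \class{R}_1$, that is $\dgclass{A} \subseteq \dwclass{A}$.

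The heart of the matter is~(a), and the observation I would make is that all three Hovey triples carry one and the same cotorsion pair on the cofibrant side, namely $(\class{Q}, \class{W} \cap \class{R}) = (\dgclass{C}, \tilclass{A})$. For $\class{M}^{abs}_{\textnormal{der}}$ this is the identity $\class{E} \cap \dgclass{A} = \tilclass{A}$ built into Theorem~\ref{them-derivedcat-models}; for $\class{M}^{abs}_{\textnormal{co}}$ it is Stovicek's identity $\class{W}_{\textnormal{co}} \cap \dwclass{A} = \tilclass{A}$ recalled just after Theorem~\ref{them-stovicek-model}; and for $\class{M}^{abs}_{\textnormal{stb}}$ it is built in, since that model structure was produced by applying Theorem~\ref{them-how to construct triples} to the pairs $(\dgclass{C}, \tilclass{A})$ and $(\leftperp{\exclass{A}}, \exclass{A})$. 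Since the core of a Hovey triple is the core of either of its two associated cotorsion pairs, each of the three cores equals $\dgclass{C} \cap \tilclass{A}$ --- concretely, the class of contractible complexes with components in $\class{C} \cap \class{A}$ --- and in particular they agree.

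Conditions~(b) and~(c) are then immediate from Notations~\ref{notation-complexes-pairs} and~\ref{notation-complexes}: a complex lies in $\class{E} \cap \dwclass{A}$ precisely when it is exact with every component in $\class{A}$, which is the definition of $\exclass{A}$; and every complex in $\dgclass{A}$ has all components in $\class{A}$, so $\dgclass{A} \subseteq \dwclass{A}$. With all hypotheses verified, Theorem~\ref{them-right recollement} yields the recollement $\textnormal{Ho}(\class{M}^{abs}_{\textnormal{stb}}) \xrightarrow{R_2} \textnormal{Ho}(\class{M}^{abs}_{\textnormal{co}}) \xrightarrow{\bar{\gamma}} \textnormal{Ho}(\class{M}^{abs}_{\textnormal{der}})$, with the functors as specified there. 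Finally, that this is a recollement of compactly generated triangulated categories follows from the identifications already established: $\textnormal{Ho}(\class{M}^{abs}_{\textnormal{co}}) \cong K(Inj)$ and $\textnormal{Ho}(\class{M}^{abs}_{\textnormal{stb}}) \cong K_{ex}(Inj)$ are each compactly generated by the spheres $\{S^n(F)\}$ on finitely presented modules (the corollary after Theorem~\ref{them-stovicek-model} and the remark after Theorem~\ref{them-exact-abs-model}), while $\textnormal{Ho}(\class{M}^{abs}_{\textnormal{der}}) \cong \class{D}(R)$ is compactly generated by $S^0(R)$.

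I do not expect any individual verification to be genuinely hard; the substantial inputs --- Stovicek's purity analysis underlying $\class{W}_{\textnormal{co}} \cap \dwclass{A} = \tilclass{A}$, and the general Right Recollement Theorem~\ref{them-right recollement} itself --- are imported wholesale. The only real obstacle is the bookkeeping in step~(a): correctly identifying, among the several intersection identities with right-hand side $\tilclass{A}$ proved in earlier sections, the one that pins down the common cotorsion pair $(\dgclass{C}, \tilclass{A})$, and hence the common core, of the three Hovey triples.
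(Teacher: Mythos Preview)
Your proposal is correct and takes essentially the same approach as the paper: apply the Right Recollement Theorem~\ref{them-right recollement} to the three model structures, the key observation being that $\class{E} \cap \dwclass{A} = \exclass{A}$. The paper's proof is a single sentence invoking Theorem~\ref{them-right recollement} and noting only this identity, whereas you also spell out the verification of the common core and the containment $\dgclass{A} \subseteq \dwclass{A}$, plus the compact generation of the three homotopy categories---all correct and in line with what the paper establishes elsewhere, just more explicit than the paper's own one-line justification.
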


\begin{proof}
It follows immediately from Theorem~\ref{them-right recollement} since clearly $\class{E} \cap \dwclass{A} = \exclass{A}$.
\end{proof}

The above recollement comes from ``lifting'' the absolutely pure cotorsion pair $(\class{C},\class{A})$, which we recall is hereditary if and only if $R$ is coherent. For non-coherent rings we can even substitute for the absolutely pure modules the so called absolutely clean modules of Section~\ref{sec-stable-modules}; we refer the interested reader to~\cite{gillespie-injective models}. We should also note that the model structures and recollements of Corollary~\ref{cor-krause-abs-recollement} hold for quite general Grothendieck categories. Again, see~\cite{stovicek-purity,gillespie-injective models}.

On the other hand, we have the recollement of Neeman-Murfet. It relates the usual derived category to the projective stable derived category and the contraderived category. With the proper model structures already in place, we recover it from another easy application of the Left Recollement Theorem~\ref{them-left recollement}.

\begin{corollary}[Neeman-Murfet recollement: \cite{neeman-flat,murfet-thesis}]\label{cor-murfet-flat-recollement}
Let $R$ be any ring, or more generally, any scheme $X$ admitting a set of flat generators for $\Qco(X)$, the category of quasi-coherent sheaves on $X$. Recall we have the following three model structures coming, respectively, from Theorem~\ref{them-contra-model}, Theorem~\ref{them-stable-flat-model}, and an application of Theorem~\ref{them-derivedcat-models}:
$$\class{M}^{flat}_{\textnormal{ctr}} = (\dwclass{F},  \class{W}_{\textnormal{ctr}},\dgclass{C}) , \ \ \ \   \class{M}^{flat}_{\textnormal{stb}} = (\exclass{F},  \class{V}_{\textnormal{prj}},\dgclass{C}) , \ \ \ \   \class{M}^{flat}_{\textnormal{der}} = (\dgclass{F}, \class{E}, \dgclass{C}).$$ Then they induce a recollement of triangulated categories:
\[
\xy
(-30,0)*+{\textnormal{Ho}(\class{M}^{flat}_{\textnormal{stb}})};
(0,0)*+{\textnormal{Ho}(\class{M}^{flat}_{\textnormal{ctr}})};
(30,0)*+{\textnormal{Ho}(\class{M}^{flat}_{\textnormal{der}})};
{(-19,0) \ar (-10,0)};
{(-10,0) \ar@<0.5em> (-19,0)};
{(-10,0) \ar@<-0.5em> (-19,0)};
{(10,0) \ar (19,0)};
{(19,0) \ar@<0.5em> (10,0)};
{(19,0) \ar@<-0.5em> (10,0)};
\endxy
.\]
\end{corollary}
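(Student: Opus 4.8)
The plan is to obtain this corollary as a direct application of the Left Recollement Theorem~\ref{them-left recollement}, mirroring the way Corollary~\ref{cor-krause-abs-recollement} was deduced from the Right Recollement Theorem~\ref{them-right recollement}. First I would observe that all three model structures in the statement have the \emph{same} class of fibrant objects, namely $\dgclass{C}$, so they fit the template of Theorem~\ref{them-left recollement} with common fibrant class $\class{R} = \dgclass{C}$. Matching the notation, set $\class{M}_1 = \class{M}^{flat}_{\textnormal{ctr}}$, $\class{M}_2 = \class{M}^{flat}_{\textnormal{stb}}$, and $\class{M}_3 = \class{M}^{flat}_{\textnormal{der}}$, so that the relevant cofibrant classes are $\class{Q}_1 = \dwclass{F}$, $\class{Q}_2 = \exclass{F}$, and $\class{Q}_3 = \dgclass{F}$. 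Each of these is an hereditary model structure, by Theorems~\ref{them-contra-model},~\ref{them-stable-flat-model}, and~\ref{them-derivedcat-models} respectively. With these identifications, the sequence $\textnormal{Ho}(\class{M}_2) \xrightarrow{Q_2} \textnormal{Ho}(\class{M}_1) \xrightarrow{\bar{\gamma}} \textnormal{Ho}(\class{M}_3)$ produced by Theorem~\ref{them-left recollement} is exactly the sequence $\textnormal{Ho}(\class{M}^{flat}_{\textnormal{stb}}) \xrightarrow{} \textnormal{Ho}(\class{M}^{flat}_{\textnormal{ctr}}) \xrightarrow{} \textnormal{Ho}(\class{M}^{flat}_{\textnormal{der}})$ displayed in the statement.

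Next I would verify the hypotheses of Theorem~\ref{them-left recollement}. The containment $\class{Q}_3 \subseteq \class{Q}_1$ is immediate, since a DG-flat complex has flat components: $\dgclass{F} \subseteq \dwclass{F}$. The gluing equation $\class{W}_3 \cap \class{Q}_1 = \class{Q}_2$ reads $\class{E} \cap \dwclass{F} = \exclass{F}$, which holds essentially by definition (Notation~\ref{notation-complexes}): an exact chain complex with flat components is precisely an object of $\exclass{F}$. The remaining point is that the cores of the three Hovey triples coincide. For $\class{M}^{flat}_{\textnormal{ctr}}$ and $\class{M}^{flat}_{\textnormal{stb}}$ this is recorded in Sections~\ref{sec-coderived-models} and~\ref{sec-stable-derived-models}: in both cases the core is the class of contractible complexes with components in $\class{F} \cap \class{C}$. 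For $\class{M}^{flat}_{\textnormal{der}} = (\dgclass{F}, \class{E}, \dgclass{C})$ the core is $\dgclass{F} \cap \class{E} \cap \dgclass{C} = \tilclass{F} \cap \tilclass{C}$, using the identifications $\dgclass{F} \cap \class{E} = \tilclass{F}$ and $\class{E} \cap \dgclass{C} = \tilclass{C}$ from Theorem~\ref{them-derivedcat-models}; that is, the exact complexes whose cycles all lie in $\class{F} \cap \class{C}$. Since $(\class{F},\class{C})$ is a cotorsion pair, every short exact sequence $0 \xrightarrow{} Z_nX \xrightarrow{} X_n \xrightarrow{} Z_{n-1}X \xrightarrow{} 0$ with $Z_{n-1}X \in \class{F}$ and $Z_nX \in \class{C}$ splits, so such a complex is contractible with components $Z_nX \oplus Z_{n-1}X \in \class{F} \cap \class{C}$; conversely a contractible complex with components in $\class{F} \cap \class{C}$ has cycles that are direct summands of its components, hence again in $\class{F} \cap \class{C}$. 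So all three cores agree, and Theorem~\ref{them-left recollement} applies verbatim to produce the asserted recollement.

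I expect the only mildly delicate point to be this identification of the core of the derived model $\class{M}^{flat}_{\textnormal{der}}$ with the common core of the other two — essentially the splitting argument just indicated, together with the two identifications coming from Theorem~\ref{them-derivedcat-models}. Everything else is bookkeeping, primarily keeping straight which model structure plays the role of $\class{M}_1$, $\class{M}_2$, $\class{M}_3$ so that the recollement emerges in the direction pictured. Finally, for the last clause of the statement, I would note that Theorems~\ref{them-contra-model},~\ref{them-stable-flat-model} and~\ref{them-derivedcat-models} all extend from $\ch$ to the category of chain complexes of quasi-coherent sheaves on a scheme $X$ whenever $\Qco(X)$ has a flat generator (as remarked after those theorems), and that the Left Recollement Theorem~\ref{them-left recollement} is purely (model-)categorical; hence the entire argument transports unchanged to that setting.
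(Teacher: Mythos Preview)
Your proposal is correct and follows essentially the same approach as the paper: the paper explicitly says this corollary is ``another easy application of the Left Recollement Theorem~\ref{them-left recollement}'', and the dual Corollary~\ref{cor-krause-abs-recollement} is proved in one line by checking only the gluing equation $\class{E} \cap \dwclass{A} = \exclass{A}$. You in fact supply more detail than the paper does, particularly in verifying that all three cores coincide; the paper leaves this implicit.
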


When $R$ is a coherent ring, the three homotopy categories are compactly generated as shown by J\o rgensen~\cite{jorgensen-proj} and Neeman~\cite{neeman-flat}, and for the sheaf case, by Murfet in~\cite{murfet-thesis}. But again, unlike the injective case, there doesn't seem to be an easy model category interpretation of this.

\section{Stable module categories}\label{sec-stable-modules}

We described in Section~\ref{section-stable module cat} how St($R$), the stable category of a quasi-Frobenius ring $R$, has a generalization to Gorenstein rings. Indeed Theorem~\ref{them-Goren-models} says that the stable module category of a Gorenstein ring is the homotopy category of all Gorenstein injective modules, or equivalently, all Gorenstein projective modules. This is formalized by the existence of the Hovey triples $(All, \class{W},\class{GI})$ and $(\class{GP},\class{W}, All)$ where $\class{W}$ is the class of all modules of finite projective (equivalently, finite injective) dimension. For non Gorenstein rings, one can't expect to always have a class such as $\class{W}$, ``balancing'' the Gorenstein injectives and the Gorenstein projectives in two Hovey triples. But we end this survey by discussing how a refinement of the traditional Gorenstein injective and Gorenstein projective modules leads to two abelian model structures on $R$-Mod for a general ring $R$. That is, we will describe Hovey triples $\mathcal{S}^{inj} = (All, \class{W}_{\textnormal{inj}},\class{GI})$ and $\mathcal{S}^{proj} = (\class{GP},\class{W}_{\textnormal{proj}}, All)$ in $R$-Mod with the property that $\class{W}_{\textnormal{inj}} = \class{W} = \class{W}_{\textnormal{proj}}$ when $R$ is Gorenstein.
We think of $\textnormal{Ho}(\mathcal{S}^{inj})$ as the \emph{injective stable module category} of $R$ and $\textnormal{Ho}(\mathcal{S}^{proj})$ as the \emph{projective stable module category} of $R$. This is a summary of the paper~\cite{bravo-gillespie-hovey} which grew out of the thesis of Daniel Bravo~\cite{bravo-thesis}.


As we will see, the idea is based on the observation that the traditional notion of Gorenstein injective modules only seems well suited for Noetherian rings. But in a classic paper, \cite{stenstrom-fp}, Stenstr{\"o}m described how many results in homological algebra can be extended from Noetherian rings to coherent rings by replacing finitely generated modules with finitely presented modules. In the process, the usual injective modules get replaced by the absolutely pure modules (also called FP-injective modules). Based on this, Ding and Mao introduced a refinement of Gorenstein injective modules for coherent rings in~\cite{ding and mao 08}. As was suggested in~\cite{gillespie-Ding-Chen rings}, these modules are now often called \emph{Ding injective}, and they turn out to give a quite satisfactory extension of Gorenstein injective modules to coherent rings. In particular, the Ding injectives are the fibrant objects of an injective abelian model structure on $R$-Mod whenever $R$ is coherent, yet they coincide with the usual Gorenstein injectives when $R$ is Noetherian. But in~\cite{bravo-gillespie-hovey}, the philosophy of Stenstr{\"o}m is taken all the way to general rings. When relaxing from coherent rings to general rings, it is noticed that the finitely presented modules should be replaced with the modules of type $FP_{\infty}$. The process results in a new class of modules, the \emph{absolutely clean} modules replacing by the absolutely pure modules. This in turn leads to the definition of \emph{Gorenstein AC-injective} modules as a further refinement of the Ding injective modules. We will explain this further throughout this section, along with the  the projective analog.

\subsection{The absolutely clean modules}\label{sec-absclean}

A ring $R$ is (left) Noetherian if and only if every finitely generated (left) $R$-module $F$ has a projective resolution $\cdots \rightarrow P_2 \rightarrow P_1 \rightarrow P_0 \rightarrow F \rightarrow 0$ with each $P_i$ a finitely generated projective. The existence of such resolutions is the reason why injective modules over Noetherian rings enjoy so many nice homological properties. As an example, we recall the reason why injective modules over Noetherian rings are closed under direct sums. Indeed, by Baer's criterion, a module $A$ is injective if and only if $\Ext^1_R(F,A) = 0$ for each finitely generated $F$. So given a collection $\{A_{\alpha}\}$ of injective modules $A_{\alpha}$, and letting $P_* \rightarrow F \rightarrow 0$ denote a projective resolution of $F$ by finitely generated projectives, we compute:
$$\text{Ext}^1_R(F, \bigoplus A_{\alpha}) = H^1[\text{Hom}_R(P_*, \bigoplus A_{\alpha})] \cong H^1[\bigoplus \text{Hom}_R(P_*, A_{\alpha})]$$
$$\cong \bigoplus H^1[\text{Hom}_R(P_*, A_{\alpha})] \cong \bigoplus \text{Ext}^1_R(F, A_{\alpha}) = 0.$$
The computation is only valid since $\Hom_R(P_*,-)$ preserves direct sums when each $P_i$ is finitely generated. Now if $R$ is just (left) coherent it is precisely the finitely presented modules $F$ which enjoy the property of having projective resolutions $P_* \rightarrow F \rightarrow 0$ with each $P_i$ finitely generated. Thus the above computation holds whenever $F$ is finitely presented and $\{A_{\alpha}\}$ is a collection of absolutely pure (that is, FP-injective) modules. In other words, the same computation shows that absolutely pure modules are closed under direct sums whenever $R$ is coherent. This indicates Stenstr{\"o}m's philosophy from~\cite{stenstrom-fp}: As a ring is relaxed from Noetherian to coherent, the appropriate notion of ``finite'' module should sharpen from the finitely generated modules to the finitely presented modules.

This motivates some definitions. For a general ring $R$, we say an $R$-module $F$ is of \textbf{type $\boldsymbol{FP_{\infty}}$} if it admits a projective resolution by finitely generated projectives. So for Noetherian rings these are the finitely generated modules and for coherent rings these are the finitely presented modules. A first study of modules of type $FP_{\infty}$ appeared in~\cite{bieri}. Among other things, he shows that the modules of type $FP_{\infty}$ always form a thick subcategory. That is, for any ring $R$, they are closed under direct summands and whenever two out of three terms in a short exact sequence are of type $FP_{\infty}$, then so is the third.  Further homological properties of such modules were studied by Livia Hummel in~\cite{miller-livia}. They have also appeared in group representation theory; see~\cite{benson-infinite} and~\cite{kropholler}. Thinking of modules of type $FP_{\infty}$ as the correct notion of ``finite'' modules over general rings, we are then led to make some more natural  definitions.

\begin{definition}\label{def-abs-clean}
We call a short exact sequence $0 \rightarrow A \rightarrow B \rightarrow C \rightarrow 0$ \textbf{clean} if it remains exact after applying $\text{Hom}_R(F,-)$ for any $F$ of type $FP_{\infty}$. We say an $R$-module $A$ is \textbf{absolutely clean}, or $FP_{\infty}$-injective, if
$\Ext^1_R(F,A) = 0$ for each module $F$ of type $FP_{\infty}$.
\end{definition}

Using standard arguments with $\Ext$, it is easy to see that a module $A$ is $FP_{\infty}$-injective if and only if every short exact sequence $0 \rightarrow A \rightarrow B \rightarrow C \rightarrow 0$ is clean. Hence the name ``absolutely clean'', as this is completely analogous to how a module is \emph{FP-injective} if and only if it is \emph{absolutely pure}. The following summarizes the first results on these modules. Note that direct sums of absolutely clean modules are again absolutely clean by the same exact argument we gave above.



\begin{proposition}\label{prop-abs-clean}
Let $R$ be any ring.  Then the absolutely clean modules are closed under direct sums.
\begin{enumerate}
\item $R$ is (left) coherent if and only if the absolutely clean modules coincide with the absolutely pure modules.
\item $R$ is (left) Noetherian if and only if the absolutely clean modules coincide with the injective modules.
\end{enumerate}
\end{proposition}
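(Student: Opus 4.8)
The plan is to dispatch the closure statement and the two forward directions of (1) and (2) by elementary $\Ext$-bookkeeping, and to extract the two converses from classical characterizations of coherent and Noetherian rings. First I would record closure under direct sums exactly as in the computation displayed just before the proposition: if $F$ has type $FP_\infty$, then by definition $F$ admits a projective resolution $P_* \xrightarrow{} F \xrightarrow{} 0$ with every $P_i$ finitely generated, so $\Hom_R(P_*,-)$ commutes with arbitrary direct sums; hence $\Ext^1_R(F,\bigoplus_\alpha A_\alpha)\cong\bigoplus_\alpha\Ext^1_R(F,A_\alpha)$, which vanishes when each $A_\alpha$ is absolutely clean. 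Running the identical argument with direct limits in place of direct sums — using that a finitely generated projective module is finitely presented, so $\Hom_R(P_i,-)$ commutes with direct limits, together with the exactness of direct limits in a module category — yields the stronger fact, which I would record separately, that the absolutely clean modules are closed under direct limits over \emph{any} ring. These two closure properties are the engines of the two converses.

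Next, the forward directions. Since every module of type $FP_\infty$ is finitely presented and every finitely presented module is finitely generated, for any ring one has inclusions of classes: absolutely pure $\subseteq$ absolutely clean and injective $\subseteq$ absolutely clean (the latter because injective objects are $\Ext^1$-orthogonal to everything). If $R$ is coherent, then, as recalled in the discussion preceding the proposition, a module admits a projective resolution by finitely generated projectives exactly when it is finitely presented; thus the modules of type $FP_\infty$ are precisely the finitely presented modules, the defining $\Ext^1$-orthogonality conditions for ``absolutely clean'' and ``absolutely pure'' are literally the same, and the two classes agree. If $R$ is Noetherian, then, again as recalled above, the modules of type $FP_\infty$ are precisely the finitely generated modules; in particular every cyclic module $R/I$ has type $FP_\infty$, so an absolutely clean module $A$ satisfies $\Ext^1_R(R/I,A)=0$ for every left ideal $I$ and is therefore injective by Baer's criterion. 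With the trivial inclusion this gives absolutely clean $=$ injective.

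For the converse of (1), suppose the absolutely clean and absolutely pure modules coincide. Then the absolutely pure (i.e.\ FP-injective) modules inherit the closure under direct limits recorded in the first paragraph, and by Stenstr{\"o}m's theorem~\cite{stenstrom-fp} a ring whose FP-injective left modules are closed under direct limits is left coherent. For the converse of (2), suppose the absolutely clean and injective modules coincide. Then the injective modules inherit closure under direct sums, and by the classical Bass--Papp theorem (see, e.g.,~\cite{enochs-jenda-book}) a ring whose injective left modules are closed under direct sums is left Noetherian.

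The proof presents no serious obstacle; the only points needing care are the two converses. There one must (a) invoke the correct classical equivalences — closure of FP-injectives under direct limits characterizes coherence, closure of injectives under direct sums characterizes the Noetherian property — and (b) verify that the absolutely clean modules really are closed under direct limits and direct sums for an arbitrary ring. It is exactly in (b) that the definition of ``type $FP_\infty$'' matters in an essential way: one needs a projective resolution by finitely generated projectives, not merely a finite presentation, so that $\Hom_R(P_\bullet,-)$ commutes with the relevant colimits in every homological degree.
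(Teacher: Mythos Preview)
Your proof is correct. The survey does not give its own proof of this proposition beyond the displayed $\Ext$-computation for closure under direct sums (it refers to~\cite{bravo-gillespie-hovey} for the characterizations), so there is no detailed comparison to make; but your strategy for the converses---inheriting closure under direct limits (resp.\ direct sums) from the absolutely clean class and then invoking a classical characterization of coherence (resp.\ the Bass--Papp theorem for the Noetherian property)---exactly parallels the proof the paper \emph{does} give for the companion Proposition~\ref{prop-level} on level modules, where Chase's theorem plays the analogous role.
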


The absolutely clean modules share many more of the homological properties enjoyed by injective modules over Noetherian rings and absolutely pure modules over coherent rings. In particular, using Theorem~\ref{them-cogen-by-a-set}, the set of (isomorphism representatives of) all modules of type $FP_{\infty}$ cogenerate a \emph{hereditary} complete cotorsion pair $(\class{C},\class{A})$. Moreover, there exists an infinite cardinal $\kappa$ such that each absolutely clean module is a transfinite extension of absolutely clean modules of cardinality less than $\kappa$. We refer the reader to~\cite[Props.~2.5/2.6]{bravo-gillespie-hovey}.

\subsection{Level modules and character module duality}

For a left (resp. right) $R$-module $N$, its character module is the right (resp. left) $R$-module $N^+ = \Hom_{\Z}(N,\Q)$. For Noetherian rings $R$, it is standard that $N$ is flat if and only if $N^+$ is injective and also that $N$ is injective if and only if $N^+$ is flat; see~\cite{enochs-jenda-book}. For a coherent ring $R$ it is known that $N$ is flat if and only if $N^+$ is absolutely pure and that $N$ is absolutely pure if and only if $N^+$ is flat; see~\cite{fieldhouse}. By analyzing the reasons that make this duality possible, we see that it in fact extends to non-coherent rings. But to do so, we need the following extension of the notion of flatness which, again, is based on the modules of type $FP_{\infty}$.

\begin{definition}
A left $R$-module $L$ is called \textbf{level} if $\Tor^R_1(F,L) = 0$ for all right $R$-modules $F$ of type $FP_{\infty}$.
\end{definition}

In the spirit of Proposition~\ref{prop-abs-clean}, there are interesting characterizations of coherent rings in terms of level modules.

\begin{proposition}\label{prop-level}
Let $R$ be any ring.  Then the level modules are closed under direct products.
Moreover, $R$ is (right) coherent if and only if the level (left) modules coincide with the flat modules.
\end{proposition}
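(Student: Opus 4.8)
The plan is to establish the product-closure by a $\Tor$-computation dual to the $\Ext$/direct-sum computation displayed just before Proposition~\ref{prop-abs-clean}, and then to deduce the coherence characterization from this closure property together with the standard colimit criterion for flatness and Chase's theorem.

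First I would handle direct products. Let $\{L_\alpha\}$ be a family of level left $R$-modules and let $F$ be a right $R$-module of type $FP_\infty$; by definition $F$ admits a projective resolution $P_* \xrightarrow{} F \xrightarrow{} 0$ in which every $P_i$ is a finitely generated projective right module. The key point is that $P_i \tensor_R -$ commutes with arbitrary direct products when $P_i$ is finitely generated projective: this is obvious for $P_i = R$, follows for $P_i = R^n$ because the sum is finite, and passes to direct summands since the comparison natural transformation $- \tensor_R \prod_\alpha L_\alpha \xrightarrow{} \prod_\alpha(- \tensor_R L_\alpha)$ is additive. Hence $P_* \tensor_R \prod_\alpha L_\alpha \xrightarrow{} \prod_\alpha (P_* \tensor_R L_\alpha)$ is an isomorphism of complexes, and, direct products being exact, passing to homology yields $\Tor^R_n(F, \prod_\alpha L_\alpha) \cong \prod_\alpha \Tor^R_n(F, L_\alpha)$ for every $n$. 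Taking $n = 1$, each factor on the right vanishes, so $\prod_\alpha L_\alpha$ is level. This is literally the dual of the computation in the text, with $\Hom$ and $\bigoplus$ replaced by $\tensor$ and $\prod$.

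Next I would prove the biconditional. Every flat left module is trivially level, so one inclusion holds over any ring; the content is that a level module must be flat exactly when $R$ is right coherent. Assume $R$ is right coherent. Then every finitely presented right $R$-module is of type $FP_\infty$ --- this is the right-handed form of the characterization underlying Proposition~\ref{prop-abs-clean}(1), namely that over a right coherent ring a finitely presented right module has a resolution by finitely generated projectives. So if $L$ is a level left module and $F$ is any finitely presented right module, then $\Tor^R_1(F, L) = 0$; since $\Tor^R_1(-, L)$ commutes with direct limits and every right module is a direct limit of finitely presented ones, $\Tor^R_1(-, L)$ vanishes on all right modules, i.e.\ $L$ is flat. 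For the converse, suppose the level left modules are precisely the flat left modules. By the first part the class of flat left modules is then closed under direct products, and by Chase's theorem this is equivalent to $R$ being right coherent.

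The argument is largely formal; the one input that is not simply a dualization of something already in the paper is the coherence dictionary --- on one side the fact that finitely presented right modules over a right coherent ring are of type $FP_\infty$, and on the other Chase's theorem equating right coherence with closure of the flat left modules under products. Once those are in hand, the rest is the bookkeeping sketched above, and I would not expect any serious obstacle.
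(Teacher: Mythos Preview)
Your proposal is correct and follows essentially the same argument as the paper: the same $\Tor$-computation for product closure (pulling the product through $P_*\otimes_R-$ because each $P_i$ is finitely generated projective), the same direct-limit argument for ``right coherent $\Rightarrow$ level $=$ flat,'' and the same appeal to Chase's theorem for the converse. You supply a bit more justification than the paper does (why $P_i\otimes_R-$ preserves products, and that products of modules are exact so homology commutes with $\prod$), but the strategy is identical.
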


\begin{proof}
Let $\{L_{\alpha}\}$ be a given collection of level (left) $R$-modules $L_{\alpha}$ and let $F$ be a (right) $R$-module of type $FP_{\infty}$. We let $P_* \rightarrow F \rightarrow 0$ denote a projective resolution of $F$ by finitely generated projectives. Then we compute:
$$\Tor^R_1(F, \prod L_{\alpha}) = H_1(P_* \otimes_R \prod L_{\alpha}) \cong H_1(\prod (P_* \otimes_R L_{\alpha}))$$ $$\cong \prod H_1 (P_* \otimes_R L_{\alpha}) = \prod \Tor^R_1(F, L_{\alpha}) = 0.$$ The product can only be pulled out of the tensor product because each $P_i$ is finitely generated projective. The computation shows that the class of all level modules is always closed under direct products.

Now flat modules are always level and the converse holds when $R$ is (right) coherent. Indeed in this case, the (right) $R$-modules of type $FP_{\infty}$ coincide with the finitely presented modules. So every (right) module $M$ is a direct limit, $M = \varinjlim F_{i}$, of $F_i$ of type $FP_{\infty}$. It follows that $\Tor^R_1(M,L) = 0$ for all (right) modules $M$ and level (left) modules $L$.

Finally, a famous theorem of Chase states that a ring is (right) coherent if and only if the (left) flat modules are closed under direct products. So if the level (left) modules coincide with the flat modules, then from what we have already shown, they are closed under products.
\end{proof}

Many other nice properties of level modules, including the fact that they are always the left half of a complete hereditary cotorsion pair, can be found in~\cite[Section~2]{bravo-gillespie-hovey}.
In short, just as we think of the absolutely clean modules as those modules possessing the same homological properties as injective modules over Noetherian rings, we think of the level modules as the modules possessing the same properties as flat modules over coherent rings. Moreover, the following is proved in~\cite[Theorem~2.10]{bravo-gillespie-hovey}.

\begin{theorem}\label{them-character duality}
Let $R$ be a ring. A left (resp. right) $R$-module $L$ is level if and only if the right (resp. left) $R$-module $L^+$ is absolutely clean, and, a left (resp. right) $R$-module $A$ is absolutely clean if and only if $A^+$ is level.
\end{theorem}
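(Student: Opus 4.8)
The plan is to deduce both equivalences from standard natural isomorphisms relating $\Tor$ and $\Ext$ through the character module functor $(-)^+ = \Hom_{\Z}(-,\Q)$, together with the two properties of this functor that make it a perfect probe: it is exact, since $\Q$ is an injective abelian group, and it is faithful, since $\Q$ is a cogenerator of abelian groups; in particular an abelian group $M$ vanishes if and only if $M^+$ does. Since passing to the opposite ring $R^{\mathrm{op}}$ interchanges left and right $R$-modules while preserving the classes of modules of type $FP_\infty$, of level modules, and of absolutely clean modules, it suffices to prove the versions in which $L$, respectively $A$, is a left $R$-module.

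For the first equivalence I would use the isomorphism
\[
\Tor^R_n(F,L)^+ \;\cong\; \Ext^n_R(F,L^+),
\]
natural in the right $R$-module $F$, for $L$ a left $R$-module. It is obtained by taking a projective resolution of $F$ in the category of right $R$-modules, applying the adjunction isomorphism $\Hom_{\Z}(F\otimes_R L,\Q)\cong\Hom_R(F,L^+)$ degreewise, and using exactness of $(-)^+$ to pass it through homology. Specializing to $n=1$, letting $F$ run over the right $R$-modules of type $FP_\infty$, and invoking faithfulness of $(-)^+$, we get: $\Tor^R_1(F,L)=0$ for all such $F$ if and only if $\Ext^1_R(F,L^+)=0$ for all such $F$. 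The left-hand condition says $L$ is level; the right-hand condition says the right $R$-module $L^+$ is absolutely clean. This proves the first assertion.

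For the second equivalence I would first establish, for a left $R$-module $A$ and a \emph{finitely generated projective} left $R$-module $P$, the natural isomorphism
\[
A^+\otimes_R P \;\xrightarrow{\ \sim\ }\; \Hom_R(P,A)^+, \qquad \varphi\otimes p \;\longmapsto\; \bigl(f\mapsto \varphi(f(p))\bigr).
\]
This map is $R$-balanced, it is an isomorphism when $P=R$, both sides are additive in $P$ and functorial in $P$, so it is an isomorphism for every finitely generated projective $P$. Now take a left $R$-module $F$ of type $FP_\infty$ and a projective resolution $P_\bullet\to F$ by finitely generated projectives; applying the displayed isomorphism degreewise gives a chain isomorphism $A^+\otimes_R P_\bullet \cong \Hom_R(P_\bullet,A)^+$, and passing to homology (again using exactness of $(-)^+$) yields $\Tor^R_n(A^+,F)\cong\Ext^n_R(F,A)^+$. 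With $n=1$, and using faithfulness of $(-)^+$ once more, $A$ is absolutely clean, i.e.\ $\Ext^1_R(F,A)=0$ for every left $R$-module $F$ of type $FP_\infty$, exactly when $\Tor^R_1(A^+,F)=0$ for every such $F$, i.e.\ exactly when the right $R$-module $A^+$ is level.

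The verifications that the two displayed maps are well defined and functorial are routine. The one point deserving care — and the structural reason the two halves of the theorem look slightly different — is that the isomorphism $\Tor^R_n(A^+,F)\cong\Ext^n_R(F,A)^+$ genuinely requires $F$ to possess a resolution by finitely generated projectives, since only for finitely generated projective $P$ is the tensor–hom comparison map $A^+\otimes_R P\to\Hom_R(P,A)^+$ an isomorphism; the isomorphism $\Tor^R_n(F,L)^+\cong\Ext^n_R(F,L^+)$ used in the first half, by contrast, is the unrestricted hom–tensor adjunction and imposes no finiteness on $F$. This is exactly where the hypothesis ``of type $FP_\infty$'' enters, and it is what singles out level and absolutely clean as the correct mutually dual classes.
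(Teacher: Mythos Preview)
Your argument is correct and is precisely the standard route to such character-module dualities: the unrestricted adjunction isomorphism $\Tor^R_1(F,L)^+\cong\Ext^1_R(F,L^+)$ handles the first equivalence, and the finitely-generated-projective comparison $A^+\otimes_R P\cong\Hom_R(P,A)^+$, extended along an $FP_\infty$-resolution, handles the second; your closing remark pinpointing exactly where the $FP_\infty$ hypothesis is consumed is the heart of the matter. Note, however, that the present paper does not actually supply a proof of this theorem---it merely states the result and refers to~\cite[Theorem~2.10]{bravo-gillespie-hovey}---so there is no in-paper argument to compare against; your proof is the expected one and would serve as a self-contained justification here.
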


\subsection{Gorenstein AC-injective and Gorenstein AC-projective modules}

We now return to the original goal of this section: Describing a generalization of the stable module category St($R$) of a quasi-Frobenius ring $R$. The generalization to Gorenstein rings, described in Section~\ref{section-stable module cat}, depends on the notion of Gorenstein injective and Gorenstein projective modules. An $R$-module $M$ is called \textbf{Gorenstein injective} if there exists an exact complex of injectives $$\cdots \xrightarrow{} I_1 \xrightarrow{} I_0 \xrightarrow{} I^0 \xrightarrow{} I^1 \xrightarrow{} \cdots$$ with $M = \ker{(I^0 \xrightarrow{} I^1)}$, which remains exact after applying $\text{Hom}_{R}(J,-)$ for any injective module $J$. Dualizing, one obtains the definition of \textbf{Gorenstein projective} modules. From the abelian model category point of view, generalizing the stable module category of a ring boils down to the question of when we have Hovey triples $\mathcal{S}^{inj} = (All, \class{W}_{\textnormal{inj}}, Goren \ injectives)$ and $\mathcal{S}^{proj} = (Goren \ projectives,\class{W}_{\textnormal{proj}}, All)$ in $R$-Mod. The answers are not known in full generality but it is shown in~\cite{bravo-gillespie-hovey} that this does hold for the following refinement of the Gorenstein injective and Gorenstein projective modules.

\begin{definition}\label{def-AC}
Let $R$ be any ring and $M$ an $R$-module.
\begin{enumerate}
\item $M$ is called \textbf{Gorenstein AC-injective} if there exists an exact complex of injectives $$\cdots \xrightarrow{} I_1 \xrightarrow{} I_0 \xrightarrow{} I^0 \xrightarrow{} I^1 \xrightarrow{} \cdots$$ with $M = \ker{(I^0 \xrightarrow{} I^1)}$, which remains exact after applying $\text{Hom}_{R}(A,-)$ for any absolutely clean module $A$.
\item $M$ is called \textbf{Gorenstein AC-projective} if there exists an exact complex of projectives $$\cdots \xrightarrow{} P_1 \xrightarrow{} P_0 \xrightarrow{} P^0 \xrightarrow{} P^1 \xrightarrow{} \cdots$$ with $M = \ker{(P^0 \xrightarrow{} P^1)}$, which remains exact after applying $\text{Hom}_{R}(-,L)$ for any level module $L$.
\end{enumerate}
\end{definition}

We have injective $\implies$ Gorenstein AC-injective $\implies$ Gorenstein injective, and similar for the projectives.
For Noetherian rings, the Gorenstein AC-injective modules coincide with the usual Gorenstein injectives. The Gorenstein AC-projective modules are a bit more subtle. They do coincide with the usual Gorenstein projectives whenever $R$ is Noetherian and has a dualizing complex. But it is not because the ring is Noetherian; it is because the level modules, which in this case are the flat modules, each have finite projective dimension. It is the existence of a dualizing complex which forces this~\cite{jorgensen-finite flat dimension}. Indeed the Gorenstein AC-projectives coincide with the traditional Gorenstein projectives for \emph{any} ring in which all level modules have finite projective dimension. Finally, when $R$ is coherent, the Gorenstein AC-injective (resp. Gorenstein AC-projective) modules coincide exactly with the Ding injective (resp. Ding projective) modules of~\cite{gillespie-Ding-Chen rings,Ding projective}. A main result of~\cite{bravo-gillespie-hovey} is the following, which appears there as Theorem~5.5/Proposition~5.10 and Theorem~8.5/Proposition~8.10.

\begin{theorem}[\cite{bravo-gillespie-hovey}]\label{them-Hovey-Bravo paper}
Let $R$ be any ring. Denote the class of Gorenstein AC-injective modules by $\class{GI}$ and the class of Gorenstein AC-projective modules by $\class{GP}$.
\begin{enumerate}
\item $\mathcal{S}^{inj} = (All, \class{W}_{\textnormal{inj}},\class{GI})$ is a hereditary Hovey triple, where $\class{W}_{\textnormal{inj}} = \leftperp{\class{GI}}$.
\item $\mathcal{S}^{proj} = (\class{GP},\class{W}_{\textnormal{proj}}, All)$ is a hereditary Hovey triple, where $\class{W}_{\textnormal{proj}} = \rightperp{\class{GP}}$.
\end{enumerate}
Each is cogenerated by a set. This means the corresponding triangulated homotopy categories are well generated in the sense of Neeman~\cite{neeman-book}.
\end{theorem}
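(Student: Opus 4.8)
The plan is to obtain both model structures from Theorem~\ref{them-how to construct triples}. For~(1) I would exhibit two complete hereditary cotorsion pairs in $R$-Mod with a common core: the \emph{categorical} pair $(All,\class{I})$ ($\class{I}$ the injective modules; it is complete since $R$-Mod has enough injectives and hereditary since $\Ext^{\ge 2}_R(-,\class{I})=0$), and the \emph{Gorenstein AC-injective cotorsion pair} $(\leftperp{\class{GI}},\class{GI})$. Granting these, Theorem~\ref{them-how to construct triples} produces a \emph{unique} thick class $\class{W}$ making $(All,\class{W},\class{GI})$ a Hovey triple, and its two associated cotorsion pairs are exactly the two input pairs; reading off the second one, $(\class{W},\class{GI})=(\leftperp{\class{GI}},\class{GI})$, gives $\class{W}=\leftperp{\class{GI}}=\class{W}_{\textnormal{inj}}$. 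The resulting model structure is hereditary because both cotorsion pairs are. Part~(2) dualizes, with $(\class{P},All)$ ($\class{P}$ the projectives) and $(\class{GP},\rightperp{\class{GP}})$ replacing these pairs.

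The real content is the Gorenstein AC-injective cotorsion pair. One first checks that $\class{GI}$ is closed under cokernels of monomorphisms between its members --- a routine splicing of the defining complete resolutions of Definition~\ref{def-AC} --- which is precisely the content of this pair being hereditary, once it is known to be a cotorsion pair. The step I expect to be the main obstacle is proving that $\class{GI}=\rightperp{\class{S}}$ for a \emph{set} $\class{S}$: one feeds the cardinal bound of Section~\ref{sec-absclean} (every absolutely clean module is a transfinite extension of absolutely clean modules below a fixed cardinal $\kappa$) into a transfinite-filtration / Hill-lemma argument to build a set $\class{S}$ of ``small'' modules with $\rightperp{\class{S}}=\class{GI}$; then adjoining a generator and applying Theorem~\ref{them-cogen-by-a-set} yields both the completeness of $(\leftperp{\class{GI}},\class{GI})$ and the final clause that it is cogenerated by a set. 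All the delicate cardinality bookkeeping is concentrated here, and the dual statement for $\class{GP}$ needs the analogous accessibility bound for level modules (see~\cite[Section~2]{bravo-gillespie-hovey}). One also observes that any injective module lies in $\class{GI}$ via the trivial complete resolution $\cdots 0\xrightarrow{}I\xrightarrow{1_I}I\xrightarrow{}0\cdots$, so $\class{I}\subseteq\class{GI}$ as Theorem~\ref{them-how to construct triples} requires.

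It then remains to match the cores: $\leftperp{\class{GI}}\cap\class{GI}=\class{I}$. For $\class{I}\subseteq\leftperp{\class{GI}}$, note an injective module $E$ is in particular absolutely clean (as $\Ext^1_R(-,E)=0$), so for every $G\in\class{GI}$ the defining complete resolution of $G$ remains exact after applying $\Hom_R(E,-)$; its right half being an honest injective resolution of $G$, we get $\Ext^{\ge 1}_R(E,G)=0$, so $E\in\leftperp{\class{GI}}$. Conversely, if $X\in\leftperp{\class{GI}}\cap\class{GI}$, then reindexing the defining complete resolution of $X$ gives a short exact sequence $0\xrightarrow{}X'\xrightarrow{}I_0\xrightarrow{}X\xrightarrow{}0$ with $I_0$ injective and $X'\in\class{GI}$; since $X\in\leftperp{\class{GI}}$ and $X'\in\class{GI}$ we have $\Ext^1_R(X,X')=0$, the sequence splits, and $X$ is a summand of $I_0$, hence injective. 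This completes~(1), and~(2) dualizes verbatim: projectives are Gorenstein AC-projective, a projective module is in particular level, $\class{GP}$ is closed under kernels of epimorphisms between its members, and $\rightperp{\class{GP}}\cap\class{GP}=\class{P}$ is proved using the ``upward'' half of the defining complete resolutions together with $\Ext^1_R(X^1,X)=0$ for $X\in\rightperp{\class{GP}}$ and $X^1\in\class{GP}$.

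For the final assertion, observe that in each model structure both associated cotorsion pairs are cogenerated by a set --- the categorical ones trivially (Baer's criterion furnishes a cogenerating set for $(All,\class{I})$, and $\{R\}$ works for $(\class{P},All)$), the Gorenstein ones by the filtration step above --- so by Section~\ref{sec-cotorsion pairs} these abelian model structures are cofibrantly generated. Being hereditary, their homotopy categories are triangulated by Theorem~\ref{them-hocat-triangulated}, and a cofibrantly generated hereditary abelian model structure has a homotopy category that is well generated in the sense of Neeman~\cite{neeman-book}; this last implication is standard (see~\cite{stovicek-purity}).
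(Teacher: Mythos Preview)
Your overall scaffolding is sound and matches how the result is actually organized: one reduces to showing that $(\leftperp{\class{GI}},\class{GI})$ and $(\class{GP},\rightperp{\class{GP}})$ are complete hereditary cotorsion pairs cogenerated by a set, with cores $\class{I}$ and $\class{P}$ respectively; from there Theorem~\ref{them-how to construct triples} (or a direct check of injectivity/projectivity of the cotorsion pair) yields the Hovey triples. Your core-matching arguments are correct.

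The gap is in part~(2). You locate the difficulty in an ``analogous accessibility bound for level modules'', but level modules occur \emph{contravariantly} in Definition~\ref{def-AC}: the condition is that $\Hom_R(P,L)$ stays exact for every level $L$. Deconstructing the class of level modules does not help with a contravariant Hom condition---knowing $\Hom_R(P,L_\alpha)$ exact for small $L_\alpha$ does not propagate along transfinite extensions in the second variable, so this route does not produce a cogenerating set for $(\class{GP},\rightperp{\class{GP}})$. The paper is explicit that ``dealing directly with the $\Hom_R(-,L)$ condition seems hopeless''. The actual mechanism is Theorem~\ref{them-character duality}: for a complex $C$ of projectives, $\Hom_R(C,L)$ is exact for all level $L$ if and only if $A\tensor_R C$ is exact for all absolutely clean $A$. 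Now absolute cleanness sits \emph{covariantly}, and the deconstructibility of absolutely clean modules (your Section~\ref{sec-absclean} input) reduces this to a single test module $A'$. Only then does one run the Hill-type argument on firmly acyclic complexes of projectives to obtain the set cogenerating $(\class{GP},\class{W}_{\textnormal{proj}})$. Without this conversion from $\Hom$ to $\tensor$, the filtration argument you sketch does not go through.

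A smaller point: in part~(1) you also invoke the absolutely clean deconstructibility, but note that the actual set you need must cogenerate $(\leftperp{\class{GI}},\class{GI})$, i.e.\ you need to deconstruct the \emph{left} class (or the corresponding class of complexes), not $\class{A}$ itself; the role of the absolutely clean bound there is more indirect than your sentence suggests, though the injective case is genuinely easier than the projective one.
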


Note that $\textnormal{Ho}(\mathcal{S}^{inj}) \cong \class{GI}/\sim\ $,
where $f \sim g$ if and only if $g-f$ factors through an injective module. Similarly, $\textnormal{Ho}(\mathcal{S}^{proj}) \cong \class{GP}/\sim\ $, where $f \sim g$ if and only if $g-f$ factors through a projective module. We call $\textnormal{Ho}(\mathcal{S}^{inj})$ the \textbf{injective stable module category} of $R$ and $\textnormal{Ho}(\mathcal{S}^{proj})$ the \textbf{projective stable module category} of $R$. The canonical functor $\gamma : R\textnormal{-Mod} \xrightarrow{} \textnormal{Ho}(\mathcal{S}^{inj})$ preserves coproducts, and takes all projective and absolutely clean modules to zero. It is also exact in the sense that it takes short exact sequences to exact triangles. Similarly, there is a canonical product-preserving functor $R\textnormal{-Mod} \xrightarrow{} \textnormal{Ho}(\mathcal{S}^{proj})$ which takes all injective and level modules to zero. If $R$ is a Gorenstein ring, then  $\class{W}_{\textnormal{inj}} = \class{W} = \class{W}_{\textnormal{proj}}$ is exactly the class of all modules of finite injective dimension, equivalently, of finite projective dimension.

A main difficultly in proving Theorem~\ref{them-Hovey-Bravo paper} is finding a set of modules $\class{S}$, which cogenerates  the cotorsion pair $(\class{GP}, \class{W}_{\textnormal{proj}})$. The idea is to show that any \emph{firmly acyclic} complex of projectives is ``built up from'' (technically, a \emph{transfinite extension} of) firmly acyclic complexes belonging to some fixed set, not a proper class. By a \textbf{firmly acyclic complex of projectives} we mean one that appears in the above Definition~\ref{def-AC}. That is, an exact complex of projectives $\cdots \xrightarrow{} P_1 \xrightarrow{} P_0 \xrightarrow{} P^0 \xrightarrow{} P^1 \xrightarrow{} \cdots$ which remains exact after applying $\text{Hom}_{R}(-,L)$ for all level $L$. Dealing directly with the  $\text{Hom}_{R}(-,L)$ condition seems hopeless. The problem becomes doable, although still quite technical, after the following simplification. Its proof is based on Theorem~\ref{them-character duality} and is a generalization of the result proved by Murfet and Salarian in the Noetherian case.

\begin{theorem}[\cite{murfet-salarian},\cite{bravo-gillespie-hovey}] Let $C$ be a chain complex of projective modules. Then $\text{Hom}_{R}(C,L)$ is exact for any level (left) $R$-module $L$ if and only if $A \tensor_R C$ is exact for any absolutely clean (right) $R$-module $A$.
\end{theorem}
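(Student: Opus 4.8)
The plan is to reduce everything to the character-module (Pontryagin) duality of Theorem~\ref{them-character duality} by way of the tensor--hom adjunction. For any right $R$-module $A$ and any chain complex $C$ of left $R$-modules there is a natural isomorphism of complexes of abelian groups $\Hom_\Z(A\tensor_R C,\Q)\cong\Hom_R\bigl(C,\Hom_\Z(A,\Q)\bigr)$; writing $(-)^+=\Hom_\Z(-,\Q)$ this is $(A\tensor_R C)^+\cong\Hom_R(C,A^+)$. Since $\Q$ is an injective cogenerator of abelian groups, $(-)^+$ is exact and faithful, so a complex $D$ of abelian groups is exact if and only if $D^+$ is exact. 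The forward implication is then immediate: if $\Hom_R(C,L)$ is exact for every level left module $L$ and $A$ is an absolutely clean right module, then $A^+$ is level by Theorem~\ref{them-character duality}, so $\Hom_R(C,A^+)\cong(A\tensor_R C)^+$ is exact, and hence $A\tensor_R C$ is exact.

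For the converse, assume $A\tensor_R C$ is exact for every absolutely clean right module $A$, and let $L$ be a level left module. Since $L^+$ is absolutely clean, $L^+\tensor_R C$ is exact, and therefore $\Hom_R(C,L^{++})\cong(L^+\tensor_R C)^+$ is exact. It remains to descend from the double dual $L^{++}$ to $L$. The canonical map $L\to L^{++}$ is a pure monomorphism whose cokernel $Q$ is again level: the pure $\Tor$-sequence shows $\Tor^R_1(F,Q)$ is a quotient of $\Tor^R_1(F,L^{++})=0$ for every $F$ of type $FP_{\infty}$, because $L^{++}=(L^+)^+$ is level. As $C$ is degreewise projective, $\Hom_R(C,-)$ carries $0\to L\to L^{++}\to Q\to 0$ to a short exact sequence of complexes, and its homology long exact sequence yields $H_n\Hom_R(C,L)\cong H_{n+1}\Hom_R(C,Q)$.

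The main obstacle is precisely this descent, which is circular on its face since $Q$ is again level. The way through is to bootstrap using the structure theory of level modules developed in~\cite{bravo-gillespie-hovey}: the level modules form the left-hand class of a complete hereditary cotorsion pair that is cogenerated by a set, and they are closed under pure subobjects and pure quotients. A Hill-lemma / transfinite-extension argument then shows that the class of level modules $L$ for which $\Hom_R(C,L)$ is exact --- which is closed under extensions and direct summands and, by the displayed isomorphism together with the standing hypothesis, contains every character module $A^+$ of an absolutely clean $A$, in particular every $L^{++}$ --- is in fact all of the level modules. Equivalently, one may isolate the single lemma that for a complex $C$ of projectives, $\Hom_R(C,L)$ is exact if and only if $\Hom_R(C,L^{++})$ is exact, whose proof amounts to the same purity bookkeeping; granting it, the converse follows at once from the previous paragraph. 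This is the step that generalizes the Noetherian computation of Murfet and Salarian, with the pair ``injective/flat'' replaced by ``absolutely clean/level'' throughout.
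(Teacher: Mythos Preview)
Your forward implication is clean and correct: character duality plus tensor--hom adjunction give it immediately, exactly as the paper indicates when it says the proof ``is based on Theorem~\ref{them-character duality}.''

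The converse, however, has a real gap. You correctly reach exactness of $\Hom_R(C,L^{++})$ and correctly diagnose that the descent to $L$ is the whole difficulty. But neither of your two proposed fixes is actually carried out, and neither is obviously valid as stated. For the transfinite--extension route, knowing that your class $\class{L}_0=\{L\text{ level}:\Hom_R(C,L)\text{ exact}\}$ contains every $L^{++}$ and is closed under extensions, products, and summands does \emph{not} let a Hill--lemma argument produce all level modules: $L$ sits inside $L^{++}$ as a pure \emph{sub}module, not as something built from $L^{++}$ by extensions, and you never establish that $\class{L}_0$ is closed under pure submodules. Your long exact sequence only trades $H_n\Hom_R(C,L)$ for $H_{n+1}\Hom_R(C,Q)$ with $Q$ again level, which is exactly the circularity you flagged; invoking ``purity bookkeeping'' does not break it. The alternative ``single lemma'' --- that $\Hom_R(C,L)$ is exact iff $\Hom_R(C,L^{++})$ is --- is precisely the missing content, and you have not proved it; saying it ``amounts to the same'' argument is not a proof.

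The survey does not spell out the argument either, deferring to \cite{murfet-salarian,bravo-gillespie-hovey}. What makes the descent go through there is not a bare closure-under-pure-submodules claim for $\class{L}_0$, but an honest computation that exploits the projectivity of the $C_n$ together with the tensor characterisation on the other side (for finitely generated projective $P$ one has $\Hom_R(P,L)^+\cong L^+\otimes_R P$, and the general case is handled by a careful reduction). If you want to complete your write-up, you should either supply a direct proof that $\Hom_R(C,-)$ reflects exactness along the pure embedding $L\hookrightarrow L^{++}$ for complexes of projectives, or consult the cited sources and reproduce their argument; the hand-wave to Hill's lemma does not do the job.
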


As pointed out in Section~\ref{sec-absclean}, there is a set of absolutely clean modules from which all others are ``built up''. It implies that there is a single absolutely clean modules $A'$, for which a complex $C$ of projectives is firmly acyclic if and only if $A' \tensor_R C$ remains exact. This is the key to cogenerating $(\class{GP}, \class{W}_{\textnormal{proj}})$ by a set, proving it is a complete cotorsion pair.


\providecommand{\bysame}{\leavevmode\hbox to3em{\hrulefill}\thinspace}
\providecommand{\MR}{\relax\ifhmode\unskip\space\fi MR }
\providecommand{\MRhref}[2]{%
  \href{http://www.ams.org/mathscinet-getitem?mr=#1}{#2}
}
\providecommand{\href}[2]{#2}

\end{document}